\documentclass[a4paper,10pt]{article}

\usepackage{latexsym}
\usepackage{mathrsfs}
\usepackage{amsfonts,amsmath,amssymb}
\usepackage{indentfirst}
\usepackage{subeqnarray}
\usepackage{verbatim}
\usepackage[pdftex]{graphicx}
\usepackage{epstopdf}
\usepackage{stmaryrd}
\usepackage{multirow}
\usepackage{diagbox}
\usepackage{subcaption}
\usepackage{pdfpages}
\usepackage{algorithm}
\usepackage{algorithmic}
\usepackage[title]{appendix}
\usepackage{bm}

\newcommand{\ud}{\,\mathrm{d}}
\newcommand{\bx}{\bm{x}}
\newcommand{\bX}{\bm{X}}
\newcommand{\bY}{\bm{Y}}

\newcommand{\bzeta}{\bm{\zeta}}
\newcommand{\bpsi}{\bm{\psi}}
\newcommand{\bphi}{\bm{\phi}}

\newcommand{\ba}{\bm{a}}
\newcommand{\bA}{\bm{A}}
\newcommand{\bb}{\bm{b}}

\newcommand{\bF}{\bm{F}}

\newcommand{\bG}{\bm{G}}
\newcommand{\bh}{\bm{h}}
\newcommand{\bH}{\bm{H}}

\newcommand{\br}{\bm{r}}

\newcommand{\bv}{\bm{v}}

\newcommand{\bW}{\bm{W}}

\newcommand{\bp}{\bm{p}}

\newcommand{\cB}{\mathcal{B}}
\newcommand{\cU}{\mathcal{U}}

\newcommand{\cG}{\mathcal{G}}
\newcommand{\cL}{\mathcal{L}}
\newcommand{\cN}{\mathcal{N}}

\newcommand{\balpha}{\mbox{\boldmath{$\alpha$}}}

\newcommand{\Real}{\mathbb{R}}

\newtheorem{theorem}{Theorem}[section]

\newtheorem{assumption}[theorem]{Assumption}

\newtheorem{example}{Example}

\newtheorem{remark}[theorem]{Remark}
\numberwithin{equation}{section}

\DeclareMathAlphabet\mathbfcal{OMS}{cmsy}{b}{n}

\newenvironment{proof}[1][Proof]{\textbf{#1.} }
{\ \rule{0.75em}{0.75em}\smallskip}

\usepackage[pdftex,unicode,colorlinks,linkcolor=blue,citecolor=red]{hyperref}

\begin{document}

\begin{center}
\Large\bf \textbf{Adaptive-Growth Randomized Neural Networks for Level-Set Computation of Multivalued Nonlinear First-Order PDEs with Hyperbolic Characteristics}
\end{center}

\begin{center}
    {\large\sc Haoning Dang}\footnote{School of Mathematics and Statistics, Xi'an Jiaotong University, Xi'an, Shaanxi 710049, China. E-mail: {\tt haoningdang.xjtu@stu.xjtu.edu.cn}},\quad
    {\large\sc Shi Jin}\footnote{School of Mathematical Sciences \& Institute of Natural Sciences, Shanghai Jiao Tong University, Shanghai 200240, China. Email: {\tt shijin-m@sjtu.edu.cn}},\quad
    {\large\sc Fei Wang}\footnote{School of Mathematics and Statistics \& State Key Laboratory of Multiphase Flow in Power Engineering, Xi'an Jiaotong University, Xi'an, Shaanxi 710049, China. Email: {\tt feiwang.xjtu@xjtu.edu.cn}}

\end{center}

\medskip
\begin{quote}
{\bf Abstract.} This paper proposes an Adaptive-Growth Randomized Neural Network (AG-RaNN) method for computing multivalued solutions of nonlinear first-order PDEs with hyperbolic characteristics, including quasilinear hyperbolic balance laws and Hamilton--Jacobi equations. Such solutions arise in geometric optics, seismic waves, semiclassical limit of quantum dynamics and high frequency limit of linear waves, and differ markedly from the viscosity or entropic solutions. The main computational challenges lie in that the solutions are no longer functions, and become union of multiple branches, after the formation of singularities.   Level-set formulations offer a systematic alternative by embedding the nonlinear dynamics into linear transport equations posed in an augmented phase space, at the price of substantially increased dimensionality. To alleviate this computational burden, we combine AG-RaNN with an adaptive collocation strategy that concentrates samples in a tubular neighborhood of the zero level set, together with a layer-growth mechanism that progressively enriches the randomized feature space. Under standard regularity assumptions on the transport field and the characteristic flow, we establish a convergence result for the AG-RaNN approximation of the level-set equations. Numerical experiments demonstrate that the proposed method can efficiently recover multivalued structures and resolve nonsmooth features in high-dimensional settings.

\end{quote}

{\bf Keywords.} Randomized neural networks, adaptive growth, first-order PDEs, level-set methods, multivalued solutions
\medskip

{\bf Mathematics Subject Classification.} 68T07, 68W25, 65M22, 65N22

\section{Introduction}

Many nonlinear partial differential equations may develop singularities in finite time, even for smooth initial data. Classical examples include the Hamilton--Jacobi equations and hyperbolic conservation laws. In the former case, derivatives of the solution can become discontinuous, leading to the formation of caustics, whereas in the latter case the solution itself may become discontinuous, giving rise to shock waves. After singularities form, solutions must be interpreted in a weak or generalized sense; without additional admissibility conditions, uniqueness typically fails. Therefore, entropy conditions for conservation laws and viscosity-solution theory for the Hamilton--Jacobi equations are introduced to identify physically relevant solutions (\cite{lax1973, crandall1983}).

There are important physical applications in which viscosity or entropy solutions are not physically relevant, and the appropriate description instead involves multivalued solutions. Representative examples include geometric optics, the semiclassical limit of the Schr\"odinger equation, seismic wave propagation, and the high-frequency limit of linear wave equations (\cite{Brenier84, ER1996, FS02, JinLi03, SpMaMa}). In these settings, the underlying dynamics is governed by Hamiltonian systems, where energy conservation holds, and the solution structure is naturally described in phase space. Multivalued solutions arise from the crossing of characteristics and can be interpreted as classical solutions of the governing equations by method of characteristics, when viewed in an augmented phase-space formulation (\cite{CourantH, SpMaMa}).

The computation of multivalued solutions is challenging, since such solutions are no longer single-valued functions, and new variables must be introduced to represent the multiple phases emerging during time evolution. Moreover, the total number of phases is generally unknown a priori (\cite{Brenier84, ER1996, JinLi03}). Deep neural networks have been explored to address these difficulties, for instance, by learning closure relations for moment systems associated with multivalued solutions in the semiclassical limit of the Schr\"odinger equation (\cite{JLLZ}).

Alternatively, the level-set methods introduced in \cite{Jin2003Levelset, Liu2006Levelset} provide a systematic framework for computing multivalued solutions in phase space. By introducing a level-set function in an augmented phase space, the original nonlinear evolution can be reformulated as a linear transport equation, whose zero level set represents the multivalued solution. Although solving such linear equations is conceptually simpler than directly tracking a nonlinear, time-varying number of phases, the dimensionality of the level-set formulation can be significantly higher—often can be  twice that of the original problem—leading to severe curse-of-dimensionality issues unless sophisticated localization strategies are employed (\cite{Peng1999Locallevelse}).

Deep neural networks have emerged as powerful tools for solving high-dimensional PDEs. Representative approaches include the deep Ritz method (\cite{E2018DRM}), physics-informed neural networks (\cite{Raissi2019Physics}), and related variants (\cite{Sirignano2018DGM, Xu2020FiniteNeuron, Zang2020WAN, Liao2021DNM}). The approximation properties of shallow neural networks are well understood (\cite{Cybenko1989Approximation, Barron1993Universal, Barron1994Approximation}), and have been further investigated in various contexts in recent years (\cite{E2022Representation, He2023expressivity, Liao2023Spectral, Ming2023BarronClass, Jiao2024deep, Jiao2024drm}). These methods typically rely on nonlinear, nonconvex loss functions optimized by stochastic gradient-based algorithms, which may suffer from slow convergence or suboptimal local minima, especially in high-dimensional settings. As a result, optimization errors may noticeably deteriorate the accuracy of the computed solutions.

To alleviate optimization difficulties, Randomized Neural Networks (RaNNs) provide an attractive alternative by fixing most nonlinear parameters and only training linear coefficients. Extreme Learning Machines (ELMs), as a special case of RaNNs with a single hidden layer, have been successfully applied to PDE problems. For example, local ELM methods (\cite{Dong2021Local, Dong2023Method}) couple subdomain solutions through interface collocation points, while random feature methods (\cite{Chen2022RFM, Chen2024RFM}) employ partition-of-unity frameworks to ensure global continuity. RaNNs have also been combined with classical numerical methods, including Petrov–Galerkin formulations (\cite{Shang2023Randomized, Shang2023RNNPG}), discontinuous Galerkin methods (\cite{Sun2022LRNNDG, Sun2023DVWE, Sun2024KDV}), hybridized discontinuous Petrov–Galerkin methods (\cite{Dang2024HDPG}), and finite difference schemes (\cite{Li2025LRNN}). To further improve computational efficiency, preconditioning strategies for the resulting least-squares systems have been developed in \cite{Shang2024DD}.

In this work, we employ the Adaptive-Growth RaNN (AG-RaNN) proposed in \cite{Dang2024AGRNN} to solve for multivalued solutions of quasi-linear hyperbolic balance laws and Hamilton–Jacobi equations via level-set formulations. By reformulating the original problem as a linear level-set equation in an augmented space–time domain, the resulting numerical method becomes a fully space–time approach, without time marching. Consequently, the computation reduces to solving a single linear least-squares problem. Although AG-RaNN fixes most parameters during training, existing approximation theory shows that it retains strong expressive power. Moreover, the resulting least-squares problem is linear and convex, leading to robust and efficient numerical solutions. Building on this framework, we further incorporate adaptive collocation near the zero level set and a layer-growth mechanism to improve efficiency and accuracy in the high-dimensional level-set setting.

The remainder of this paper is organized as follows. Section \ref{sec:levelset} reviews first-order PDEs and level-set formulations. Section \ref{sec:AGRANN} introduces the AG-RaNN method, including the layer-growth strategy and adaptive collocation point selection. Section \ref{sec:analysis} presents the error analysis of the RaNN method, covering approximation, statistical, and optimization errors. Section \ref{sec:experiments} describes the numerical procedure for locating the zero level set and demonstrates the accuracy and efficiency of the proposed method through numerical experiments. Finally, Section \ref{sec:summary} summarizes the main results of this paper.

\section{First-Order PDEs and Level-Set Method} \label{sec:levelset}

In this section, we review the level-set formulation for a class of nonlinear first-order partial differential equations. We consider the following general first-order partial differential equation:
\begin{align} \label{eq:order1st}
    \partial_tu+G(t,\bx,u,\nabla_{\bx}u)=0,
\end{align}
where $u(t,\bx):\Real\times\Real^d\to\Real$ is a scalar-valued function and $G(t,\bx,z,\bp):\Real\times\Real^d\times\Real\times\Real^d\to\Real$ is a given scalar (nonlinear) function.
Following \cite{Jin2003Levelset,Liu2006Levelset}, we introduce a level-set function $\phi$ such that
\begin{align} \label{eq:levelfun1}
    \phi(t,\bx,z,\bp)=0\quad\text{at}\quad z=u(t,\bx)\text{ and }\bp=\nabla_{\bx}u(t,\bx).
\end{align}
The level-set function $\phi$ satisfies the following linear transport equation:
\begin{align} \label{eq:leveleq1}
    \partial_t\phi+\nabla_{\bp}G\cdot\nabla_{\bx}\phi+(\bp\cdot\nabla_{\bp}G-G)\,\partial_z\phi-(\nabla_{\bx}G+\bp\,\partial_zG)\cdot\nabla_{\bp}\phi=0.
\end{align}
This equation can be written in the unified operator form
\begin{align}\label{eq:ls-operator}
  \cG\phi(\bX):=\partial_t\phi(\bX)+\ba(\bX)\cdot\nabla_{\bY}\phi(\bX) = 0,\qquad \bX=(t,\bY),
\end{align}
where \(\bY=(\bx,z,\bp)\) collects the physical and auxiliary variables, and \(\ba(\bX)\) denotes the associated characteristic velocity field. For simplicity, we restrict our analysis to a bounded computational domain $D=(0,T)\times\Omega=(0,T)\times\Omega_{\bx}\times I_z\times\Omega_{\bp}$, and $\Gamma\subset\partial D$ is the inflow boundary. For the theoretical analysis, we assume that the computational domain $\Omega$ is chosen sufficiently large so that characteristics issued from the initial-time boundary remain inside $D=(0,T)\times\Omega$ for all $t\in[0,T]$. This assumption is reasonable due to the finite propagation speed of the linear transport equation  \eqref{eq:leveleq1}. Equivalently, no characteristic enters $D$ through the lateral boundary of $\Omega$ over the time interval of interest. Under this assumption, the inflow boundary reduces to the initial-time boundary, i.e., $\Gamma=\{0\}\times\Omega$. In the numerical experiments, we adopt a sufficiently large computational domain to make this assumption plausible in practice.

Since the level-set formulation embeds the solution manifold into an augmented phase space of dimension $2d+1$, one needs to determine the intersection of $d+1$ level-set functions in order to recover the unique solution $u(t,\bx)$ of the original equation. According to different initial conditions, we can obtain all $d+1$ level-set functions  $\phi_i(t,\bx,z,\bp)$ satisfying the same level-set equation \eqref{eq:leveleq1}. Two representative constructions are presented in Subsections \ref{Sec:Onlygrad} and \ref{Sec:scalarhyper}. Within the space-time framework, we treat the inflow boundary condition as a Dirichlet boundary condition on the space-time domain and write
\begin{align} \label{eq:ls-bc}
    \cB\phi=\phi|_{\Gamma}=g,
\end{align}
where $g$ denotes the prescribed boundary data. In all cases considered in this work, \(\cB\) is the identity operator; it is introduced here solely to provide a unified notation that facilitates the subsequent analysis.

\subsection{Hamilton--Jacobi Equations: Level-Set Formulation for Gradient Fields}
\label{Sec:Onlygrad}

A special case arises when $G=G(t,\bx,\bp)$ is independent of $z$, i.e., the first-order equation does not depend on the solution $u$ explicitly. In this setting, the level-set function \eqref{eq:levelfun1} is defined as 
\begin{align} \label{eq:levelfun2}
    \phi(t,\bx,\bp)=0\quad\text{at}\quad \bp=\nabla_{\bx}u(t,\bx),
\end{align}
while the level-set equation \eqref{eq:leveleq1} reduces to
\begin{align} \label{eq:leveleq2}
    \partial_t\phi+\nabla_{\bp}G\cdot\nabla_{\bx}\phi-\nabla_{\bx}G\cdot\nabla_{\bp}\phi=0.
\end{align}
In this case, the zero level set of $\phi(t,\bx,\bp)$ characterizes the gradient field $\nabla_{\bx}u(t,\bx)$, rather than the scalar solution $u$ itself.

A representative example of this case is the Hamilton--Jacobi equation
\begin{align} \label{eq:HJ1}
    \partial_t S + H(\bx,\nabla_{\bx} S) &= 0, \quad(t,\bx)\in(0,\infty)\times\Real^d,\\
    S(0,\bx) &= S_0(\bx),\quad \bx\in\Real^d,
\end{align}
where $H(\bx,\nabla_{\bx}S):\Real^{d}\times \Real^{d}\to\Real$ denotes the Hamiltonian. 

If only the gradient $\nabla_{\bx}S$ is of interest, one can introduce $d$ level-set functions
\begin{align*}
\phi_i(t,\bx,\bp)=p_i-\partial_{x_i}S(t,\bx),\qquad i=1,\cdots,d,
\end{align*}
so that $\phi_i=0$ encodes $p_i=\partial_{x_i}S$. Each $\phi_i$ satisfies
\begin{align}\label{eq:HJ5}
    \partial_t\phi_i + \nabla_{\bp} H\cdot\nabla_{\bx}\phi_i-\nabla_{\bx}H\cdot\nabla_{\bp}\phi_i=0,
    \qquad i=1,\cdots,d,
\end{align}
with the initial conditions
\begin{align}
    \phi_i(0,\bx,\bp)=p_i-\partial_{x_i}S_0(\bx),\qquad i=1,\cdots,d.
\end{align}

If the scalar function $S$ itself is also required, one introduces $d+1$ level-set functions
\begin{align}
    \phi(t,\bx,z,\bp)=0\quad\text{at}\quad z=S\text{ and }\bp=\nabla_{\bx}S,
\end{align}
which satisfy the level-set equations
\begin{align} \label{eq:HJ2}
    \partial_t\phi_i + \nabla_{\bp} H\cdot\nabla_{\bx}\phi_i+(\bp\cdot\nabla_{\bp}H-H)\partial_z\phi_i-\nabla_{\bx}H\cdot\nabla_{\bp}\phi_i=0,\quad i=0,1,\cdots,d.
\end{align}
For the initial conditions, a direct choice is
\begin{align} 
    \phi_0(0,\bx,z,\bp)=z-S_0(\bx),\quad\phi_i(0,\bx,z,\bp)=p_i-\partial_{x_i}S_0(\bx),\quad i=1,\cdots,d.\label{eq:HJ3}
\end{align} 
Here the augmented phase space introduces the auxiliary variables $(z,\bp)$ (of dimension $d+1$), and the solution manifold is characterized as the intersection of the $d+1$ zero level sets associated with \eqref{eq:HJ3}.

In the semiclassical (WKB) limit of the Schr\"odinger equation, the phase function $u$ satisfies the eikonal equation, which is a Hamilton-Jacobi equation with Hamiltonian \(H(\bx,\bp)=\tfrac12|\bp|^2+V(\bx)\). In this setting, \(\nabla_{\bx}S\) represents the associated momentum field.

\subsection{Scalar Hyperbolic Balance Laws: Level-Set Formulation for the Solution}
\label{Sec:scalarhyper}

In the case where $G$ depends linearly on $\bp$, the first-order equation reduces to a scalar hyperbolic balance law
\begin{align} \label{eq:order1st3}
\partial_tu+G(t,\bx,u,\nabla_{\bx}u)=\partial_tu+\bA(t,\bx,u)\cdot\nabla_{\bx}u+B(t,\bx,u)=0.
\end{align}
In this setting, it suffices to introduce a single level-set function
\begin{align} \label{eq:levelfun3}
    \phi(t,\bx,z)=0\quad\text{at}\quad z=u(t,\bx),
\end{align}
which satisfies the reduced level-set equation
\begin{align} \label{eq:leveleq3}
    \partial_t\phi+\nabla_{\bp}G\cdot\nabla_{\bx}\phi+(\bp\cdot\nabla_{\bp}G-G)\partial_z\phi=0.
\end{align}
With \eqref{eq:order1st3}, the above equation is equivalent to
\begin{align} \label{eq:leveleq3e}
    \partial_t\phi+\bA(t,\bx,z)\cdot\nabla_{\bx}\phi-B(t,\bx,z)\partial_z\phi=0.
\end{align}

A special case is the following quasi-linear hyperbolic balance law. The solution $u(t,\bx)$ is a scalar function satisfying
\begin{align}
    \partial_t u + \bF(u)\cdot\nabla_{\bx}u + q(\bx,u) &= 0, \quad(t,\bx)\in(0,\infty)\times\Real^d,\label{eq:Hyper1}\\
    u(0,\bx) &= u_0(\bx),\quad \bx\in\Real^d,\label{eq:Hyper2}
\end{align}
where $\bF(u):\Real\to\Real^d$ is a vector and $q(\bx,u):\Real^d\times\Real\to\Real$ is a scalar source term. Equation \eqref{eq:Hyper1} is linearly dependent on the gradient of $u$, according to \eqref{eq:leveleq3e}, the level-set equation is
\begin{align}\label{eq:Hyper3}
    \partial_t\phi + \bF(z)\cdot\nabla_{\bx}\phi-q(\bx,z)\partial_z\phi=0.
\end{align}
For the initial condition, a direct choice is
\begin{align} \label{eq:Hyper4}
    \phi(0,\bx,z)=z-u_0(\bx).
\end{align}
However, for discontinuous $u_0$, this choice may be non-smooth and it can be replaced by a signed distance function to the interface $z=u_0(\bx)$. Since only one new dimension is introduced in \eqref{eq:Hyper3}, we only calculate the solution with the initial condition \eqref{eq:Hyper4}.

\section{Adaptive-Growth Randomized Neural Networks} \label{sec:AGRANN}

We use adaptive-growth randomized neural networks to solve nonlinear first-order PDEs via the level-set formulation, including Hamilton-Jacobi equations and quasi-linear hyperbolic balance laws, with the goal of computing multivalued solutions. To improve the expressive power of randomized features, we adopt the layer growth strategy proposed in \cite{Dang2024AGRNN}. For completeness, we briefly summarize the network representation and the layer-growth procedure, and then describe the adaptive selection of collocation points near the zero level set.

\subsection{AG-RaNN with the Layer Growth Strategy}

We briefly review the network structure introduced in \cite{Dang2024AGRNN}. Consider an AG-RaNN with $L$ hidden layers, hence $L+2$ layers in total when counting the input and output layers. Let $m_l$ denote the number of neurons in layer $l$ for $l=0,1,\cdots,L+1$, where $m_0$ is the input dimension and $m_{L+1}$ is the output dimension. The activation function in the $l$-th hidden layer is denoted by $\rho_l$ for $l=1,\cdots,L$.

Since the nonlinear parameters in randomized neural networks are fixed after initialization, the resulting approximation can be written as a linear combination of basis functions generated by the hidden layers.  Specifically, we define the hidden-layer basis vector $\bpsi_l(\bx)$ recursively by
\begin{align*}
    &\bpsi_1(\bx)=\rho_1(\bW_1\bx+\bb_1),\\
    &\bpsi_l(\bx)=[\bpsi_{l-1}(\bx);~\rho_l(\bW_l\bpsi_{l-1}(\bx)+\bb_l)\odot \bG_l(\bx)],\quad l=2,\cdots,L,
\end{align*}
where $\bW_1\in\Real^{m_1\times m_0}$, $\bW_l\in\Real^{m_l\times\sum_{i=1}^{l-1}m_i}~(l=2,\cdots,L)$ and $\bb_l\in\Real^{m_l\times1}~(l=1,\cdots,L)$ are weights and biases. The vector $\bpsi_l(\bx)$ contains $\sum_{i=1}^l m_i$ basis functions. The first-layer parameters $(\bW_1,\bb_1)$ can be determined by the frequency-based initialization and neuron-growth strategy in \cite{Dang2024AGRNN}, or sampled from a uniform distribution $\cU(-\br_1,\br_1)$. In what follows, we focus on constructing $\bW_l$ and $\bb_l$ for $l=2,\cdots,L$.

The layer-growth procedure for AG-RaNN is performed in an iterative manner. Suppose that an RaNN with $l-1$ hidden layers has been constructed, and let $\bpsi_{l-1}(\bx)$ denote the corresponding feature (basis) vector generated up to the $l-1$-st hidden layer. The goal of the next growth step is to construct the parameters $\bW_l$, $\bb_l$, as well as the localization function $\bG_l(\bx)$ for the $l$-th hidden layer.

Based on the current feature representation $\bpsi_{l-1}(\bx)$, we obtain the intermediate RaNN approximation $\phi_\rho^{l-1}(\bx) = \balpha_{l-1} \bpsi_{l-1}(\bx)$, where $\balpha_{l-1} \in \Real^{1 \times \sum_{i=1}^{l-1} m_i}$ denotes the vector of linear coefficients. Using this approximation, we select a set of error-indicator points $\bX_{\mathrm{err}}^l = (\bx_1^l, \cdots, \bx_{m_l}^l)$, which consists of the $m_l$ points associated with the largest values of a prescribed error estimator. The parameters of the new hidden layer are then defined by
\begin{align*}
    \bW_l=\bH_l\balpha_{l-1},\quad\bb_l=-\bH_l\odot[\phi_\rho^{l-1}(\bX_\text{err}^l)]^\top,
\end{align*}
together with the localization function
\begin{align*}
    \bG_l(\bx)=(G_l^1(\bx),\cdots,G_l^{m_l}(\bx))^\top,\quad G_l^j(\bx)=\exp\left(-\frac{\left\|\eta^l_2\bh^l_j\odot(\bx-\bx^l_j)^\top\right\|_2^2}{2}\right).
\end{align*}
Here the scaling vector $\bH_l$ is given by
\begin{align*}
    \bH_l=\sqrt{(\bH_l^0\odot\bH_l^0)\mathbf{1}_{m_0\times1}},\quad\bH_l^0=\eta_1^l\nabla\phi_\rho^{l-1}(\bX_\text{err}^{l\top})\text{ or }\bH_l^0=(\bh_1^{l\top},\cdots,\bh_{m_l}^{l\top})^\top\in\Real^{m_l\times m_0},\quad \bh_j^l\sim\cU(-\br_l,\br_l).
\end{align*}

\begin{remark}
Several parameters appear in the above construction, including $\br_l$, $\eta_1^l$, and $\eta_2^l$ ($l=2,\cdots,L$), which must be specified in advance. In all numerical experiments reported in this paper, we sample $\bh_j^l \sim \mathcal{U}(-\br_l, \br_l)$ and set $\eta_2^l = 15$, so that $\br_l$ is the only parameter that requires tuning. 
\end{remark}

\subsection{Adaptive Collocation Points} \label{Sec:AdaptPoint}

Before layer growth, we first compute a coarse RaNN solution $\phi_\rho^{1}(t,\bY)$, where $t$ denotes time and $\bY$ collects the remaining variables in the level-set formulation. Since our goal is to accurately capture the zero level set of $\phi(t,\bY)$, it is sufficient (and more efficient) to sample collocation points in a neighborhood of $\{\phi=0\}$. Inspired by the local level-set method (\cite{Peng1999Locallevelse}), we therefore adopt an adaptive collocation strategy that updates sampling points based on the coarse solution $\phi_\rho^{1}$.

The level-set equation is posed on an unbounded domain, and uniform sampling over a large truncated region quickly becomes inefficient. To initialize the procedure, we sample the variables $\bY$ from a normal distribution and the time variable $t$ uniformly over a bounded interval, thereby defining an initial computational domain (illustrated by the red rectangle in Figure \ref{fig:CollocationPoints}). Several parameters are involved in this initialization, including the mean, variance, and the number of samples. These parameters are used only for initialization and are not heavily tuned; they are chosen to ensure sufficient coverage of the computational domain. With this initial sampling, a coarse RaNN solution $\phi_\rho^{1}$ is obtained, and an approximation of the zero level set can be extracted (shown as the green curve in the middle subfigure of Figure \ref{fig:CollocationPoints}).

\begin{figure}[!htbp]
    \centering
    \includegraphics[width=0.9\textwidth]{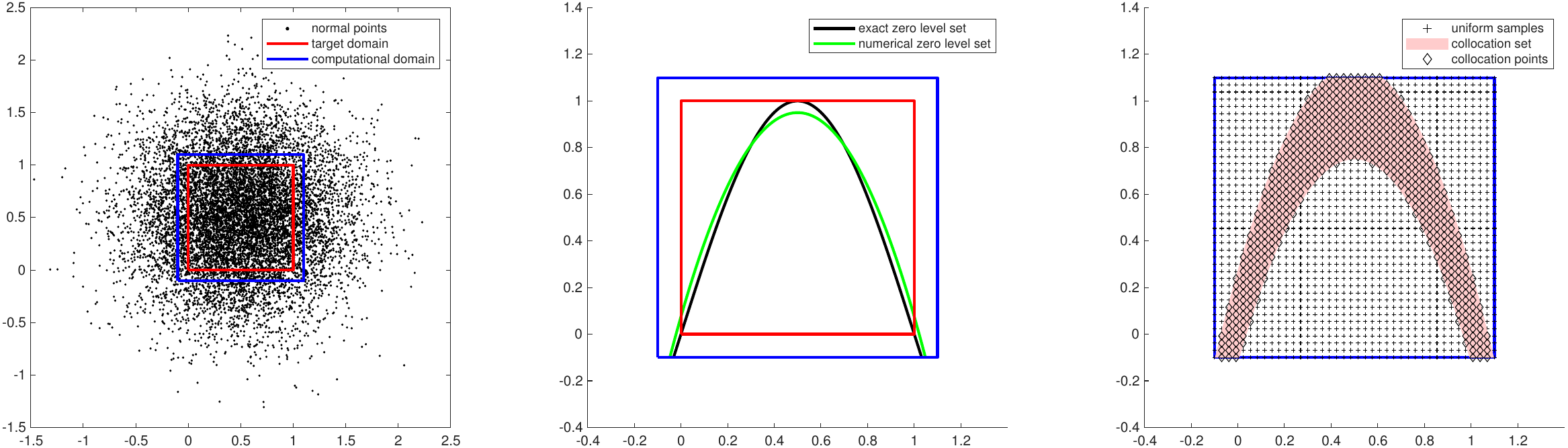}
    \caption{Schematic diagram of adaptive collocation points.}
    \label{fig:CollocationPoints}
\end{figure}

To update the interior collocation points, the influence of regions outside the initial target domain cannot be neglected. We therefore introduce a computational domain $D=(0,T)\times\Omega$ (depicted by the blue rectangle in Figure \ref{fig:CollocationPoints}) and uniformly sample points $\{(t_i,\bY_i)\}_{i=1}^{N}$ from this domain. Based on the coarse solution $\phi_\rho^{1}$, we define the updated set of interior collocation points
\begin{align*}
\Lambda_A=\left\{(t_i,\bY_i):\;|\phi_\rho^{1}(t_i,\bY_i)|\le\varepsilon_A\right\},
\end{align*}
which corresponds to a tubular neighborhood of the zero level set (the red shaded region in the right subfigure of Figure \ref{fig:CollocationPoints}), where $\varepsilon_A>0$ is a user-specified threshold. This procedure relies on the coarse approximation being sufficiently accurate to localize the neighborhood of the zero level set. Boundary collocation points are treated in an analogous manner. Using the updated collocation sets, we recompute the RaNN approximation, which is again denoted by $\phi_\rho^{1}$ for simplicity. 

\begin{remark}
To ensure numerical accuracy, the parameter $\varepsilon_A$ should be chosen large enough so that $\Lambda_A$ contains the zero level set of the exact solution. An excessively large $\varepsilon_A$, however, leads to an increased number of collocation points and reduced computational efficiency. Alternatively, a smaller $\varepsilon_A$ can be used in combination with multiple update cycles, at the cost of additional computational effort. In all numerical experiments reported in this paper, we select a sufficiently large $\varepsilon_A$ and update the collocation points only once.
\end{remark}

\subsection{Error-Indicator Points}

To construct the error-indicator points $\bX_{\mathrm{err}}^l$, we first need an approximation of the zero level set in high dimensions; a practical procedure is described in Subsection \ref{Sec:zerolevelset}. Once a set of points on (or near) the zero level set is available, we select $\bX_{\mathrm{err}}^l$ using an error estimator. In our experiments, we use the residual $\big|\cG\phi_\rho^l(\bX)\big|$ as the error indicator for $\bX\in\Lambda_A$.

\section{Numerical Analysis} \label{sec:analysis}

Following the operator form \eqref{eq:ls-operator}, let $D\subset\Real^{m}$ be a bounded Lipschitz domain in the extended variable $\bX$, and $\Gamma\subset\partial D$ be the inflow boundary. We consider the Hilbert spaces
\begin{align*}
    V_1 = L^2(D),\qquad V_2 = H^1(D),\qquad Y = L^2(D),\qquad Z = L^2(\Gamma).
\end{align*}
Here we use separate $V_1$ and $V_2$, rather than the same $V$ as in \cite{Dang2024AGRNN}.

\subsection{Graph Norm Equivalence}

To derive an error estimate, we establish a graph norm equivalence of the same type as Assumption 4.1 in \cite{Dang2024AGRNN}. The following structural assumption encodes the regularity of the vector field and the well-posedness of the characteristic flow.

\begin{assumption}[Regular flow of characteristics]\label{ass:char}
We assume that
\begin{enumerate}
  \item The transport field $\ba\in C^1(\overline D;\Real^{m-1})$ is bounded on $\overline D$: there exists $C_a>0$ such that
        \begin{align*}
          |\ba(\bX)| \le C_a, \qquad \forall\,\bX\in\overline D.
        \end{align*}
  \item For every $\bzeta\in\Gamma$ the characteristic ODE
        \begin{align}\label{eq:char-ode}
          \frac{\mathrm{d}}{\mathrm{d}s}\bX(s) = (1,\ba(\bX(s))),\qquad \bX(0)=\bzeta,
        \end{align}
        has a unique solution on $s\in[0,T]$ that remains inside $D$, and the associated flow map
        \begin{align*}
          \Psi:(0,T)\times\Gamma\to D,\qquad\Psi(s,\bzeta)=\bX(s;\bzeta)
        \end{align*}
        is a $C^1$-diffeomorphism. Its Jacobian $J(s,\bzeta):=\bigl|\det D\Psi(s,\bzeta)\bigr|$ satisfies uniform bounds
        \begin{align*}
          0<c_0\le J(s,\bzeta)\le C_0<\infty,\qquad\forall\,(s,\bzeta)\in(0,T)\times\Gamma.
        \end{align*}
\end{enumerate}
\end{assumption}

\begin{remark}
    Assumption \ref{ass:char} is a standard requirement in the theory of first-order linear transport equations, see classical results in \cite{Evans2022PDE}. For the level-set equations \eqref{eq:HJ5}, \eqref{eq:HJ2}, and \eqref{eq:Hyper3}, it is satisfied whenever $\bF$, $q$ and $H$ (together with their first derivatives) are bounded and Lipschitz on the bounded computational domain in the $(\bx,z,\bp)$ variables.
\end{remark}

We now show that under Assumption \ref{ass:char}, operator $\cG$ admits the following graph norm equivalence.

\begin{theorem}[Graph norm equivalence for the level-set equations]
\label{thm:graph-levelset}
Under Assumption \ref{ass:char}, consider the operator $\cG:V_2\to Y$ given by \eqref{eq:ls-operator} and the boundary operator $\cB:V_2\to Z$ defined in \eqref{eq:ls-bc}. Then there exist constants $C_L,C_U>0$, depending only on $D$, $C_a$, $c_0$, and $C_0$, such that for all $\phi\in H^1(D)$
\begin{align}\label{eq:graph-levelset}
  C_L\|\phi\|_{0,D}^2\;\le\;\|\cG\phi\|_{0,D}^2 + \|\cB\phi\|_{0,\Gamma}^2\;\le\;C_U\|\phi\|_{1,D}^2.
\end{align}
\end{theorem}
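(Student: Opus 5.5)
The upper bound is the routine half. Since $\cG\phi=\partial_t\phi+\ba\cdot\nabla_{\bY}\phi$ and $|\ba|\le C_a$ on $\overline D$, Cauchy--Schwarz pointwise gives $|\cG\phi|^2\le(1+C_a^2)|\nabla_{\bX}\phi|^2$. Integrating yields $\|\cG\phi\|_{0,D}^2\le(1+C_a^2)|\phi|_{1,D}^2$. For the boundary term, $\cB$ is the trace onto $\Gamma\subset\partial D$, and $D$ is a bounded Lipschitz domain, so the trace theorem gives $\|\phi\|_{0,\Gamma}^2\le C_{\mathrm{tr}}(D)\|\phi\|_{1,D}^2$. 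Adding the two estimates gives the right inequality with $C_U=1+C_a^2+C_{\mathrm{tr}}$.

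For the lower bound I will use the characteristic flow of Assumption \ref{ass:char}. By density of $C^1(\overline D)$ in $H^1(D)$, and since both sides of \eqref{eq:graph-levelset} are continuous in the $H^1$ norm (by the upper bound just proved), it suffices to take $\phi\in C^1(\overline D)$. Set $f=\cG\phi$ and $g=\phi|_\Gamma$. Along a characteristic $s\mapsto\Psi(s,\bzeta)$ the chain rule gives $\frac{\mathrm d}{\mathrm ds}\phi(\Psi(s,\bzeta))=f(\Psi(s,\bzeta))$. Hence
\begin{align*}
\phi(\Psi(s,\bzeta))=g(\bzeta)+\int_0^s f(\Psi(\tau,\bzeta))\ud\tau .
\end{align*}
Squaring and applying Cauchy--Schwarz in $\tau$, with $s\le T$, gives
\begin{align*}
|\phi(\Psi(s,\bzeta))|^2\le 2|g(\bzeta)|^2+2T\int_0^T|f(\Psi(\tau,\bzeta))|^2\ud\tau .
\end{align*}
Next I change variables $\bX=\Psi(s,\bzeta)$. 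Because $\Psi$ is a $C^1$-diffeomorphism onto $D$ with $c_0\le J\le C_0$,
\begin{align*}
\|\phi\|_{0,D}^2=\int_\Gamma\int_0^T|\phi(\Psi)|^2J\ud s\ud\bzeta\le C_0\Bigl(2T\|g\|_{0,\Gamma}^2+2T^2\int_\Gamma\int_0^T|f(\Psi)|^2\ud\tau\ud\bzeta\Bigr),
\end{align*}
where the surface measure on $\Gamma=\{0\}\times\Omega$ is Lebesgue measure on $\Omega$. The last double integral is at most $c_0^{-1}\|f\|_{0,D}^2$, again by the change of variables. This gives $\|\phi\|_{0,D}^2\le 2TC_0\max(1,T/c_0)\bigl(\|\cG\phi\|_{0,D}^2+\|\cB\phi\|_{0,\Gamma}^2\bigr)$. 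So $C_L^{-1}$ is this prefactor, which depends only on $T$ (part of $D$), $c_0$ and $C_0$.

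The main obstacle is the measure-theoretic justification, not the estimates themselves. Two points need care. First, $\Psi$ must map $(0,T)\times\Gamma$ onto $D$ up to a null set, so that every point of $D$ is reached by a characteristic issued from $\Gamma$. This is what the assumption that no characteristics enter through the lateral boundary provides, together with the requirement that $\Psi$ be a diffeomorphism onto $D$. Second, the density step must be legitimate. The trace is continuous on $H^1(D)$ and $\cG:H^1\to L^2$ is bounded, so both sides of the lower bound pass to the limit along a $C^1(\overline D)$ approximating sequence, which exists because $D$ is Lipschitz. Alternatively, the chain-rule identity can be applied directly to the composition $\phi\circ\Psi$, which lies in $H^1$ because $\Psi$ is a bi-Lipschitz $C^1$ map.
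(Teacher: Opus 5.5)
Your proposal is correct and follows essentially the same route as the paper. The upper bound comes from the boundedness of $\ba$ plus the trace theorem. The lower bound comes from integrating $\cG\phi$ along characteristics, applying Cauchy--Schwarz, and changing variables twice using $c_0\le J\le C_0$, which gives the same constant $C_L=\min\{1/(2TC_0),\,c_0/(2T^2C_0)\}$; the same $C^1(\overline D)$ density argument then extends it to $H^1(D)$. The only difference is cosmetic: your pointwise bound $|\cG\phi|^2\le(1+C_a^2)|\nabla_{\bX}\phi|^2$ replaces the paper's $(a+b)^2\le 2a^2+2b^2$ and gives a slightly sharper $C_U$.
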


\begin{proof}
We first provide the proof for the case where $\phi \in C^1(\overline{D})$, dividing it into two steps.

\noindent\emph{Step 1: Upper bound.}
Since $\ba$ is bounded on $\overline D$, there exists $C_a>0$ such that $|\ba(\bX)|\le C_a$ for all $\bX=(t,\bY)\in D$. For $\phi\in C^1(\overline D)$ we have
\begin{align*}
  |\cG\phi(\bX)| = \bigl|\partial_t\phi(\bX) + \ba(\bX)\cdot\nabla_{\bY}\phi(\bX)\bigr| \le |\partial_t\phi(\bX)| + C_a|\nabla_{\bY}\phi(\bX)|.
\end{align*}
Squaring and integrating over $D$ yields
\begin{align*}
  \|\cG\phi\|_{0,D}^2\le 2\|\partial_t\phi\|_{0,D}^2+2C_a^2\|\nabla_{\bY}\phi\|_{0,D}^2\le C\Bigl(\|\partial_t\phi\|_{0,D}^2+\|\nabla_{\bY}\phi\|_{0,D}^2\Bigr)\le C\,\|\phi\|_{1,D}^2,
\end{align*}
for some constant $C>0$ depending only on $C_a$ and $D$. Moreover, by the trace theorem on the Lipschitz domain $D$,
\begin{align*}
  \|\cB\phi\|_{0,\Gamma}^2=\|\phi\|_{0,\Gamma}^2\le C_{\mathrm{tr}}^2 \|\phi\|_{1,D}^2,
\end{align*}
where $C_{\mathrm{tr}}$ is the trace constant. Combining the two estimates, we obtain
\begin{align*}
  \|\cG\phi\|_{0,D}^2 + \|\cB\phi\|_{0,\Gamma}^2
  \le (C+C_{\mathrm{tr}}^2)\,\|\phi\|_{1,D}^2
  =: C_U\,\|\phi\|_{1,D}^2,
\end{align*}
which proves the upper bound in \eqref{eq:graph-levelset}.

\noindent\emph{Step 2: Lower bound via characteristics.}
Let $\phi\in C^1(\overline D)$ and denote
\begin{align*}
  F(\bX) := \cG\phi(\bX)=\partial_t\phi(\bX) + \ba(\bX)\cdot\nabla_{\bY}\phi(\bX).
\end{align*}
For each $\bzeta\in\Gamma$ consider the characteristic curve $\bX(\tau;\bzeta)$ solving \eqref{eq:char-ode}. Along this curve we have
\begin{align}\label{eq:char-phi-deriv}
  \frac{\mathrm{d}}{\mathrm{d}\tau}\phi\bigl(\bX(\tau;\bzeta)\bigr)=\partial_\tau\phi\bigl(\bX(\tau;\bzeta)\bigr)+\ba\bigl(\bX(\tau;\bzeta)\bigr)\cdot\nabla_{\bY}\phi\bigl(\bX(\tau;\bzeta)\bigr)=F\bigl(\bX(\tau;\bzeta)\bigr).
\end{align}
Integrating \eqref{eq:char-phi-deriv} from $0$ to $s\in[0,T]$ gives
\begin{align}\label{eq:char-phi-int}
  \phi\bigl(\bX(s;\bzeta)\bigr)=\phi(\bzeta)+\int_0^s F\bigl(\bX(\tau;\bzeta)\bigr)\,\ud\tau .
\end{align}
Using $(a+b)^2\le 2a^2+2b^2$ and the Cauchy--Schwarz inequality, we find
{\small\begin{align*}
  \bigl|\phi\bigl(\bX(s;\bzeta)\bigr)\bigr|^2
  &\le 2|\phi(\bzeta)|^2+2\left(\int_0^s |F(\bX(\tau;\bzeta))|\,\ud\tau\right)^2\le 2|\phi(\bzeta)|^2+2s\int_0^s |F(\bX(\tau;\bzeta))|^2\,\ud\tau\\
  &\le 2|\phi(\bzeta)|^2+2T\int_0^T |F(\bX(\tau;\bzeta))|^2\,\ud\tau.
\end{align*}}
Integrating this inequality in $s$ over $[0,T]$ yields
\begin{align}\label{eq:phi-char-L2}
  \int_0^T\bigl|\phi\bigl(\bX(s;\bzeta)\bigr)\bigr|^2\ud s\le2T|\phi(\bzeta)|^2+2T^2\int_0^T |F(\bX(\tau;\bzeta))|^2\;\ud\tau .
\end{align}

We now change variables from $\bX\in D$ to $(s,\bzeta)\in(0,T)\times\Gamma$ via the flow map $\Psi(s,\bzeta)=\bX(s;\bzeta)$. By Assumption \ref{ass:char},
\begin{align*}
  \ud\bX = J(s,\bzeta)\,\ud s\,\ud\sigma(\bzeta),
\end{align*}
where $\ud\sigma$ denotes the surface measure on $\Gamma$. Using the upper bound $J(\tau,\bzeta)\le C_0$ and \eqref{eq:phi-char-L2}, we obtain
\begin{align*}
  \|\phi\|_{0,D}^2&=\int_D |\phi(\bX)|^2\;\ud\bX=\int_{\Gamma}\int_0^T\bigl|\phi\bigl(\bX(\tau;\bzeta)\bigr)\bigr|^2J(\tau,\bzeta)\,\ud \tau\,\ud\sigma(\bzeta)\\
  &\le C_0\int_{\Gamma}\int_0^T\bigl|\phi\bigl(\bX(\tau;\bzeta)\bigr)\bigr|^2\,\ud \tau\,\ud\sigma(\bzeta)\\
  &\le 2TC_0\int_{\Gamma}|\phi(\bzeta)|^2\;\ud\sigma(\bzeta)+2T^2C_0\int_{\Gamma}\int_0^T|F(\bX(\tau;\bzeta))|^2\;\ud\tau\,\ud\sigma(\bzeta).
\end{align*}
For the second term we use the lower bound $J(\tau,\bzeta)\ge c_0$ to change variables back to $\bX\in D$:
\begin{align*}
  \int_{\Gamma}\int_0^T|F(\bX(\tau;\bzeta))|^2\;\ud\tau\,\ud\sigma(\bzeta)&=\int_D |F(\bX)|^2 \frac{1}{J(\tau(\bX),\bzeta(\bX))}\;\ud\bX\\
  &\le \frac{1}{c_0}\int_D |F(\bX)|^2\;\ud\bX=\frac{1}{c_0}\|\cG\phi\|_{0,D}^2,
\end{align*}
where $(\tau(\bX),\bzeta(\bX))$ is the unique preimage of $\bX$ under $\Psi$. As a result,
\begin{align*}
  \|\phi\|_{0,D}^2\le 2TC_0\|\phi\|_{0,\Gamma}^2+\frac{2T^2C_0}{c_0}\|\cG\phi\|_{0,D}^2.
\end{align*}
Rearranging gives
\begin{align*}
  \|\cG\phi\|_{0,D}^2 + \|\phi\|_{0,\Gamma}^2\ge C_L\,\|\phi\|_{0,D}^2\text{ with }C_L:=\min\left\{\frac{1}{2TC_0},\;\frac{c_0}{2T^2C_0}\right\}>0.
\end{align*}
This is exactly the lower bound in \eqref{eq:graph-levelset} for $\phi\in C^1(\overline D)$.

Since $D$ is Lipschitz, $C^1(\overline D)$ is dense in $H^1(D)$. Hence, for any $\phi\in H^1(D)$, there exists a sequence $\{\phi_n\}\subset C^1(\overline D)$ such that
\begin{align*}
\|\phi_n-\phi\|_{1,D}\to 0 \quad \text{as } n\to\infty.
\end{align*}
Because $\ba\in C^1(\overline D)$ is bounded, the operator $\cG:H^1(D)\to L^2(D)$ is bounded and linear, and the trace operator $\cB:H^1(D)\to L^2(\Gamma)$ is bounded, hence
\begin{align*}
\|\cG(\phi_n-\phi)\|_{0,D}\to 0 ,\quad\quad\|\cB(\phi_n-\phi)\|_{0,\Gamma}\to 0.
\end{align*}
Passing to the limit in the inequality for $\phi_n$ yields \eqref{eq:graph-levelset} for all $\phi\in H^1(D)$.
\end{proof}

With the graph norm equivalence at hand, we can invoke the Hilbert-space error-decomposition framework developed in \cite{Dang2024AGRNN} for the present level-set operator $\cG$ and boundary operator $\cB$. 
In particular, we follow the notation and constants in \cite{Dang2024AGRNN} and only highlight the modifications specific to the level-set setting; detailed proofs of the approximation and statistical components are therefore omitted and can be found in \cite{Dang2024AGRNN}.

For clarity, we first illustrate the analysis for a single-hidden-layer RaNN:
\begin{align*}
    \cN_\rho(D)=\left\{\phi(\bx)=\balpha\rho(\bW\bx+\bb):\bx\in D,\,\|\balpha\|_1\le C_N\right\}.
\end{align*}
Let $\phi^*$ be the exact solution of the PDEs \eqref{eq:ls-operator} -- \eqref{eq:ls-bc},  we introduce
\begin{align*}
    \phi_a=\mathop{\arg\min}\limits_{\phi\in\cN_\rho(D)}\|\phi-\phi^*\|_{1,D},
\end{align*}
which implies that $\phi_a$ is the best approximation of $\phi^*$ in the RaNN space $\cN_\rho(D)$. Our goal is to compute the approximate solution $\phi_\rho$ by minimizing the loss function
\begin{align}
    \cL_\eta(\phi)=\|\cG\phi\|_{0,D}^2+\eta\|\cB\phi-g\|_{0,\Gamma}^2,
\end{align}
where $\eta$ is a boundary penalty parameter. In our experiments we take $\eta=15$ and omit $\eta$ in what follows. With proper collocation points $\{\bX^I_i\}_{i=1}^{N^I}$, $\{\bX^B_i\}_{i=1}^{N^B}$ and basis functions $\bpsi_1(\bx)$, that is
\begin{align}\label{eq:emp-loss-L2}
    \widetilde{\cL}(\phi)=\frac{|D|}{N^I}\sum_{i=1}^{N^I}|\cG\phi(\bX^I_i)|^2+\frac{|\Gamma|}{N^B}\sum_{i=1}^{N^B}|\cB\phi(\bX^B_i)-g(\bX^B_i)|^2.
\end{align}
Since $\phi(\bx)=\balpha\bpsi_1(\bx)$ is the linear combination of basis functions, we only use least-squares method to calculate $\balpha$. Let $\bar{\phi}_\rho$ be the minimizer of the empirical loss function $\widetilde{\cL}(\phi)$ over $\cN_\rho(D)$:
\begin{align*}
    \bar{\phi}_\rho=\mathop{\arg\min}\limits_{\phi\in\cN_\rho(D)}\widetilde{\cL}(\phi),
\end{align*}
and $\phi_\rho$ be the solution obtained by least-squares solver.

\begin{assumption}[Activation admissibility]\label{As:act}
The activation function $\rho$ satisfies:
\begin{itemize}
    \item[(A1)]$\rho\in C^1(\Real)\cap W^{1,\infty}(\Real)$.
    \item[(A2)] $\widehat{\rho'}(1)\neq 0$, $\rho'\in L^1(\Real)$ and $\rho''\in L^1(\Real)$.
\end{itemize}
\end{assumption}

\begin{remark}
Activation functions satisfying Assumption \ref{As:act} include $\tanh$, the logistic function, and other smooth sigmoid-type functions. In contrast, $\mathrm{ReLU}$ and $\mathrm{ReLU}^k$ do not belong to this class due to their lack of sufficient smoothness. Nevertheless, ReLU-type activations are also known to enjoy universal approximation properties; see, e.g., \cite{Liu2025ReLU} for the proof.
\end{remark}

By leveraging the graph norm equivalence and following a similar approach to that in \cite{Dang2024AGRNN}, we get
\begin{align*}
    C_L\|\phi^*-\phi_\rho\|_{0,D}^2\le\cL(\phi_\rho)
    \le C_U\|\phi^*-\phi_a\|_{1,D}^2+2\sup\limits_{\phi\in\cN_\rho(D)}|\cL(\phi)-\widetilde{\cL}(\phi)| +(\widetilde{\cL}(\phi_\rho)-\widetilde{\cL}(\phi_a)).
\end{align*}
The estimates for the first two terms have been established in \cite{Dang2024AGRNN}, the key results are summarized as follows.

\begin{theorem}[Approximation Error]
Suppose that Assumption \ref{ass:char} holds, that $\phi^*\in H^{1+\varepsilon_C}(D)$ for some $\varepsilon_C>0$, and that the activation function $\rho$ satisfies Assumption \ref{As:act}. Assume further that the inner parameters $(\bW,\bb)$ of the RaNN are sampled from a fixed bounded box $\Theta\subset\mathbb R^{m+1}$.

Then, for every tolerance $\varepsilon>0$ and confidence level $\delta\in(0,1)$, there exist a network width $M$ such that the corresponding best approximation $\phi_a\in\mathcal N_\rho(D)$ satisfies
\begin{align*}
    \|\phi^*-\phi_a\|_{1,D} \;\le\; \varepsilon
\end{align*}
with probability at least $1-\delta$ with respect to the randomness of the inner parameters.
\end{theorem}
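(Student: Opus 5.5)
The plan is a Barron-type integral representation of $\phi^*$, followed by Monte Carlo discretization using the random inner parameters. Assumption \ref{ass:char} enters only through the existence of $\phi^*$ on the bounded domain $D$. The two hypotheses actually used are $\phi^*\in H^{1+\varepsilon_C}(D)$ and Assumption \ref{As:act}.

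\emph{Step 1: extension and Barron norm.} Since $D$ is a bounded Lipschitz domain, apply a Stein extension to get $E\phi^*\in H^{1+\varepsilon_C}(\Real^m)$ with compact support, with $\|E\phi^*\|_{H^{1+\varepsilon_C}}\le C\|\phi^*\|_{1+\varepsilon_C,D}$. Then use Cauchy--Schwarz with the weight $(1+|\omega|^2)^{-(m/2+\varepsilon_C')}$. This gives a finite spectral Barron-type quantity $\int(1+|\omega|)^{1+s}|\widehat{E\phi^*}(\omega)|\,\ud\omega$ for some $s\ge 0$, possibly after mollifying $E\phi^*$ at a small scale. The mollification error is $O(\varepsilon/3)$ in $H^1$, and its cost is a large but finite Barron norm.

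\emph{Step 2: ridge-function integral representation.} Use (A2), $\widehat{\rho'}(1)\neq0$ with $\rho',\rho''\in L^1$. This is the classical Barron/Siegel--Xu argument: each Fourier mode $e^{i\omega\cdot\bx}$, restricted to $\bx$ in the bounded set $D$, can be written as an integral of $\rho(\bW\cdot\bx+b)$ against a kernel in $(\bW,b)$. The result is a representation
\begin{align*}
  \phi^*(\bx)=\int_{\Theta}\kappa(\bW,b)\,\rho(\bW\bx+b)\,\ud\mu(\bW,b),\qquad \bx\in D,
\end{align*}
with $\kappa\in L^\infty$ (or at least $L^2(\mu)$), up to an $H^1$ error of at most $\varepsilon/3$. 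Here $\mu$ is the uniform measure on the box $\Theta$. Truncating frequencies to $|\bW|\le R$ and the bias to $|b|\le B$ costs another $\varepsilon/3$ in $H^1$. The decay of $\rho',\rho''$ and the Barron weight control the tails, and this fixes a box $\Theta$ large enough.

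\emph{Step 3: Monte Carlo.} Draw $(\bW_j,b_j)\sim\mu$ i.i.d. for $j=1,\dots,M$, and set $\phi_M=\frac1M\sum_j\kappa(\bW_j,b_j)\rho(\bW_j\bx+b_j)$. Because $\rho\in W^{1,\infty}$ (A1), each summand lies in $H^1(D)$ with norm at most $C|\kappa|(1+R)|D|^{1/2}$. The Hilbert-space variance identity then gives
\begin{align*}
  \mathbb E\|\phi_M-\textstyle\int\kappa\rho\,\ud\mu\|_{1,D}^2\le \frac{C}{M}\int|\kappa|^2(1+|\bW|)^2\ud\mu.
\end{align*}
Chebyshev's inequality (or a Hilbert-space Hoeffding inequality for an exponential-in-$\delta$ version) makes this at most $(\varepsilon/3)^2$ with probability at least $1-\delta$ once $M\gtrsim C/(\varepsilon^2\delta)$. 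The coefficients $\alpha_j=\kappa_j/M$ satisfy $\|\balpha\|_1\le\|\kappa\|_\infty$, so $\phi_M\in\cN_\rho(D)$ provided $C_N\ge\|\kappa\|_\infty$. Finally, $\phi_a$ is the best approximation in $\cN_\rho(D)$, so $\|\phi^*-\phi_a\|_{1,D}\le\|\phi^*-\phi_M\|_{1,D}\le\varepsilon$.

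\emph{Main obstacle.} The hardest step is Step 2: obtaining a \emph{bounded} density $\kappa$ with respect to the fixed uniform law on $\Theta$, rather than a signed measure, with control in $H^1$ and not just $L^2$. This is where $\rho''\in L^1$ and the extra $\varepsilon_C$ smoothness are needed. I would first take the representation of \cite{Dang2024AGRNN} (Siegel--Xu type) as given and only verify the truncation to $\Theta$. If a bounded density is unavailable, the fallback is importance sampling: absorb $\kappa$'s $L^2(\mu)$ norm into the Monte Carlo variance bound and enlarge $C_N$ accordingly.
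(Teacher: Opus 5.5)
The paper gives no proof of this theorem; it defers to \cite{Dang2024AGRNN}, so your outline can only be checked against the statement. Your Step 3 is fine. Step 2, however, has a genuine quantifier problem. You choose the box $\Theta$ (the cut-offs $R$, $B$) \emph{after} $\varepsilon$, and you also need $C_N\ge\|\kappa\|_\infty$, which grows with $1/\varepsilon$ (a dependence the statement leaves implicit). The statement, by contrast, fixes $\Theta$ before $\varepsilon$.

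This cannot be patched inside your argument. Your representation realizes the mode $e^{i\omega\cdot\bx}$ with inner weight $\omega$ itself. So for a fixed box $|\bW|\le R_0$, the part of $E\phi^*$ with $|\omega|>R_0$ is never represented, and it leaves an $H^1$ error that does not decrease with $M$. In fact, under (A1)--(A2) alone the fixed-box claim is false. For $\bx\in D$ and $(\bW,b)\in\Theta$, the argument $\bW\bx+b$ ranges over a bounded interval $I$. Take $\rho'$ a smooth compactly supported bump equal to $1$ on $I$ (generic bumps have $\widehat{\rho'}(1)\neq0$). Then every feature is affine on $D$, and no width helps.

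You therefore have two options. One is to state explicitly that you prove the version in which $\Theta$ and $C_N$ grow as $\varepsilon\to0$. The other, if $\Theta$ is to stay fixed, is to add a hypothesis and change the argument:
\begin{itemize}
\item Assume $\rho$ is real-analytic and non-polynomial (true for the Gaussian used in the experiments), with $\bW=0$ interior to the weight range.
\item Replace the frequency truncation by a density argument. Since $\partial_{\bW}^{\alpha}\rho(\bW\cdot\bx+b)|_{\bW=0}=\bx^{\alpha}\rho^{(|\alpha|)}(b)$, all polynomials lie in the $H^1(D)$-closure of the span of the features.
\item Smear a finite combination of interior features into a bounded density on $\Theta$, using continuity of $(\bW,b)\mapsto\rho(\bW\cdot\bx+b)$ in $H^1(D)$.
\item Your Step 3 then applies unchanged, with $C_N$ and $M$ depending on $\varepsilon$ through $\|\kappa\|_\infty$.
\end{itemize}

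Two smaller remarks. First, in Step 1 the Cauchy--Schwarz bound gives $\int(1+|\omega|)^{1+s}|\widehat{E\phi^*}|\,\ud\omega<\infty$ with $s\ge0$ only when $\varepsilon_C-s>m/2$. So the mollification is necessary, not optional. After it (or after simply cutting frequencies at $|\omega|\le R$ and bounding the tail by Plancherel), the extra smoothness $\varepsilon_C$ is never used, and your argument works for $\phi^*\in H^1(D)$.

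Second, the step you call the main obstacle is routine. $\widehat{E\phi^*}$ is bounded, since $E\phi^*$ may be taken compactly supported. Insert a smooth cut-off $\chi$ in $b$ into $e^{i\omega\cdot\bx}=\widehat{\rho'}(1)^{-1}\int\rho'(\omega\cdot\bx+b)e^{-ib}\,\ud b$ and integrate by parts. This gives an integral of $\rho(\omega\cdot\bx+b)$ against the bounded kernel $-\widehat{\rho'}(1)^{-1}\partial_b\bigl(e^{-ib}\chi(b)\bigr)$. The $b$-tails are controlled in $H^1(D)$ by $\rho',\rho''\in L^1$.
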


\begin{theorem}[Statistical Component Bound] \label{Thm:statisticerr}
Under the Assumption \ref{ass:char}, with probability at least $1-\delta_s$ for the random choice of collocation points, the statistical component has bound
\begin{align*}
    \sup_{\phi\in\cN_\rho(D)}\left|\cL(\phi)-\widetilde{\cL}(\phi)\right|\,\lesssim\,C_N^2C_R^2M\left(\sqrt{\frac{\log(6M/\delta_s)}{N^I}}+\sqrt{\frac{\log(6M/\delta_s)}{N^B}}\right),
\end{align*}
where $\widetilde{\cL}(\phi)$ is the empirical $L^2$ loss defined in \eqref{eq:emp-loss-L2}.
\end{theorem}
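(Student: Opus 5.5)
The bound will come from a uniform concentration argument over the linear-coefficient parametrization of $\cN_\rho(D)$. First I reduce the supremum over functions to a supremum over a finite set of matrix entries. Writing $\phi=\balpha\bpsi_1$, the interior part of the loss is a quadratic form in $\balpha$:
\begin{align*}
\|\cG\phi\|_{0,D}^2=\sum_{j,k=1}^M \alpha_j\alpha_k A_{jk},\qquad A_{jk}=\int_D \cG\psi_j\,\cG\psi_k\ud\bX .
\end{align*}
Its empirical counterpart is $\widetilde A_{jk}=\frac{|D|}{N^I}\sum_i \cG\psi_j(\bX_i^I)\cG\psi_k(\bX_i^I)$. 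The boundary part is treated the same way: there is a quadratic term $B_{jk}$, a linear term $b_j=\int_\Gamma \psi_j g$, and a constant $\int_\Gamma g^2$, each with its empirical version. Then
\begin{align*}
|\cL(\phi)-\widetilde\cL(\phi)|\le \|\balpha\|_1^2\bigl(\max_{j,k}|A_{jk}-\widetilde A_{jk}|+\max_{j,k}|B_{jk}-\widetilde B_{jk}|\bigr)+2\|\balpha\|_1\max_j|b_j-\widetilde b_j|+\Bigl|\int_\Gamma g^2-\widetilde{\textstyle\int_\Gamma} g^2\Bigr| .
\end{align*}
With $\|\balpha\|_1\le C_N$, this reduces the supremum to entrywise deviations, uniformly in $\phi$.

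Next I bound each entry's deviation. By Assumption \ref{ass:char}, $|\ba|\le C_a$, and by Assumption \ref{As:act}, $\rho,\rho'$ are bounded. Since the inner parameters lie in a bounded box, $|\cG\psi_j|\le C(1+C_a)\max|W|=:C_R$ uniformly on $D$, and $|\psi_j|\le C_R$ as well. Each summand in $\widetilde A_{jk}$ is therefore an i.i.d. random variable bounded by $|D|C_R^2$ with mean $A_{jk}$. Hoeffding's inequality gives, for each entry, a deviation of at most $|D|C_R^2\sqrt{2\log(2/\delta')/N^I}$ with probability $1-\delta'$. The boundary entries are handled the same way with $N^B$, absorbing $\|g\|_\infty$ into the constants.

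Then I take a union bound. The families $\{A_{jk}\}$, $\{B_{jk}\}$, $\{b_j\}$ and the $g$ term together contain $O(M^2)$ quantities. Allocating failure probability $\delta'=\delta_s/(6M^2)$ to each yields $\log(6M/\delta_s)$-type terms up to constant factors, since $\log(M^2)\le2\log M$. Multiplying by $C_N^2$ gives the claimed bound. If the matrix form is used instead, via $\|\balpha\|_2^2\|A-\widetilde A\|_2\le \|\balpha\|_1^2 M\max|\cdot|$, this also accounts for the factor $M$ in the statement.

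The main obstacle is bookkeeping rather than depth. The points to get right are the uniform bound $C_R$ on $\cG\psi_j$, which uses the boundedness of $\ba$ and of $\rho'$, and matching the union-bound constants to the stated $\log(6M/\delta_s)$ form. The sampling of collocation points must also be uniform on $D$ and $\Gamma$ so that the empirical means are unbiased; that is the hypothesis I would state explicitly.
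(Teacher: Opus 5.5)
Your argument is correct, and it follows a different route from the one the paper relies on. The paper gives no proof of this statement and defers to \cite{Dang2024AGRNN}. Judging from the form of the bound, that argument works with the $M\times M$ Gram matrices in spectral norm via a matrix concentration inequality. The linear factor $M$ together with $\log(6M/\delta_s)$ is what three events, each paying a dimensional factor $2M$, would produce. In that route each summand $|D|\,\cG\bpsi(\bX_i^I)\,\cG\bpsi(\bX_i^I)^\top$ has norm at most $|D|MC_R^2$, and the quadratic form is controlled through $\|\balpha\|_2^2\le\|\balpha\|_1^2\le C_N^2$.

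You instead pair the $\ell^1$ constraint defining $\cN_\rho(D)$ with its dual norm, the entrywise maximum. Then only scalar Hoeffding bounds and a union bound over $O(M^2)$ entries are needed, and since $\log(M^2)\le 2\log M$ the union bound costs only a constant. This is more elementary, and it is in fact sharper: you get $C_N^2C_R^2\bigl(\sqrt{\log(6M/\delta_s)/N^I}+\sqrt{\log(6M/\delta_s)/N^B}\bigr)$ without the factor $M$. The stated estimate then follows from $M\ge 1$, so your closing detour through $\|\cdot\|_2\le M\max|\cdot|$ is unnecessary; it only reproduces the loss built into the spectral-norm route.

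What the matrix route buys is control of the Gram-matrix deviation in operator norm. That is the natural quantity under an $\ell^2$ coefficient constraint, where your route would itself pay $\|\balpha\|_1^2\le M\|\balpha\|_2^2$. It also ties in with the singular values used in the optimization term.

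Two caveats apply equally to the stated theorem. First, the $g$-dependent linear and constant terms, of size $C_NC_R\|g\|_{\infty}$ and $\|g\|_{\infty}^2$, fit inside $C_N^2C_R^2M$ only if the implied constant may depend on $g$ and $C_NC_R$ is bounded below. Second, as you note, the collocation points must be i.i.d.\ uniform on $D$ and $\Gamma$, conditionally on the inner parameters $(\bW,\bb)$. The adaptively filtered set $\Lambda_A$ used in the experiments is not uniform on $D$.
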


Then, the subsequent analysis is primarily concerned with the last optimization term.

\subsection{Optimization Term}
Recall that $\bar{\phi}_\rho$ is the minimizer of the empirical loss function $\widetilde{\cL}(\phi)$ over $\cN_\rho(D)$, it follows that for any $\phi\in\cN_\rho(D)$, $\widetilde{\cL}(\bar{\phi}_\rho)\le\widetilde{\cL}(\phi)$. With the fact $\phi_a\in\cN_\rho(D)$, the difference $\widetilde{\cL}(\bar{\phi}_\rho)-\widetilde{\cL}(\phi_a)$ is non-positive. The optimization error can be decomposed as 
\begin{align*}
    &\widetilde{\cL}(\phi_\rho)-\widetilde{\cL}(\phi_a)=\widetilde{\cL}(\phi_\rho)-\widetilde{\cL}(\bar{\phi}_\rho)+\widetilde{\cL}(\bar{\phi}_\rho)-\widetilde{\cL}(\phi_a)\le\widetilde{\cL}(\phi_\rho)-\widetilde{\cL}(\bar{\phi}_\rho)\\
    =&\frac{|D|}{N^I}\sum_{i=1}^{N^I}\left(|\cG\phi_\rho(\bX^I_i)|^2-|\cG\bar{\phi}_\rho(\bX^I_i)|^2\right)+\frac{|\Gamma|}{N^B}\sum_{i=1}^{N^B}\left(|\cB\phi_\rho(\bX^B_i)-g(\bX^B_i)|^2-|\cB\bar{\phi}_\rho(\bX^B_i)-g(\bX^B_i)|^2\right)\\
    \le&\frac{|D|}{N^I}\left(\sum_{i=1}^{N^I}|\cG\phi_\rho(\bX^I_i)-\cG\bar{\phi}_\rho(\bX^I_i)|^2\right)^{1/2}\left(\sum_{i=1}^{N^I}|\cG\phi_\rho(\bX^I_i)+\cG\bar{\phi}_\rho(\bX^I_i)|^2\right)^{1/2}\\
    &+\frac{|\Gamma|}{N^B}\left(\sum_{i=1}^{N^B}|\cB\phi_\rho(\bX^B_i)-\cB\bar{\phi}_\rho(\bX^B_i)|^2\right)^{1/2}\left(\sum_{i=1}^{N^B}|\cB\phi_\rho(\bX^B_i)+\cB\bar{\phi}_\rho(\bX^B_i)-2g(\bX^B_i)|^2\right)^{1/2}.
\end{align*}

\begin{assumption}[Discrete operator boundedness]
For each finite-dimensional RaNN space $\cN_\rho(D)$ and for each choice of interior and boundary collocation points $\{\bX^I_i\}_{i=1}^{N^I}\subset D$, $\{\bX^B_i\}_{i=1}^{N^B}\subset\Gamma$, there exist constants $C_2>0$ and $C_3>0$, independent of $\phi\in\cN_\rho(D)$, such that
\begin{align*}
    \sum_{i=1}^{N^I}|\cG\phi(\bX^I_i)|^2\le C_2\sum_{i=1}^{N^I}|\phi(\bX^I_i)|^2,\quad\sum_{i=1}^{N^B}|\cB\phi(\bX^B_i)|^2\le C_3\sum_{i=1}^{N^B}|\phi(\bX^B_i)|^2.
\end{align*}
\end{assumption}
In finite dimensional RaNN space, this assumption is reasonable provided that the sampling points are sufficiently dense and well-distributed.

Since $\rho$ is continuous and $D$ is bounded, $\phi\in\cN_\rho(D)$ is always bounded, which means that there are constants $C_g$, $C_I$ and $C_B$ such that
\begin{align*}
    \left(\frac{1}{N^B}\sum_{i=1}^{N^B}|g(\bX^B_i)|^2\right)^{1/2}\le C_g,\quad\left(\frac{1}{N^I}\sum_{i=1}^{N^I}|\phi(\bX^I_i)|^2\right)^{1/2}\le C_I,\quad\left(\frac{1}{N^B}\sum_{i=1}^{N^B}|\phi(\bX^B_i)|^2\right)^{1/2}\le C_B,
\end{align*}
where $C_g, C_I, C_B$ depend only on the uniform $L^\infty$ bounds of $g$ and of the RaNN basis functions (through the constraint $\|\balpha\|_1\le C_N$), but are independent of the particular choice of $\phi\in\cN_\rho(D)$.

Then, we have
\begin{align*}
    \resizebox{\linewidth}{!}{
    $\widetilde{\cL}(\phi_\rho)-\widetilde{\cL}(\phi_a)\le\frac{2C_I\sqrt{C_2}|D|}{\sqrt{N^I}}\left(\sum_{i=1}^{N^I}|\cG\phi_\rho(\bX^I_i)-\cG\bar{\phi}_\rho(\bX^I_i)|^2\right)^{1/2}+\frac{2\sqrt{2}\sqrt{C_g^2+C_B^2}|\Gamma|}{\sqrt{N^B}}\left(\sum_{i=1}^{N^B}|\cB\phi_\rho(\bX^B_i)-\cB\bar{\phi}_\rho(\bX^B_i)|^2\right)^{1/2}.$
    }
\end{align*}
Taking $\phi_\rho= \balpha\bpsi $ and $\bar{\phi}_\rho=\bar{\balpha}\bpsi $, with $\balpha=(\alpha_1,\cdots,\alpha_M)$ and $\bpsi=(\psi_1,\cdots,\psi_M)^\top$, letting $\sigma_\text{max}(A)$ be the maximum singular values of $A$, where $A_{ij}=\cG\psi_j(\bX^I_i)$ as $1\le i\le N^I$ and $A_{ij}=\cB\psi_j(\bX^B_{i-N^I})$ as $N^I+1\le i\le N^I+N^B$, we obtain
\begin{align*}
    \widetilde{\cL}(\phi_\rho)-\widetilde{\cL}(\phi_a)\le\max\left\{\frac{2C_I\sqrt{C_2}|D|}{\sqrt{N^I}},\frac{2\sqrt{2}\sqrt{C_g^2+C_B^2}|\Gamma|}{\sqrt{N^B}}\right\}\sigma_\text{max}(A)\left\|\balpha-\bar{\balpha}\right\|_2.
\end{align*}
The $l^2$ norm estimate of $\balpha-\bar{\balpha}$ is given by the following theorem.
\begin{theorem}[Theorem 5.3.1 in \cite{Golub2013matrix}]
    Suppose that $\balpha$, $\br$, $\bar{\balpha}$, and $\bar{\br}$ satisfy  
    \begin{gather*}
        \|A\balpha^\top-b\|_2=\min, \quad \br=b-A\balpha^\top,\\
        \|(A+\delta A)\bar{\balpha}^\top-(b+\delta b)\|_2=\min, \quad \bar{\br}=(b+\delta b)-(A+\delta A)\bar{\balpha}^\top,
    \end{gather*}
    where $A$ has full column rank, $\|\delta A\|_2 < \sigma_\text{min}(A)$ and $\sigma_\text{min}(A)$ be the minimum singular values of $A$. Assume that $b$, $\br$ and $\balpha$ are not zero. Let $\theta\in(0,\pi/2)$ be defined by $\sin(\theta) = \|\br\|_2/\|b\|_2$. If  
    \begin{align*}
        \varepsilon_L=\max\left\{\frac{\|\delta A\|_2}{\|A\|_2}, \frac{\|\delta b\|_2}{\|b\|_2}\right\},\quad\nu=\frac{\|A\balpha^\top\|_2}{\sigma_\text{min}(A)\|\balpha\|_2},
    \end{align*}
    then
    \begin{align*}
        \frac{\|\balpha-\bar{\balpha}\|_2}{\|\balpha\|_2}\le\varepsilon_L\left\{\frac{\nu}{\cos(\theta)}+\left[1+\nu\tan(\theta)\right]\frac{\sigma_\text{max}(A)}{\sigma_\text{min}(A)}\right\}+O(\varepsilon_L^2).
    \end{align*}
\end{theorem}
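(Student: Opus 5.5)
This is the classical first-order perturbation bound for full-rank least squares, and I would prove it along the standard lines. Introduce the perturbed problem along a path $A(\epsilon)=A+\epsilon E$, $b(\epsilon)=b+\epsilon f$ with $E=\delta A/\varepsilon_L$ and $f=\delta b/\varepsilon_L$, so that $\|E\|_2\le\|A\|_2$ and $\|f\|_2\le\|b\|_2$. Because $\|\delta A\|_2<\sigma_{\min}(A)$, Weyl's inequality gives $\sigma_{\min}(A(\epsilon))\ge\sigma_{\min}(A)-\epsilon\|E\|_2>0$ for $\epsilon\in[0,\varepsilon_L]$. Hence $A(\epsilon)$ keeps full column rank, and the unique minimizer $x(\epsilon)$ (which plays the role of $\bar{\balpha}^\top$ at $\epsilon=\varepsilon_L$ and $\balpha^\top$ at $\epsilon=0$) is characterized by the normal equations $A(\epsilon)^\top A(\epsilon)x(\epsilon)=A(\epsilon)^\top b(\epsilon)$. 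The map $\epsilon\mapsto x(\epsilon)$ is a rational, hence $C^\infty$, function on this interval. Taylor's theorem then gives $x(\varepsilon_L)=x(0)+\varepsilon_L\dot x(0)+O(\varepsilon_L^2)$, so the task reduces to bounding $\|\dot x(0)\|_2$.

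Differentiating the normal equations at $\epsilon=0$ gives
\begin{align*}
\dot x(0)=(A^\top A)^{-1}A^\top(f-Ex)+(A^\top A)^{-1}E^\top \br,
\end{align*}
where $\br=b-Ax$ is the residual and I have used $A^\top\br=0$. I would bound the two pieces separately:
\begin{align*}
\|(A^\top A)^{-1}A^\top\|_2=\frac{1}{\sigma_{\min}(A)},\qquad \|(A^\top A)^{-1}\|_2=\frac{1}{\sigma_{\min}(A)^2}.
\end{align*}
Dividing by $\|x\|_2$ and using $\|E\|_2\le\|A\|_2=\sigma_{\max}(A)$, I get
\begin{align*}
\frac{\|\dot x(0)\|_2}{\|x\|_2}\le\frac{\|b\|_2}{\sigma_{\min}(A)\|x\|_2}+\frac{\sigma_{\max}(A)}{\sigma_{\min}(A)}+\frac{\sigma_{\max}(A)\|\br\|_2}{\sigma_{\min}(A)^2\|x\|_2}.
\end{align*}

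The main obstacle is rewriting this bound in terms of $\theta$ and $\nu$. Since $Ax\perp\br$, Pythagoras gives $\|Ax\|_2=\|b\|_2\cos\theta$ and $\|\br\|_2=\|b\|_2\sin\theta$; the hypothesis $\br\neq0$ gives $\theta>0$, and $\theta<\pi/2$ is given. Substituting these into the first and third terms and using the definition of $\nu$:
\begin{align*}
\frac{\|b\|_2}{\sigma_{\min}(A)\|x\|_2}=\frac{\nu}{\cos\theta},\qquad\frac{\|\br\|_2}{\sigma_{\min}(A)\|x\|_2}=\nu\tan\theta .
\end{align*}
So the third term equals $\nu\tan\theta\,\sigma_{\max}(A)/\sigma_{\min}(A)$. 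Collecting the three terms gives exactly the bracket in the statement. Multiplying by $\varepsilon_L$ and adding the Taylor remainder yields the claimed estimate.

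One care point remains: the remainder constant should not blow up. Since $\sigma_{\min}(A(\epsilon))$ is bounded away from zero on the interval, $\ddot x$ is uniformly bounded there, so the remainder is genuinely $O(\varepsilon_L^2)$. Because this is exactly Theorem 5.3.1 of \cite{Golub2013matrix}, in the paper one may simply cite it rather than reproduce the argument.
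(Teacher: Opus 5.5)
Your argument is correct and is the standard proof of Golub--Van Loan's Theorem 5.3.1, which the paper cites without reproducing: differentiate the normal equations along $A+tE$, $b+tf$, bound $\dot x(0)$ via $\|(A^\top A)^{-1}A^\top\|_2=1/\sigma_{\min}(A)$ and $\|(A^\top A)^{-1}\|_2=1/\sigma_{\min}(A)^2$, and convert using $\|Ax\|_2=\|b\|_2\cos\theta$. Two minor points: obtaining $A^\top A\dot x(0)=A^\top(f-Ex)+E^\top\br$ only requires regrouping $E^\top b-E^\top Ax$, with $A^\top\br=0$ needed only in the Pythagoras step; and for the remainder to be a genuine $O(\varepsilon_L^2)$, state the uniform bound $\sigma_{\min}(A+tE)\ge\sigma_{\min}(A)/2$ for $t\in[0,\varepsilon_L]$ once $\varepsilon_L\le\sigma_{\min}(A)/(2\sigma_{\max}(A))$, rather than mere positivity on each interval.
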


In our setting, $(A,b)$ denotes the least-squares matrix and right-hand side corresponding to the chosen basis functions and collocation points, while $(A+\delta A,b+\delta b)$ represent their finite precision counterparts used in the numerical least-squares solver. The perturbation level $\varepsilon_L$ is therefore controlled by the floating-point accuracy and the stability of the solver. In practice, this implies that the optimization term can be made negligible compared to the approximation and statistical components.

\subsection{Layer Growth}
The layer-growth strategy enriches the RaNN hypothesis space by augmenting the feature set with newly generated neurons at deeper layers. As a result, the approximation error typically decreases as the network capacity increases. On the other hand, a larger number of basis functions may enlarge the statistical component unless the number of collocation points is increased accordingly, as discussed in \cite{Dang2024AGRNN}. In our experiments, we choose sufficiently many collocation points to balance this trade-off.

\section{Numerical Experiments} \label{sec:experiments}

\subsection{Zero Level Set} \label{Sec:zerolevelset}

We first describe how to approximate the zero level set of a scalar-valued level-set function. 
A key advantage of RaNN-based solvers is that the numerical approximation $\phi_\rho$ can be evaluated at arbitrary query points, which avoids interpolation when extracting the zero level set.

\paragraph{Scalar-valued case}
Given a numerical approximation $\phi_\rho(\bX)$ and an initial set of points $\{\bX_i\}_{i=1}^I$ located in a neighborhood of its zero level set, we refine each $\bX_i$ by local perturbations. 
Specifically, for each $i$, we generate a finite set of candidate points $\{\bX_{ij}\}_{j=1}^{J_i}$ in a small neighborhood of $\bX_i$, evaluate $\phi_\rho$ at these candidates, and update
\begin{align*}
\tilde{\bX}_i = \mathop{\arg\min}\limits_{\bX_{ij}}\ |\phi_\rho(\bX_{ij})|.
\end{align*}
Replacing $\bX_i$ by $\tilde{\bX}_i$ and repeating this procedure yields a set of refined points that approximate the zero level set $\{\phi_\rho=0\}$.

\paragraph{Initialization}
The critical step is to construct the initial points $\{\bX_i\}_{i=1}^I$ near the zero level set. We sample a set of candidate points in a slightly enlarged domain. For each candidate point, we examine the signs of $\phi_\rho$ at the point and its local neighbors; if the signs are not all identical in this neighborhood, then the neighborhood likely intersects the zero level set, and the candidate point is selected as an initial point. In two and three dimensions, MATLAB routines such as \texttt{contour} and \texttt{isosurface} can also be used for visualization and initialization.

\paragraph{Illustrative example}
Consider $\phi_\rho(\bx)=\sin(\pi x_1)-x_2$ on $[0,1]\times[0,1]$. We first sample a uniform grid on the enlarged domain $[-0.1,1.1]\times[-0.1,1.1]$ with $50$ points in each direction. Starting from the detected initial points, we set $J_i=9$ and generate $\{\bx_{ij}\}_{j=1}^9$ by the tensor-product stencil
\begin{align*}
(x_{i1}-h_1,x_{i1},x_{i1}+h_1)\times(x_{i2}-h_2,x_{i2},x_{i2}+h_2),
\end{align*}
where $h_1$ and $h_2$ are the grid spacings. In each iteration, the step size \(\bh=(h_1,h_2)\) is reduced by a factor of two. After five iterations, the resulting points lie in a small neighborhood of the zero level set. Figure \ref{fig:LevelSet} illustrates the refinement process.

\paragraph{Vector-valued case}
For a vector-valued level-set function $\bphi_\rho(\bX)\in\Real^k$, the zero level set corresponds to the intersection of the zero level sets of its components. In practice, we use the same refinement procedure as in the scalar case, but replace the scalar objective by a vector norm:
\begin{align*}
\tilde{\bX}_i=\mathop{\arg\min}\limits_{\bX_{ij}}\ \|\bphi_\rho(\bX_{ij})\|_{\ell^2}.
\end{align*}
For initialization, instead of intersecting separately selected point sets (which can be unstable), we select candidate points satisfying a small-residual criterion such as $\|\bphi_\rho(\bX)\|_{\ell^2}\le \varepsilon_0$ together with a local sign-change test applied componentwise.

\begin{figure}[!htbp]
    \centering
    \includegraphics[width=0.16\textwidth]{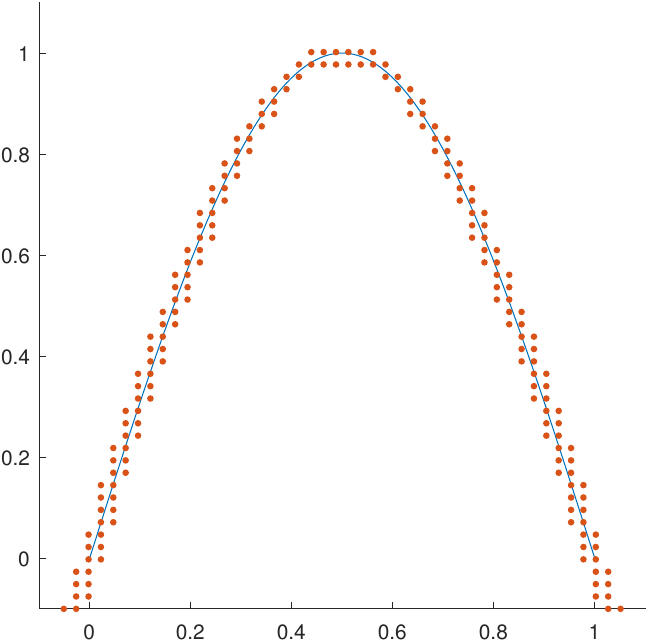}
    \includegraphics[width=0.16\textwidth]{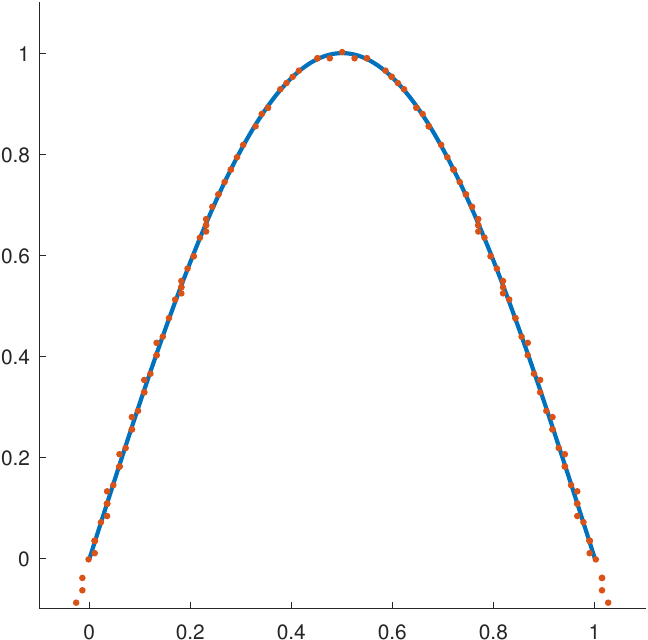}
    \includegraphics[width=0.16\textwidth]{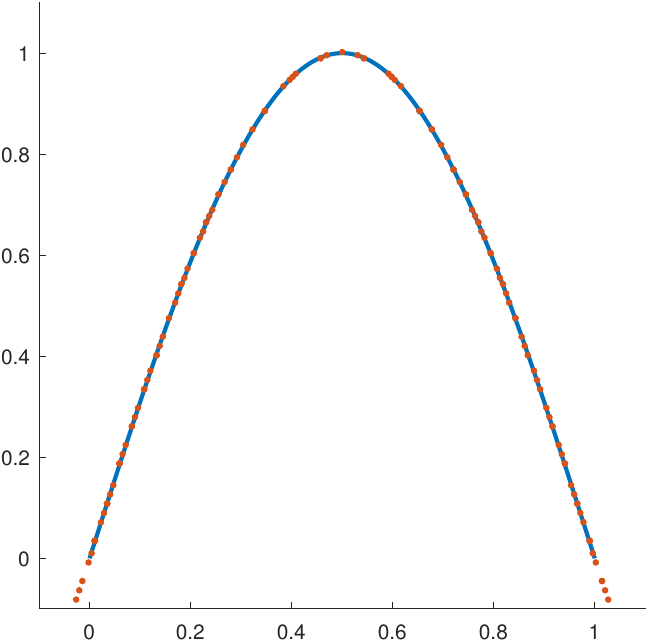}
    \includegraphics[width=0.16\textwidth]{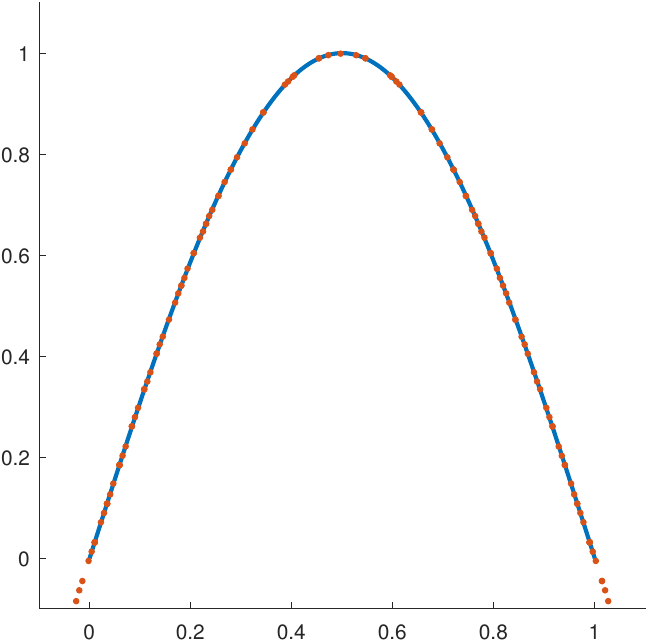}
    \includegraphics[width=0.16\textwidth]{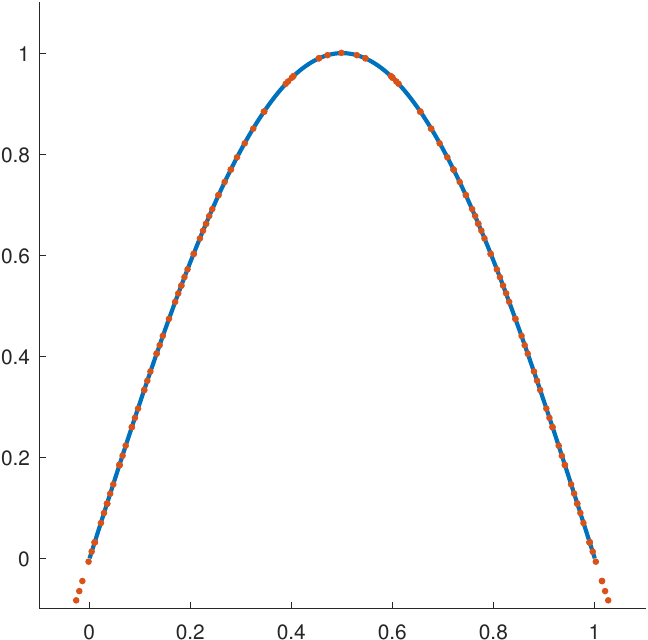}
    \includegraphics[width=0.16\textwidth]{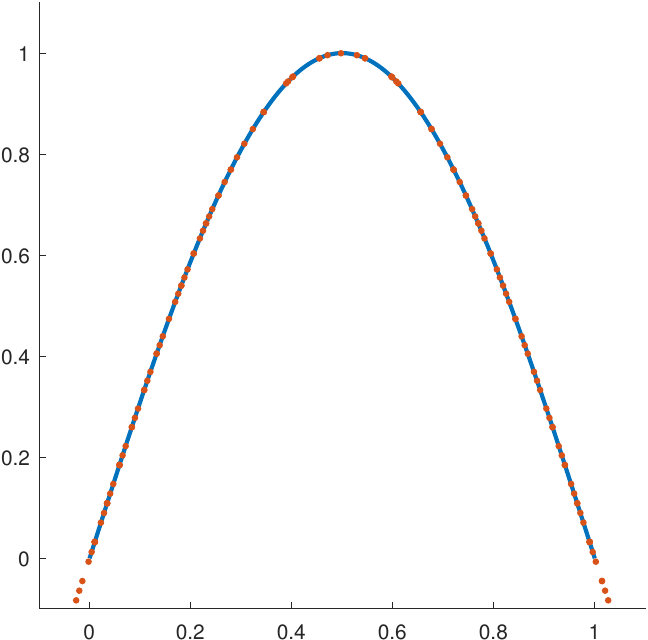}
    \caption{The iterative method to find the zero level set. }
    \label{fig:LevelSet}
\end{figure}

\subsection{Examples}

All experiments are conducted in MATLAB. To ensure reproducibility, we fix the random seed by \texttt{rng(1)}. The least-squares problems are solved by a QR-based routine (MATLAB \texttt{linsolve} with \texttt{opts.RECT=true}). In all examples, we use the Gaussian activation $\rho(x)=\exp(-x^2/2)$ in each layer. Since the basis functions are fixed, we compute the required differential operators using analytical derivatives, avoiding automatic differentiation and finite-difference approximations, which improves efficiency while maintaining high accuracy.

Our adaptive strategy concentrates collocation points in the tube $\Lambda_A$ around the zero level set. As a result, accuracy may deteriorate away from $\Lambda_A$. Therefore, when visualizing the solution, we restrict attention to points that are sufficiently close to $\Lambda_A$. To identify such points efficiently, we employ a $k$-nearest neighbor search (MATLAB \texttt{knnsearch}) to estimate the distance from plotting points to the collocation set.

The main parameters used in the experiments are summarized as follows:
\begin{itemize}
    \item $N^I$ and $N^B$ denote the numbers of interior and boundary collocation points.
    \item $\mathrm{E}$ and $\mathrm{Var}$ denote the mean and variance parameters used for the initial normal sampling in the adaptive-collocation procedure.
    \item $N$ is the total number of candidate points used to update interior/boundary collocation sets, $\varepsilon_A$ is the tube width defining $\Lambda_A=\{|\phi_\rho^1|\le \varepsilon_A\}$, and $N_A^I$, $N_A^B$ are the number of updated interior/boundary collocation points.
    \item $T$ is the final time. We distinguish the target domain (where accuracy is assessed/visualized) from a larger computational domain $\Omega$ used to update collocation points, as described in Subsection \ref{Sec:AdaptPoint}.
    \item $m_l$ and $\br_l$ denote the number of neurons and the uniform-sampling range for inner parameters at the $l$-th layer. For all examples, $m_1=2000$ and $m_2=m_3=1000$.
    \item $t_1$, $t_2$, and $t_3$ denote the running times using (i) normal collocation, (ii) adaptive collocation $\Lambda_A$, and (iii) layer growth, respectively.
\end{itemize}
Unless otherwise stated, red curves/surfaces indicate the reference (exact) solutions.

\begin{example}[1D inviscid forced Burgers' Equation] \label{ex1}
We consider the following 1D inviscid forced Burgers' equation:
\begin{align}
    \partial_t u + u\partial_x u + \partial_x V &= 0, \quad(t,x)\in(0,\infty)\times\Real,\\
    u(0,x) &= u_0(x),\quad x\in\Real,
\end{align}
where $V(x)$ is the potential. We explore four cases, varying $V(x)$ and $u_0(x)$, which were tested in \cite{Jin2003Levelset}. For all cases, we apply layer growth in the third-row experiments and the first two rows use single-layer RaNN baselines.
\end{example}

We present all parameters for each case in Table \ref{tab:ex1_parameter}.
\begin{table}[!htbp]
    \centering
    \begin{tabular}{|c|c|c|c|c|c|c|c|c|c|c|c|c|c|c|c|}
    \hline
     & $N^I$ & $N^B$ & E & Var & $N$ & $\varepsilon_A$ & T & $\Omega$ & $\br_1$ & $\br_2$ \\ \hline
    \textbf{Case 1} & \multirow{4}{*}{64000} & \multirow{4}{*}{5000} & (0,0) & (1,1) & \multirow{4}{*}{$51^3$} & 0.4 & 1 & $[-1,1]\times[-1,1]$ & \multirow{4}{*}{(3,3,3)} & \multirow{4}{*}{(5,5,5)} \\ \cline{1-1} \cline{4-5} \cline{7-9}
    \textbf{Case 2} &  &  & (0,0) & (1.5,1.5) &  & 0.2 & 1 & $[-1,1]\times[-1.1,1.1]$ &  &  \\ \cline{1-1} \cline{4-5} \cline{7-9}
    \textbf{Case 3} &  &  & (0,0.5) & (2,1.5) &  & 0.6 & 1 & $[-1.3,1.3]\times[-0.3,1.3]$ &  &  \\ \cline{1-1} \cline{4-5} \cline{7-9}
    \textbf{Case 4} &  &  & (0,0) & (1.5,1.5) &  & 0.9 & $\pi$ & $[-2,2]\times[-2,2]$ &  &  \\ \hline    
    \end{tabular}
    \caption{Parameters used in Example \ref{ex1}. }
    \label{tab:ex1_parameter}
\end{table}

\noindent\textbf{Case 1: The continuous initial data.}

In this case, the initial condition is
\begin{align}
    u_0(x)=-\sin(\pi x)
\end{align}
and the potential is $V(x)=0$. The numerical results are shown in Figure \ref{fig:ex1_case11}.

\begin{figure}[!htbp]
    \centering
    \includegraphics[width=0.20\textwidth]{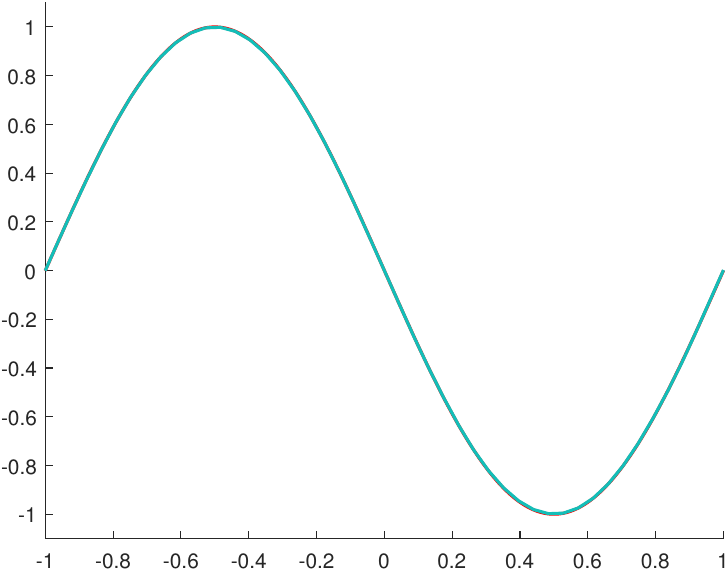}
    \includegraphics[width=0.20\textwidth]{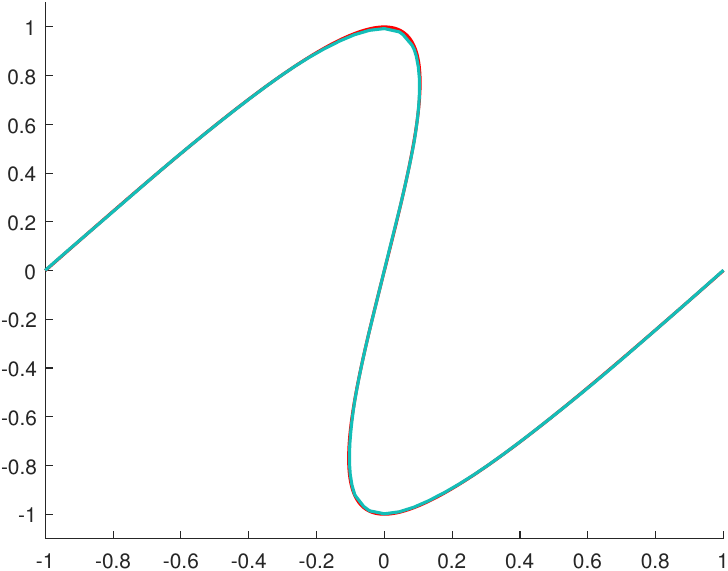}
    \includegraphics[width=0.20\textwidth]{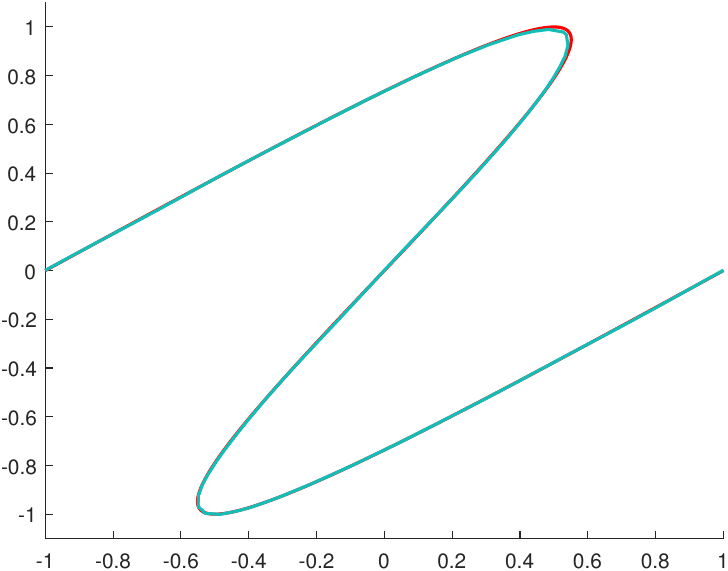}
    \includegraphics[width=0.20\textwidth]{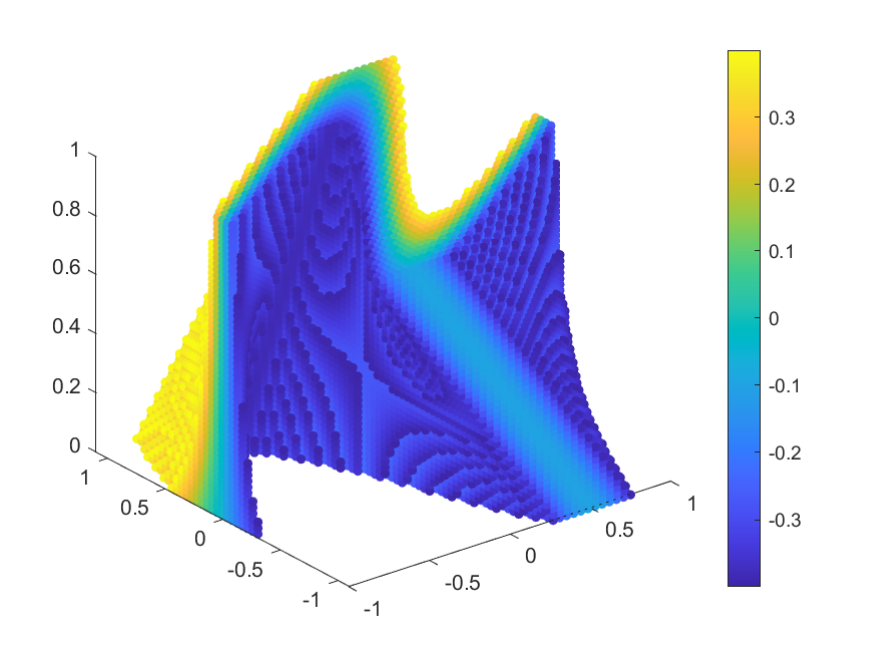}
    \includegraphics[width=0.20\textwidth]{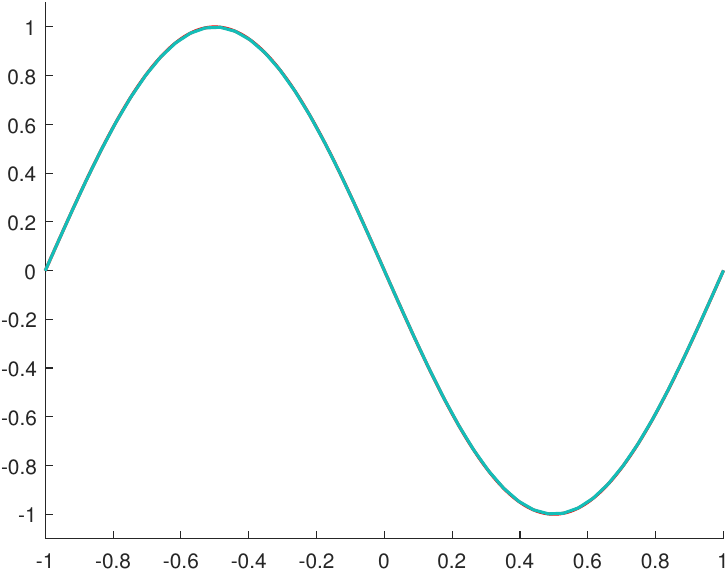}
    \includegraphics[width=0.20\textwidth]{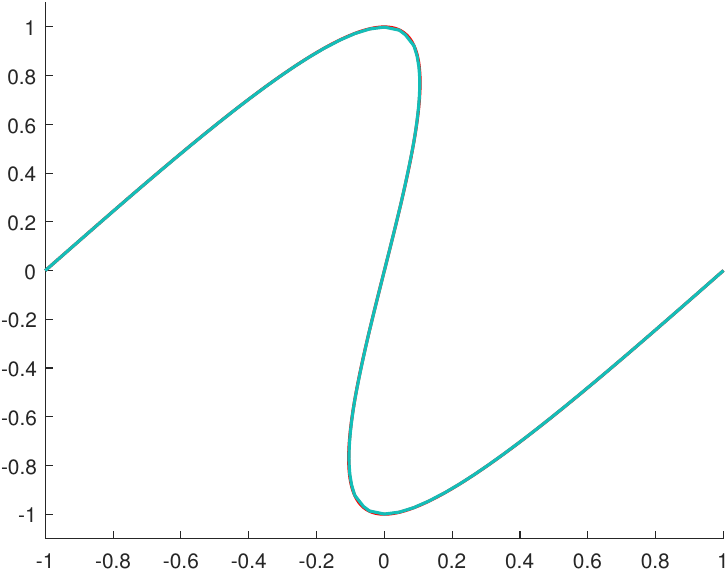}
    \includegraphics[width=0.20\textwidth]{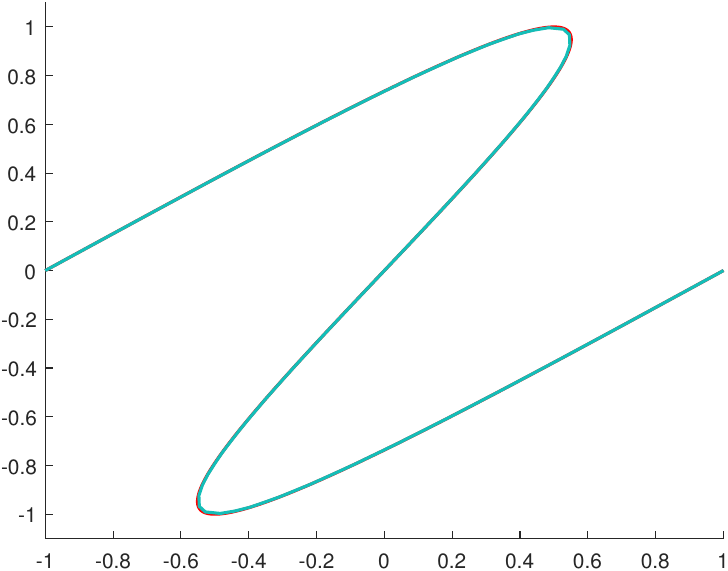}
    \includegraphics[width=0.20\textwidth]{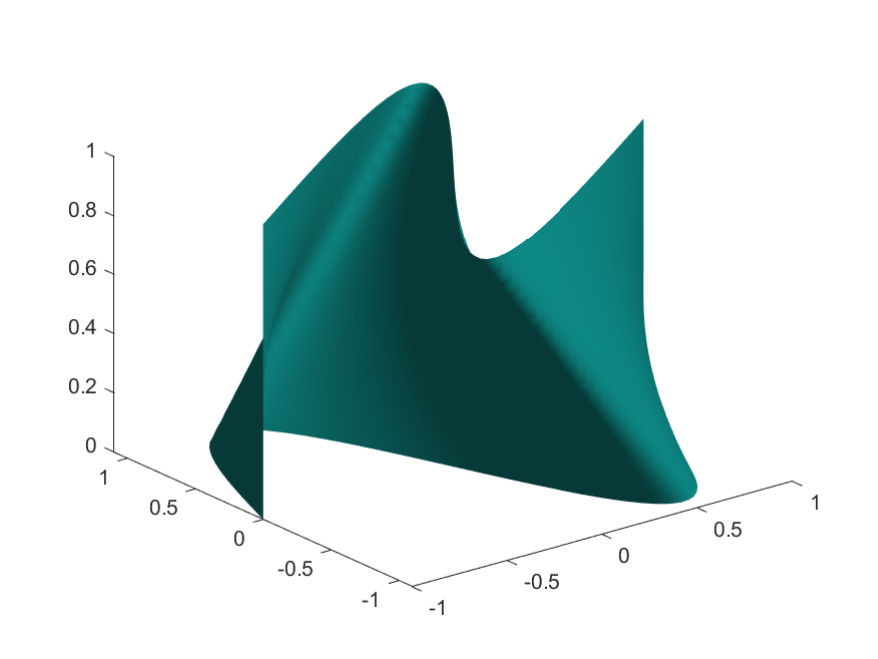}
    \includegraphics[width=0.20\textwidth]{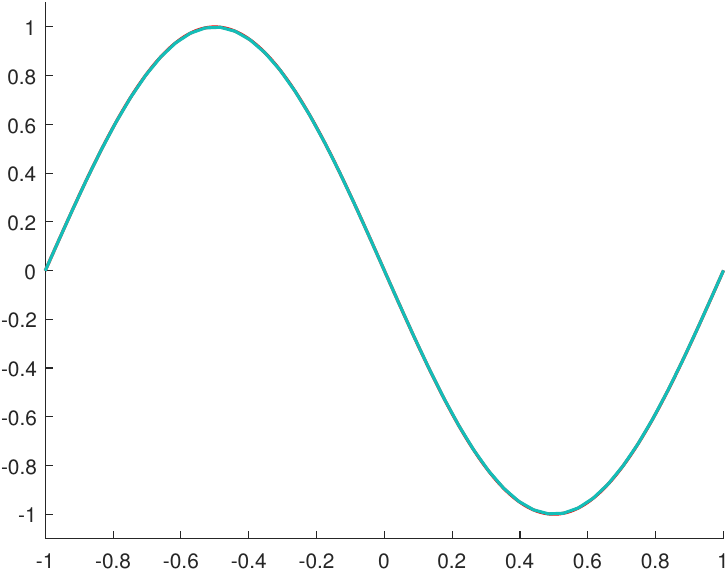}
    \includegraphics[width=0.20\textwidth]{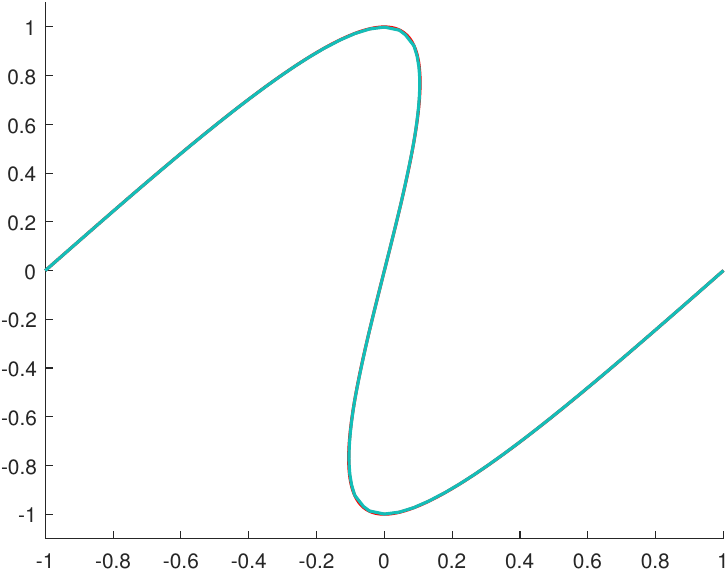}
    \includegraphics[width=0.20\textwidth]{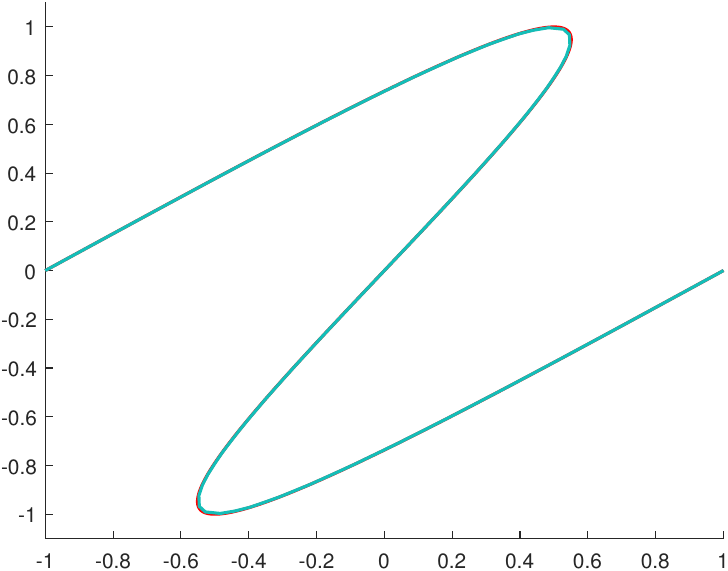}
    \includegraphics[width=0.20\textwidth]{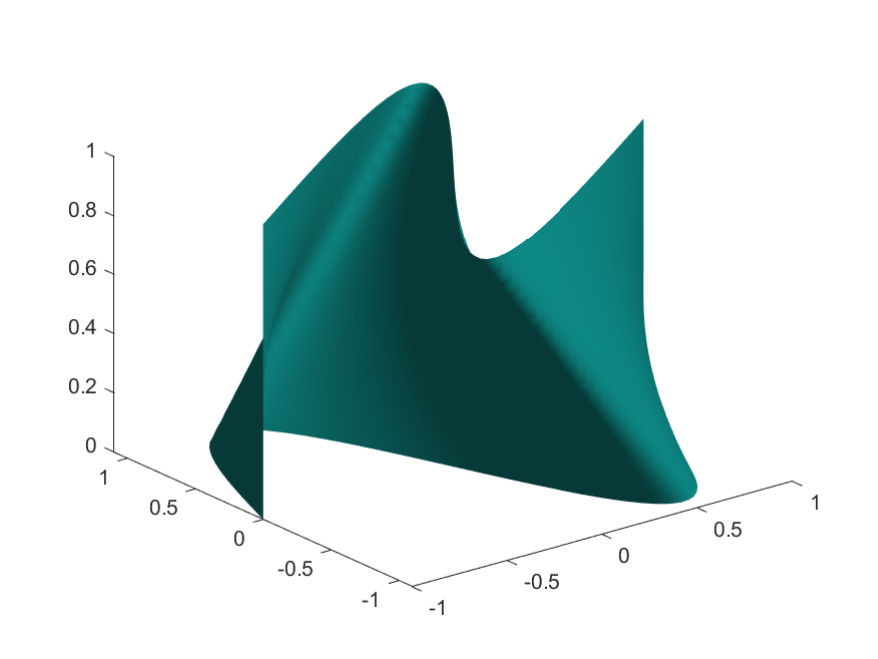}
    \caption{AG-RaNN method results for Case 1 of Example \ref{ex1}. Columns 1-3 correspond to $t=0,0.5,1$, and column 4 shows the zero level set. Rows (top to bottom) use normal collocation points, collocation set $\Lambda_A$ ($N_A^I=39784$, $N_A^B=836$), and the layer growth strategy, respectively. Running times: $(t_1,t_2,t_3)=(2.45,1.63,7.23)$.}
    \label{fig:ex1_case11}
\end{figure}

\noindent\textbf{Case 2: The rarefaction initial data.}

In this case, the initial condition is
\begin{align}
    u_0(x)=\begin{cases}
        -1,&x<0\\
        1,&x>0
    \end{cases}
\end{align}
and the potential is $V(x)=0$. The numerical results are shown in Figure \ref{fig:ex1_case2}.

\begin{figure}[!htbp]
    \centering
    \includegraphics[width=0.20\textwidth]{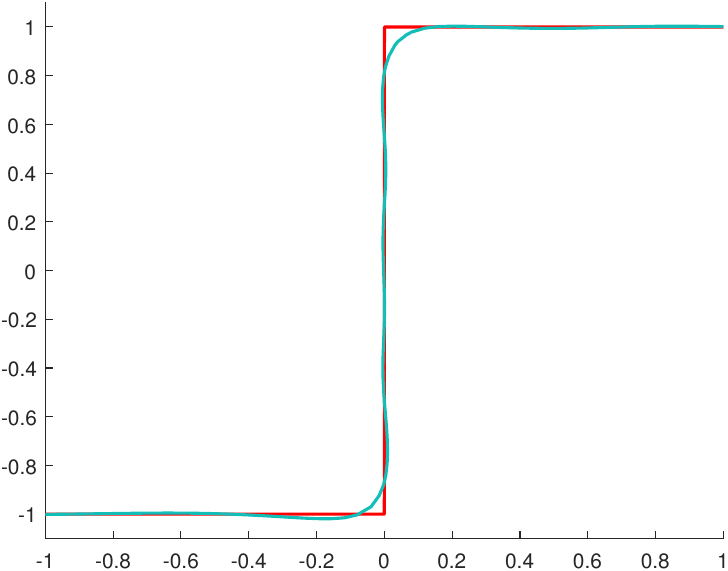}
    \includegraphics[width=0.20\textwidth]{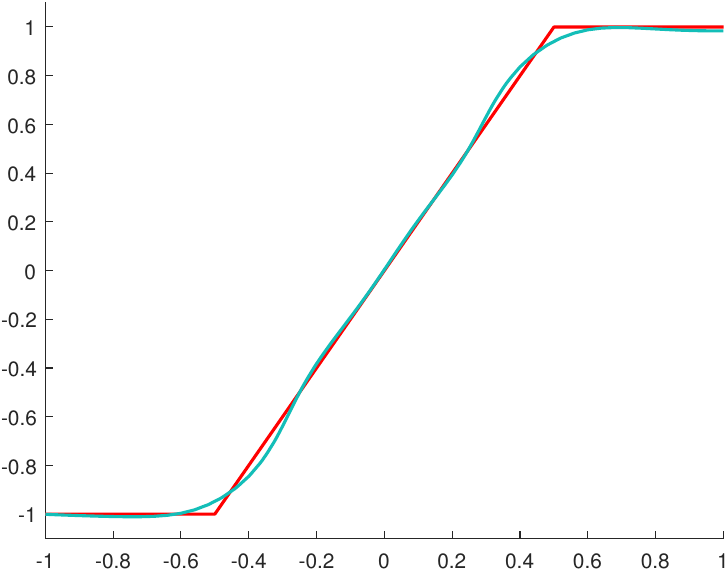}
    \includegraphics[width=0.20\textwidth]{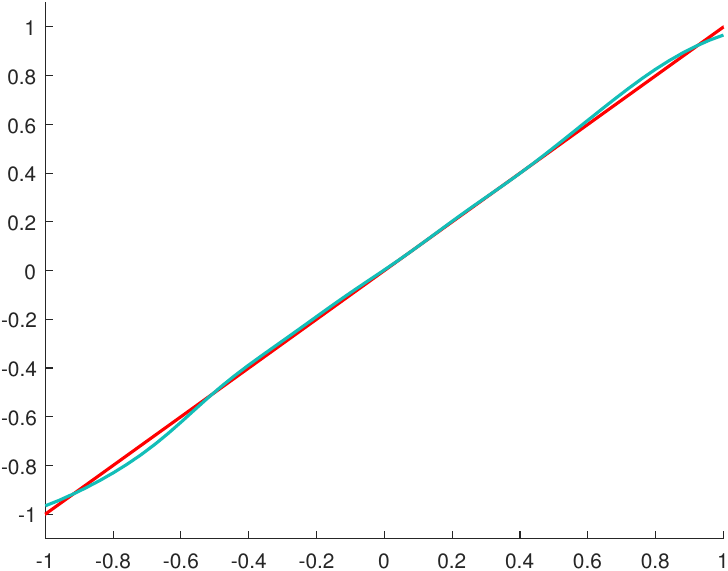}
    \includegraphics[width=0.20\textwidth]{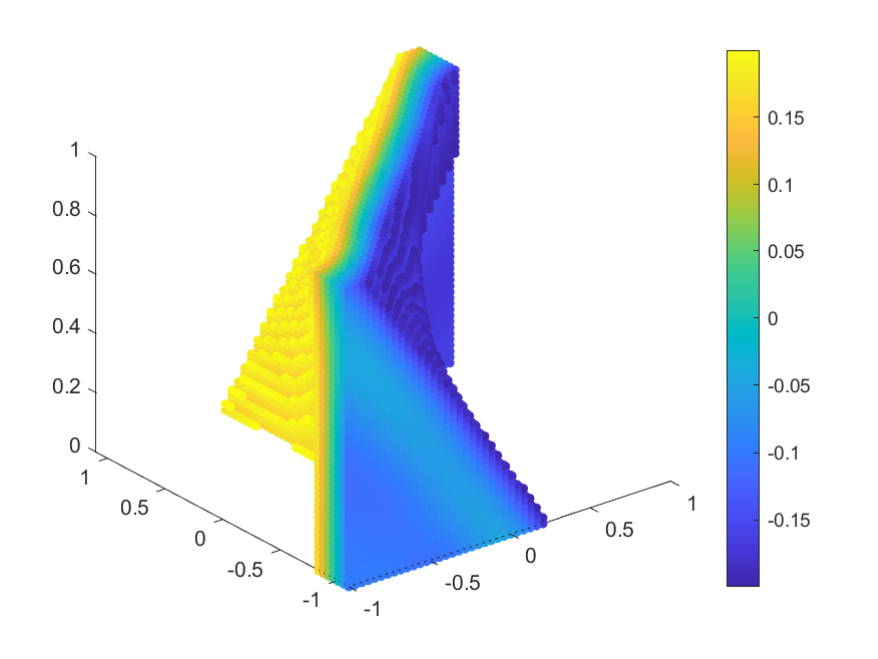}
    \includegraphics[width=0.20\textwidth]{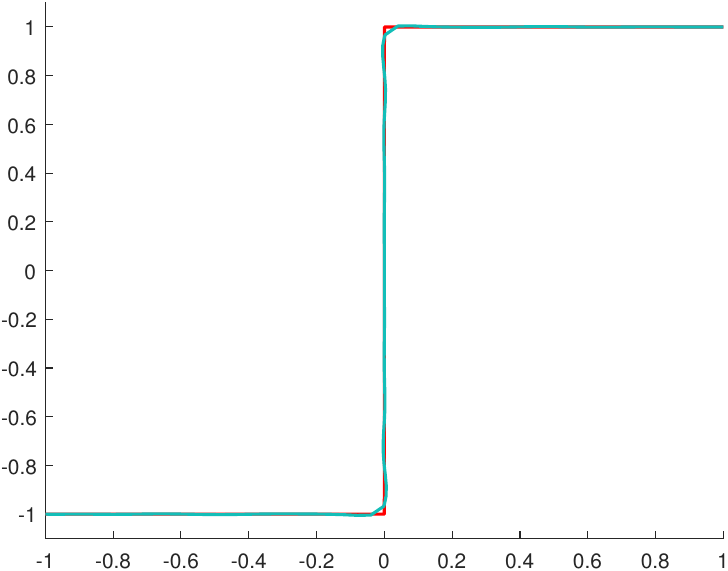}
    \includegraphics[width=0.20\textwidth]{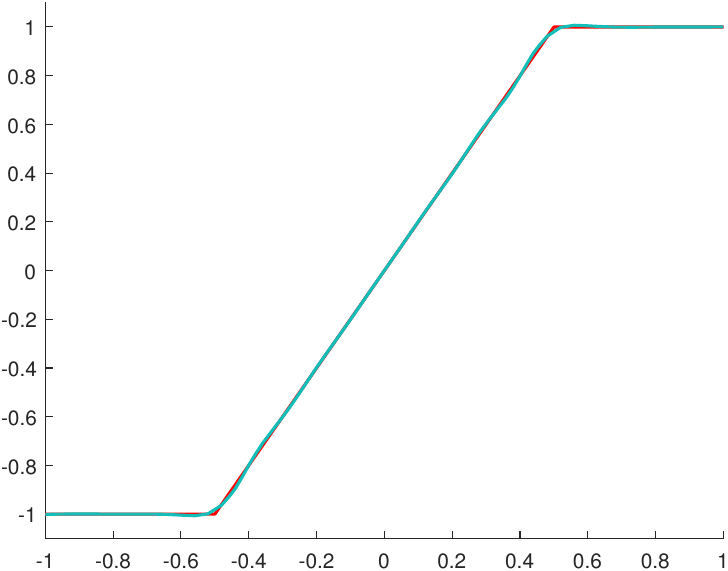}
    \includegraphics[width=0.20\textwidth]{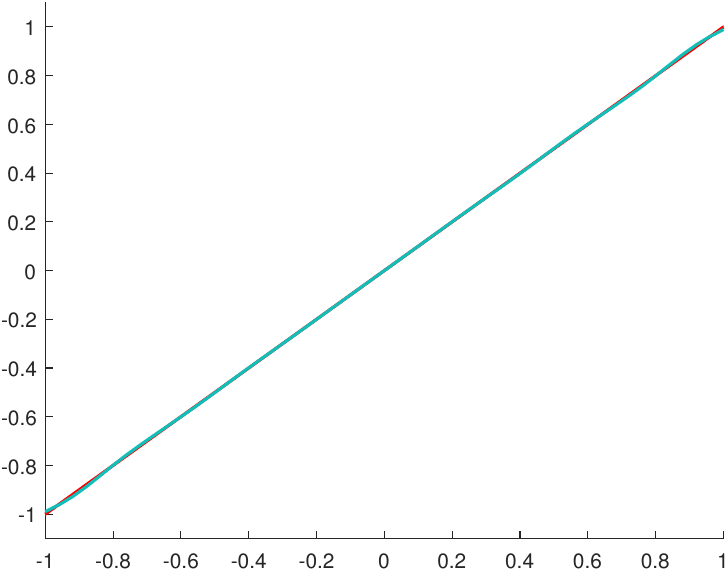}
    \includegraphics[width=0.20\textwidth]{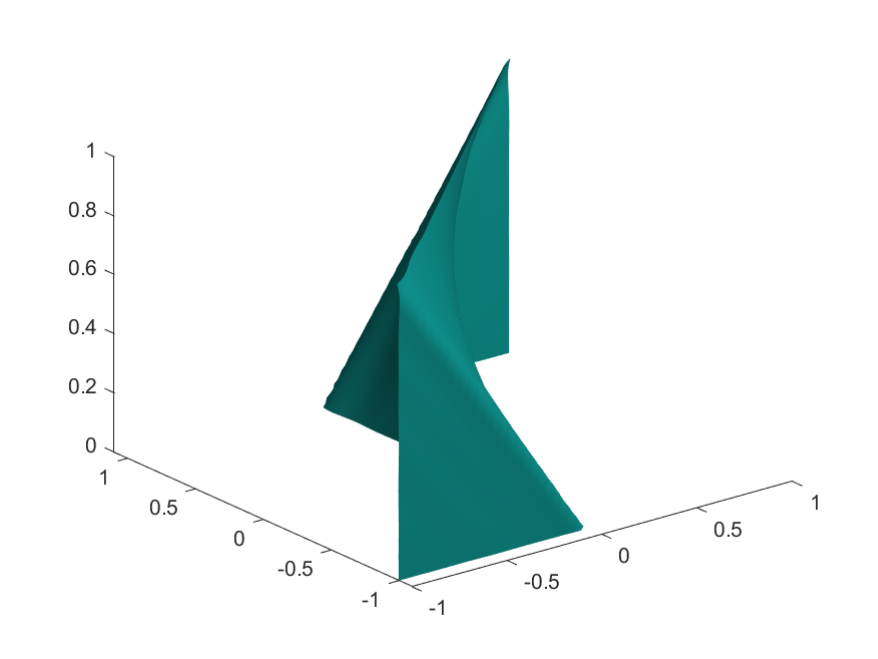}
    \includegraphics[width=0.20\textwidth]{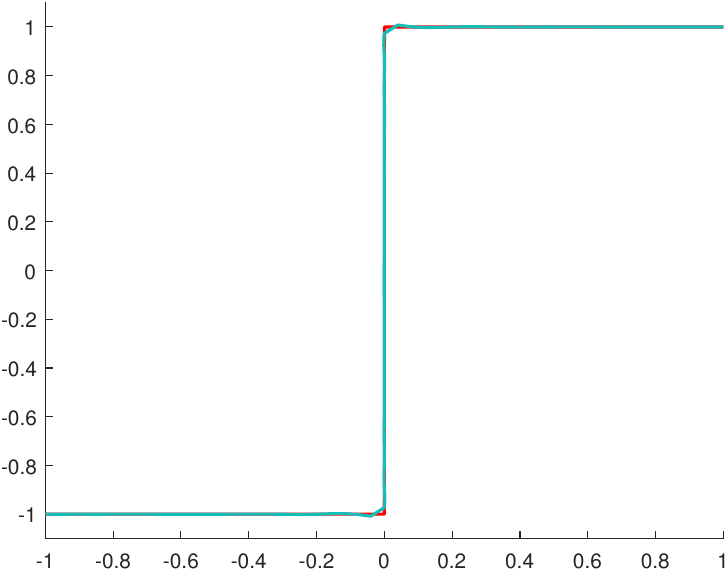}
    \includegraphics[width=0.20\textwidth]{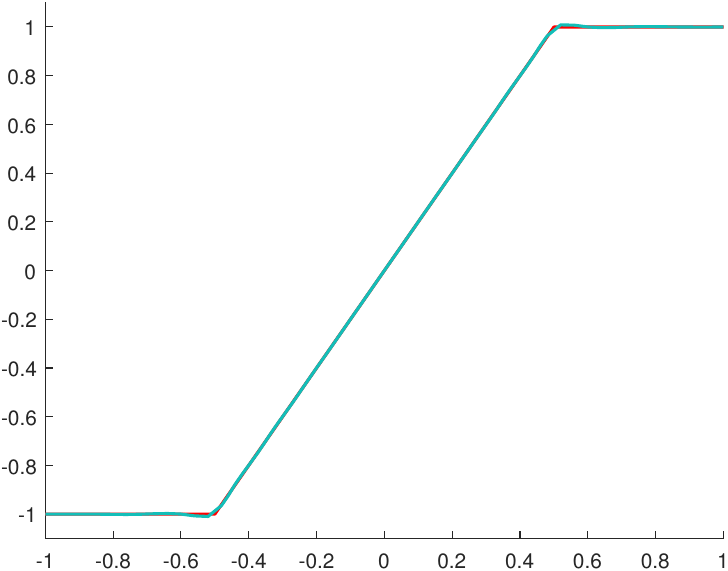}
    \includegraphics[width=0.20\textwidth]{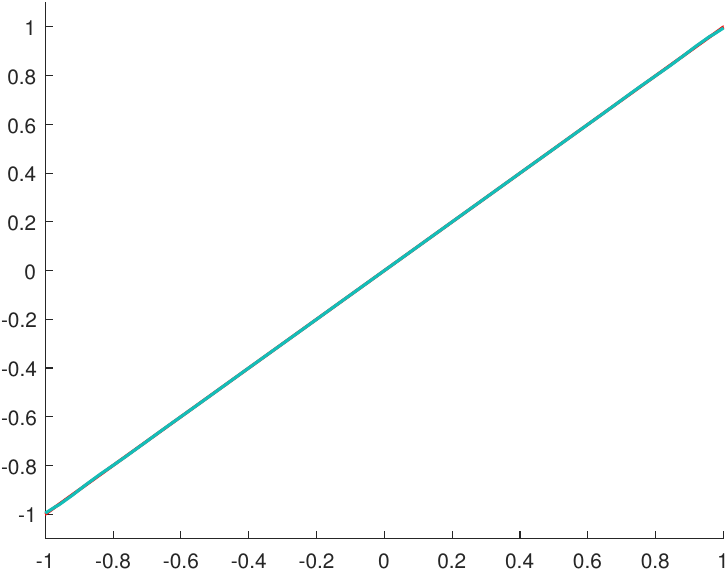}
    \includegraphics[width=0.20\textwidth]{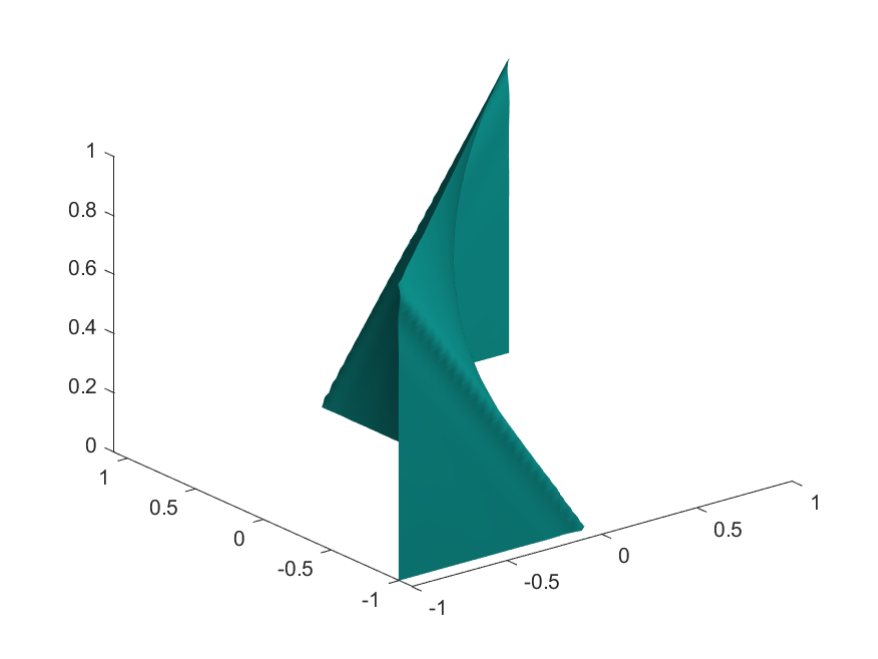}
    \caption{AG-RaNN method results for Case 2 of Example \ref{ex1}. Columns 1-3 correspond to $t=0,0.5,1$, and column 4 shows the zero level set. Rows (top to bottom) use normal collocation points, collocation set $\Lambda_A$ ($N_A^I=33051$, $N_A^B=799$), and the layer growth strategy, respectively. Running times: $(t_1,t_2,t_3)=(2.47,1.38,8.70)$.}
    \label{fig:ex1_case2}
\end{figure}

\noindent\textbf{Case 3: The shock initial data.}

In this case, the initial condition is
\begin{align}
    u_0(x)=\begin{cases}
        1,&x<0\\
        0,&x>0
    \end{cases}
\end{align}
and the potential is $V(x)=0$. The numerical results are shown in Figure \ref{fig:ex1_case3}. 
\begin{figure}[!htbp]
    \centering
    \includegraphics[width=0.20\textwidth]{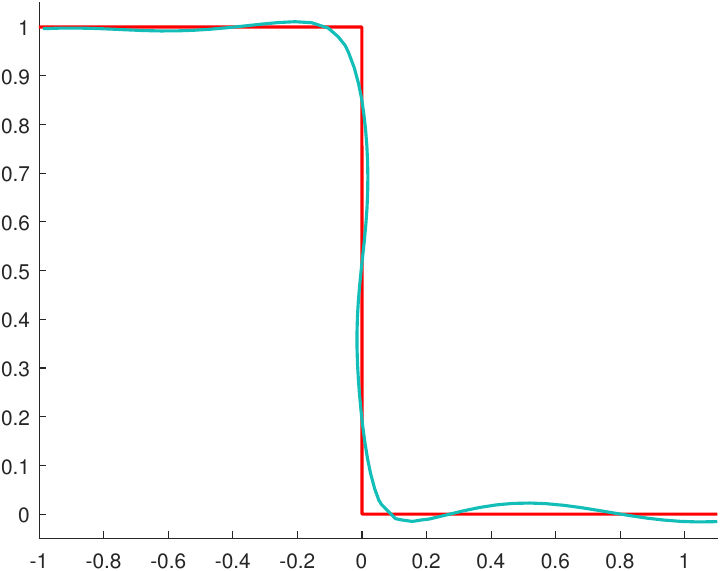}
    \includegraphics[width=0.20\textwidth]{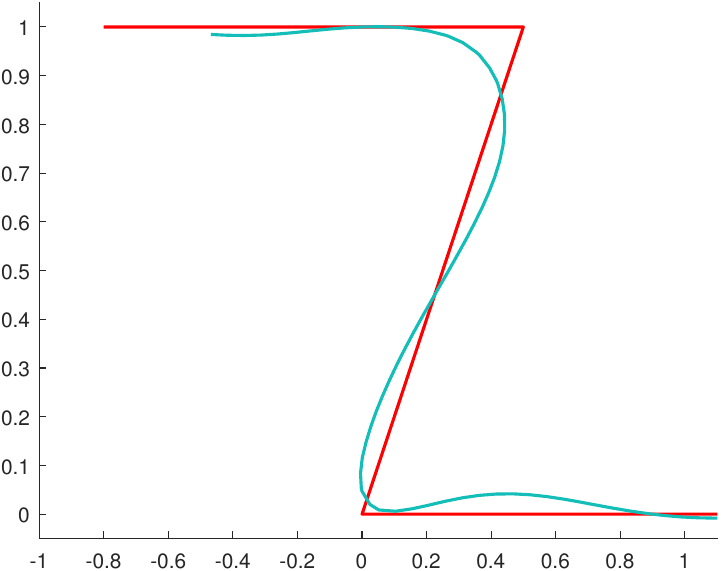}
    \includegraphics[width=0.20\textwidth]{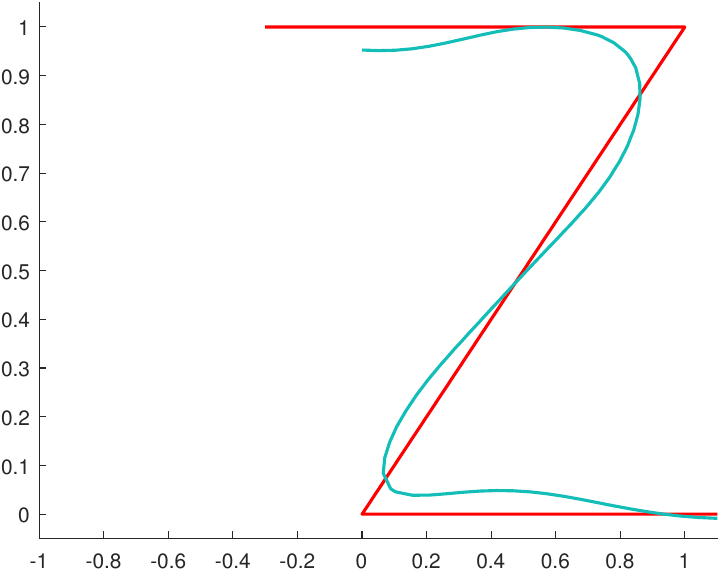}
    \includegraphics[width=0.20\textwidth]{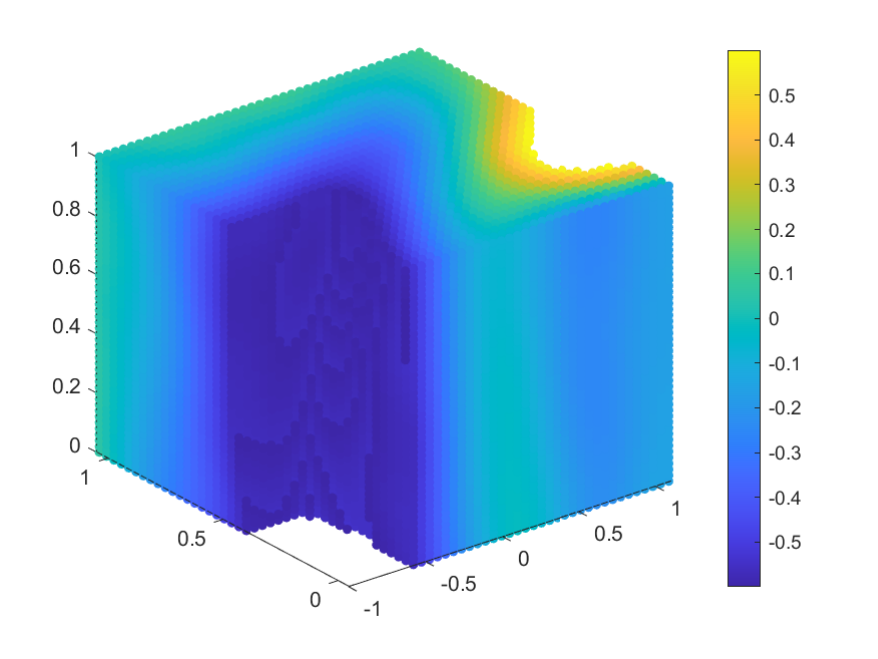}
    \includegraphics[width=0.20\textwidth]{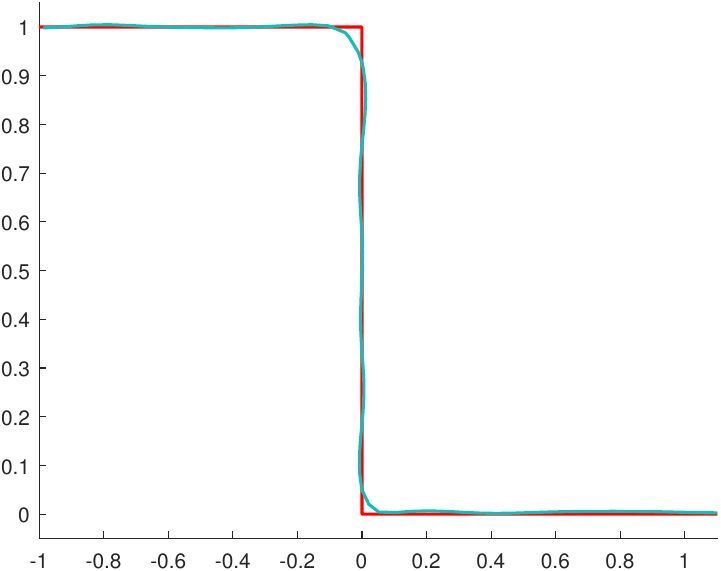}
    \includegraphics[width=0.20\textwidth]{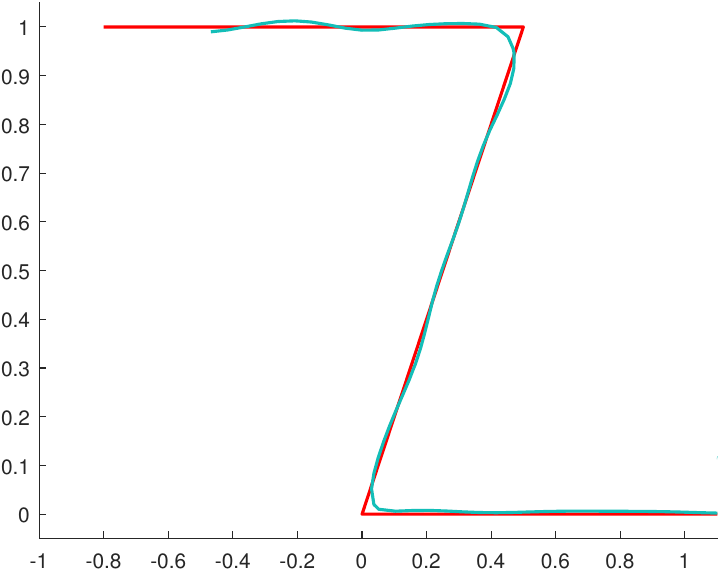}
    \includegraphics[width=0.20\textwidth]{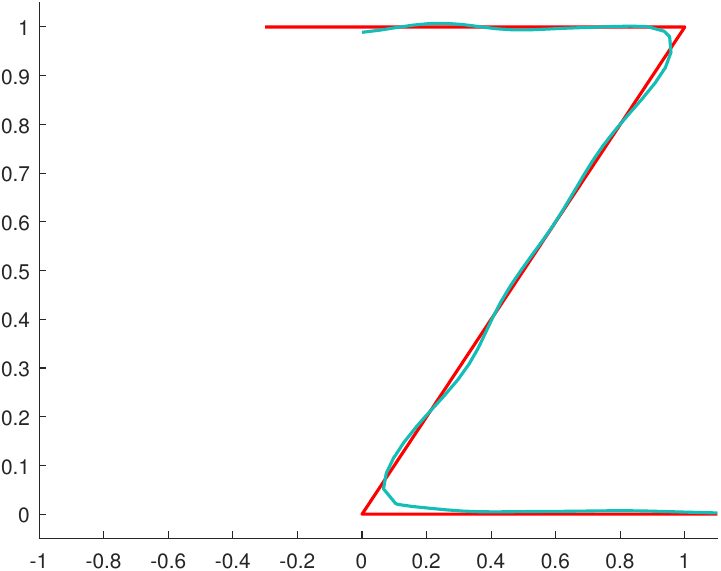}
    \includegraphics[width=0.20\textwidth]{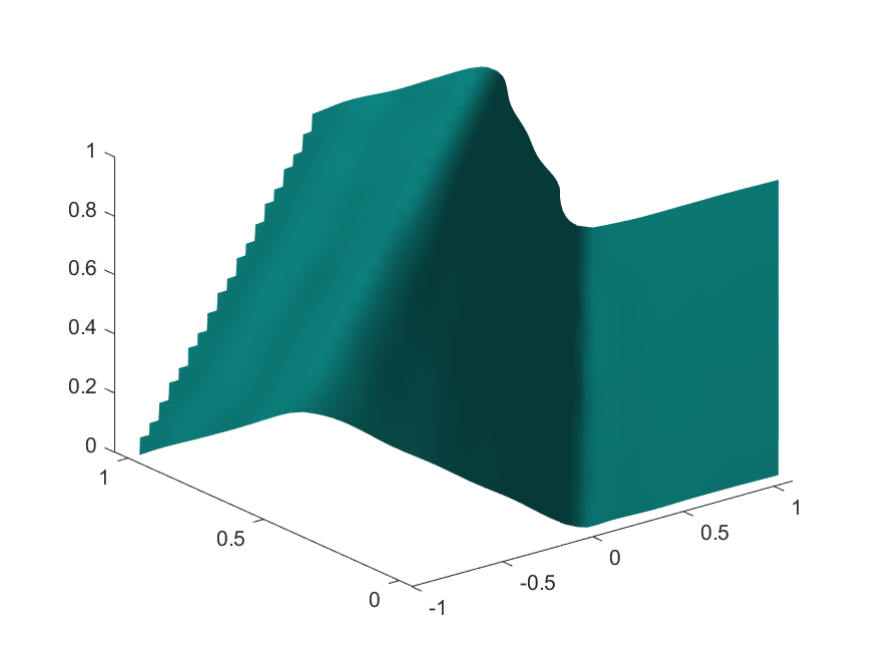}
    \includegraphics[width=0.20\textwidth]{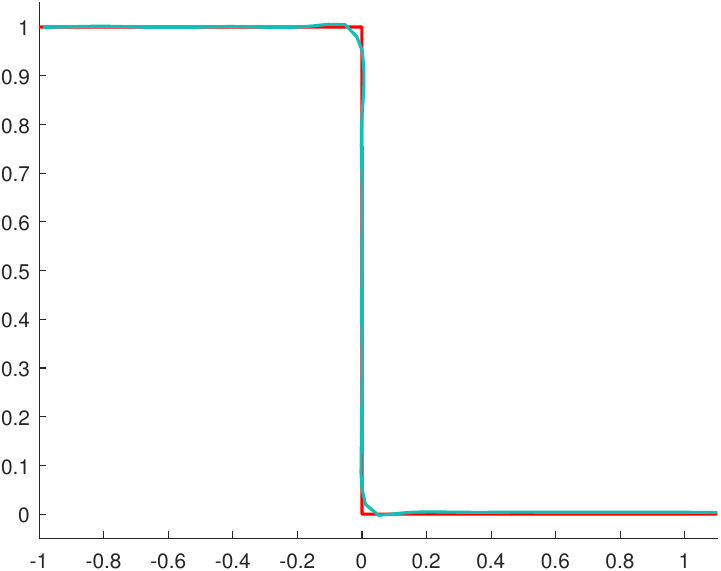}
    \includegraphics[width=0.20\textwidth]{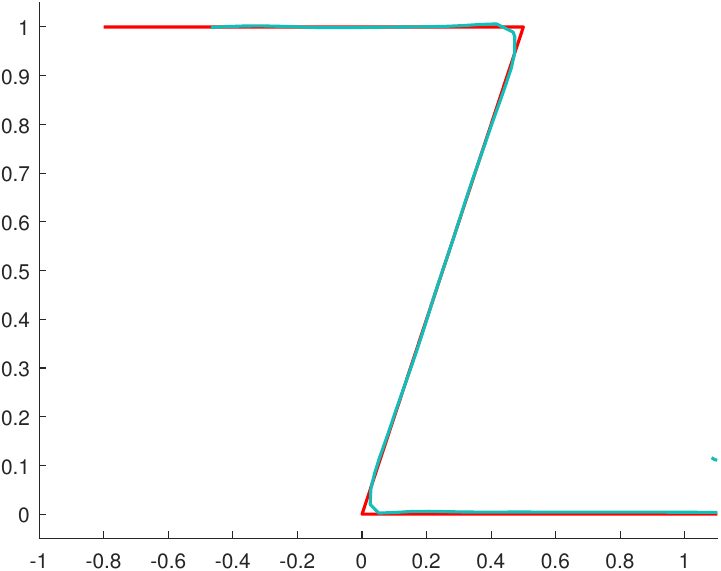}
    \includegraphics[width=0.20\textwidth]{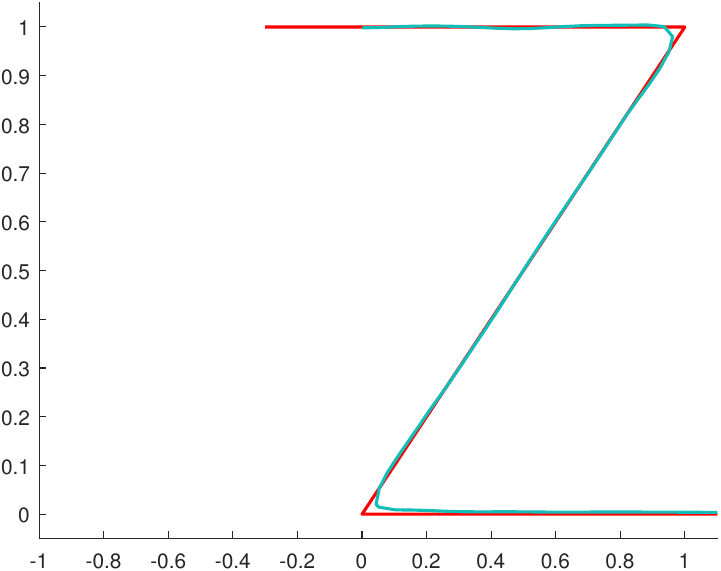}
    \includegraphics[width=0.20\textwidth]{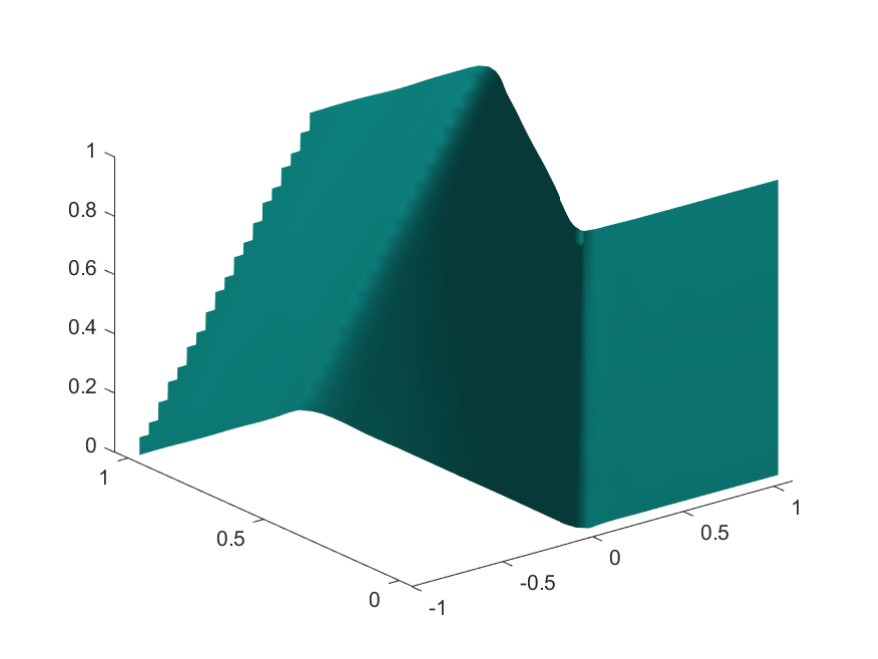}
    \caption{AG-RaNN method results for Case 3 of Example \ref{ex1}. Columns 1-3 correspond to $t=0,0.5,1$, and column 4 shows the zero level set. Rows (top to bottom) use normal collocation points, collocation set $\Lambda_A$ ($N_A^I=96122$, $N_A^B=1685$), and the layer growth strategy, respectively. Running times: $(t_1,t_2,t_3)=(2.47,5.52,23.28)$.}
    \label{fig:ex1_case3}
\end{figure}

\noindent\textbf{Case 4: The harmonic oscillator.}

In this case, the initial condition is
\begin{align}
    u_0(x)=-\tanh(5x)
\end{align}
and the potential is $V(x)=\frac{1}{2}x^2$. The numerical results are shown in Figure \ref{fig:ex1_case4}.

\begin{figure}[!htbp]
    \centering
    \includegraphics[width=0.20\textwidth]{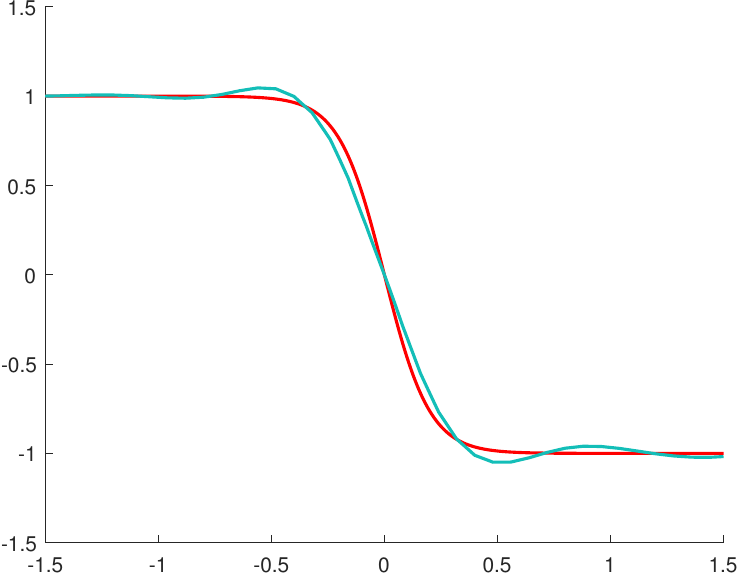}
    \includegraphics[width=0.20\textwidth]{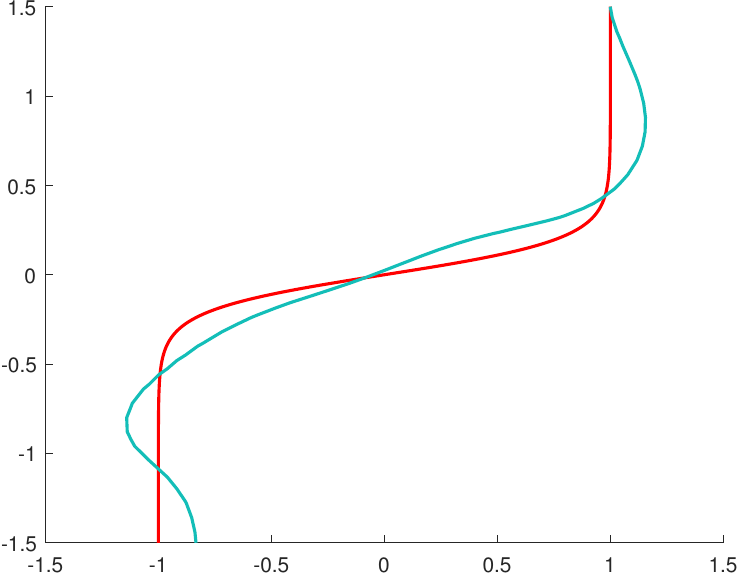}
    \includegraphics[width=0.20\textwidth]{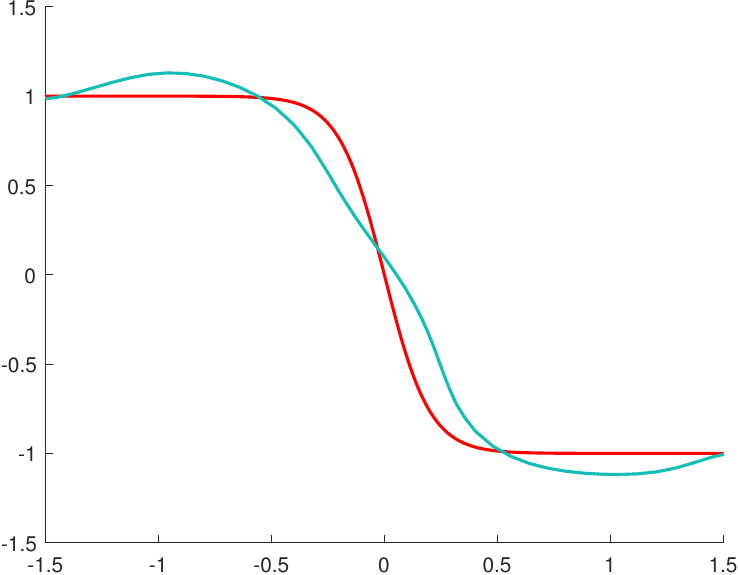}
    \includegraphics[width=0.20\textwidth]{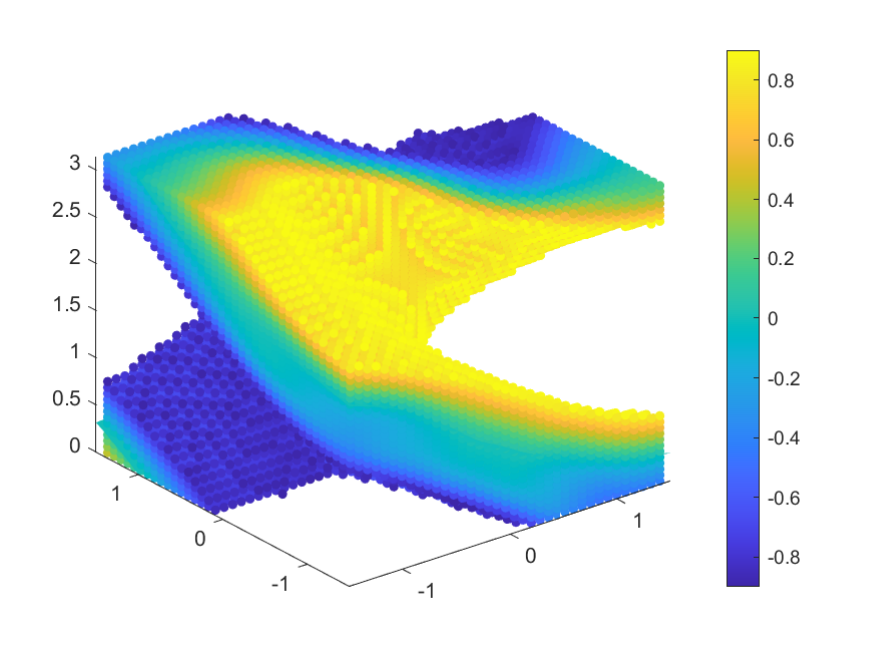}
    \includegraphics[width=0.20\textwidth]{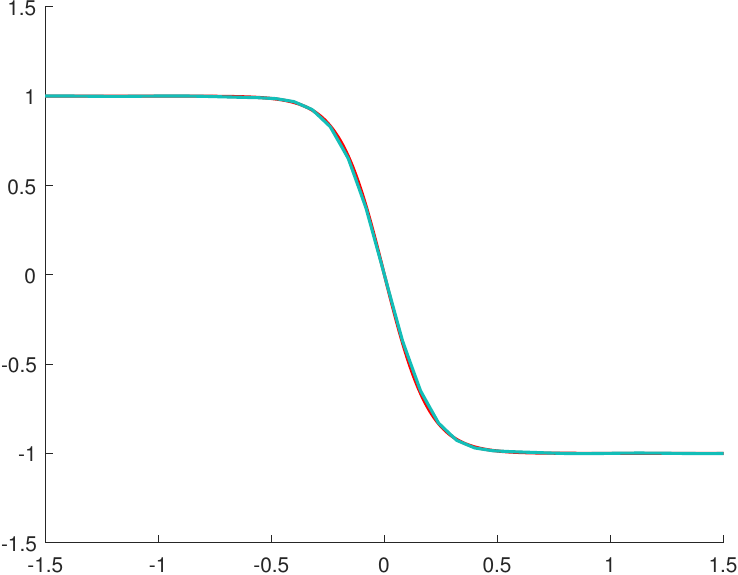}
    \includegraphics[width=0.20\textwidth]{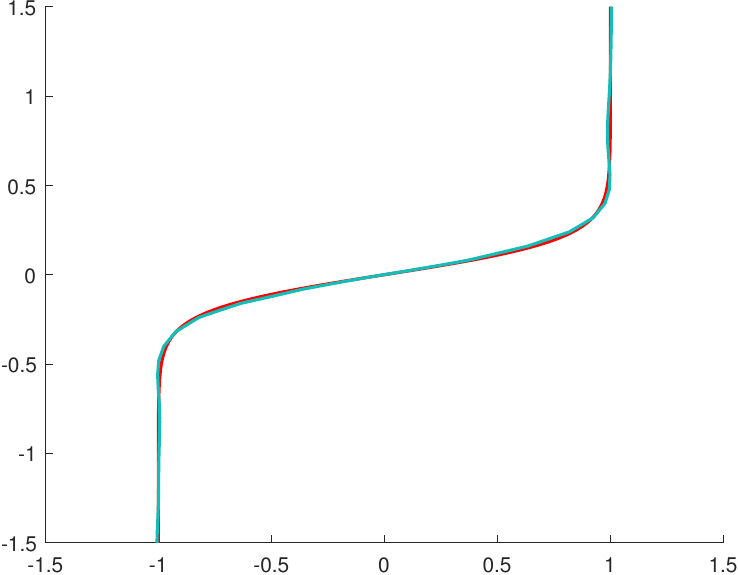}
    \includegraphics[width=0.20\textwidth]{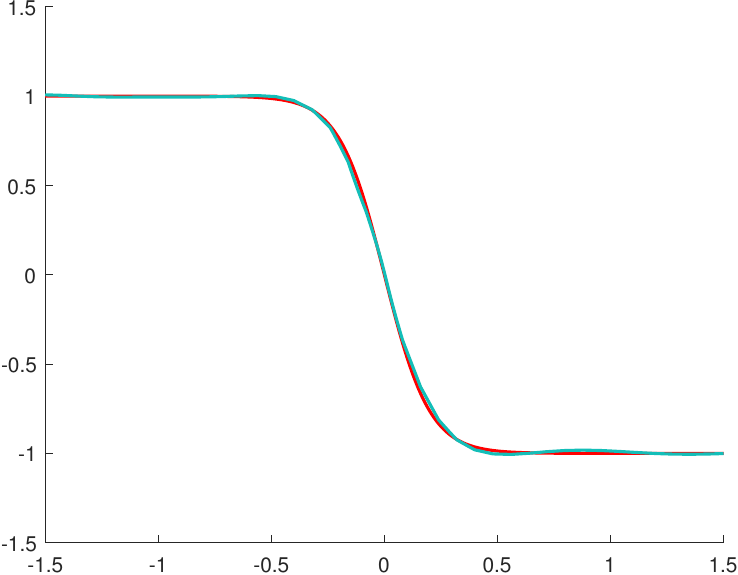}
    \includegraphics[width=0.20\textwidth]{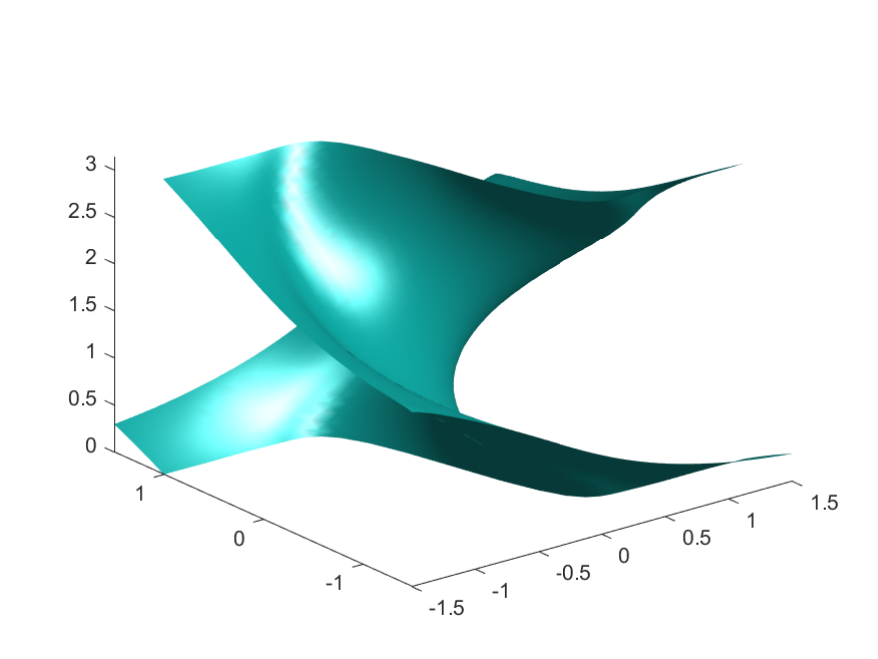}
    \includegraphics[width=0.20\textwidth]{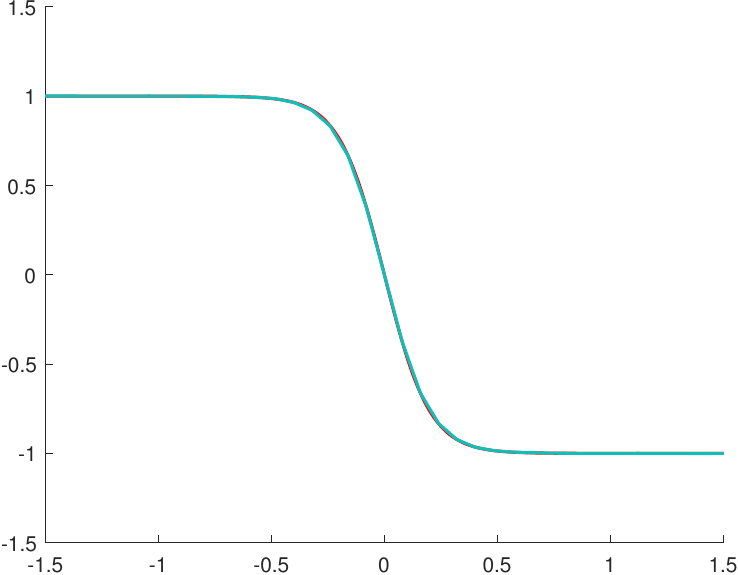}
    \includegraphics[width=0.20\textwidth]{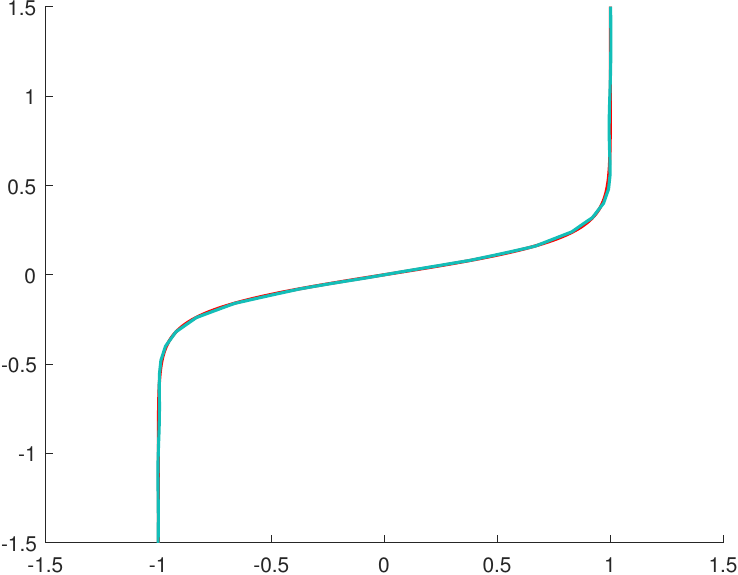}
    \includegraphics[width=0.20\textwidth]{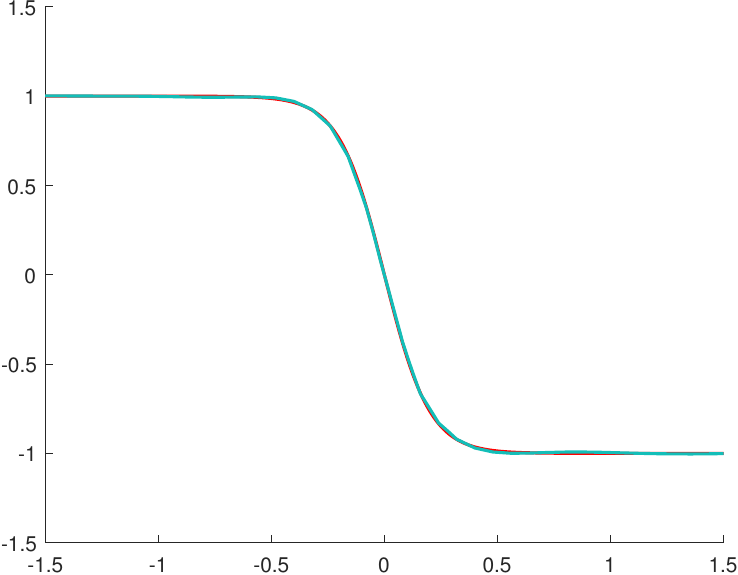}
    \includegraphics[width=0.20\textwidth]{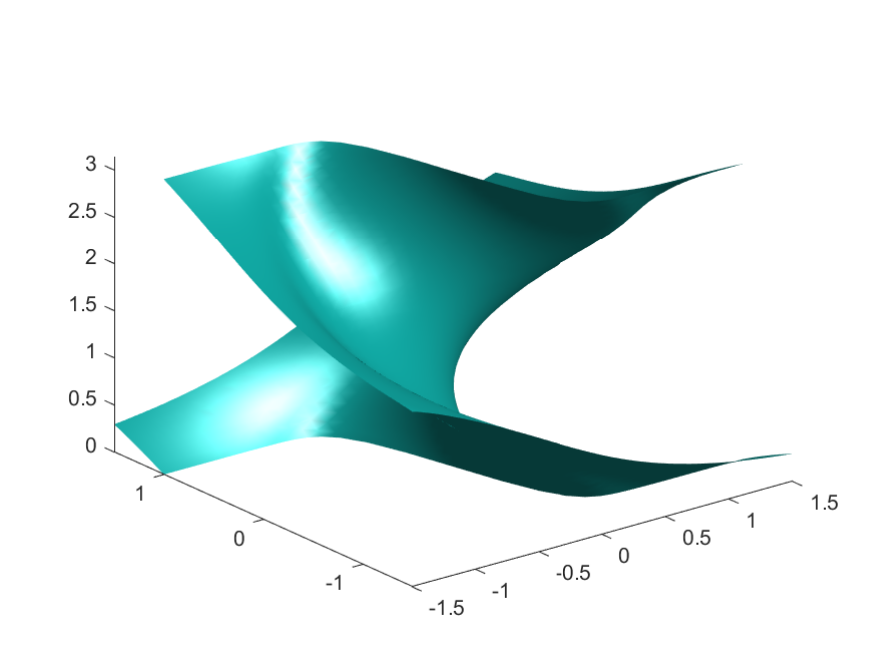}
    \caption{AG-RaNN method results for Case 4 of Example \ref{ex1}. Columns 1-3 correspond to $t=0,\pi/2,\pi$, and column 4 shows the zero level set. Rows (top to bottom) use normal collocation points, collocation set $\Lambda_A$ ($N_A^I=55236$, $N_A^B=1142$), and the layer growth strategy, respectively. Running times: $(t_1,t_2,t_3)=(2.62,2.07,9.87)$.}
    \label{fig:ex1_case4}
\end{figure}

For all cases, the results obtained by normal collocation points are less accurate. With collocation set $\Lambda_A$, the numerical solutions are closer to the exact solutions. After layer growth, the numerical solutions have higher precision. Compared to the results obtained by finite difference method in \cite{Jin2003Levelset}, RaNN method can get the solutions with similar accuracy at a faster speed, and we are able to compute the solution over a longer time horizon in these tests.

\begin{example}[2D Burgers' Equation] \label{ex2}
We also test the following 2D inviscid Burgers' equation:
\begin{align}
    \partial_t u + u\partial_x u + u\partial_y u &= 0, \quad(t,x,y)\in(0,\infty)\times\Real^2,\\
    u(0,x,y) &= u_0(x,y),\quad (x,y)\in\Real^2.
\end{align}
We explore two cases, varying $u_0(x,y)$. 
\end{example}

We present all parameters for each case in Table \ref{tab:ex2_parameter}.
\begin{table}[!htbp]
    \centering
    \begin{tabular}{|c|c|c|c|c|c|c|c|c|c|c|c|c|c|c|c|}
    \hline
     & $N^I$ & $N^B$ & E & Var & $N$ & $\varepsilon_A$ & T & $\Omega$ & $\br_1$ & $\br_2=\br_3$ \\ \hline
    \textbf{Case 1} & \multirow{2}{*}{200000} & \multirow{2}{*}{20000} & \multirow{2}{*}{(0,0)} & (1,1) & \multirow{2}{*}{$31^4$} & \multirow{2}{*}{0.5} & 1 & \multirow{2}{*}{$[-1,1]^3$} & (3,3,3,3) & N/A \\ \cline{1-1} \cline{5-5} \cline{8-8} \cline{10-11}
    \textbf{Case 2} &  &  &  & (2,2) &  &  & 0.5 &  & (2,2,2,2) & (2,2,2,2) \\ \hline    
    \end{tabular}
    \caption{Parameters used in Example \ref{ex2}. }
    \label{tab:ex2_parameter}
\end{table}

\noindent\textbf{Case 1: A continuous initial data.}

In this case, the initial condition is
\begin{align}
    u_0(x,y)=0.45\cos(\pi x)[\sin(\pi y)-1].
\end{align}
The numerical results are shown in Figure \ref{fig:ex2_case1}.

\begin{figure}[!htbp]
    \centering
    \includegraphics[width=0.25\textwidth]{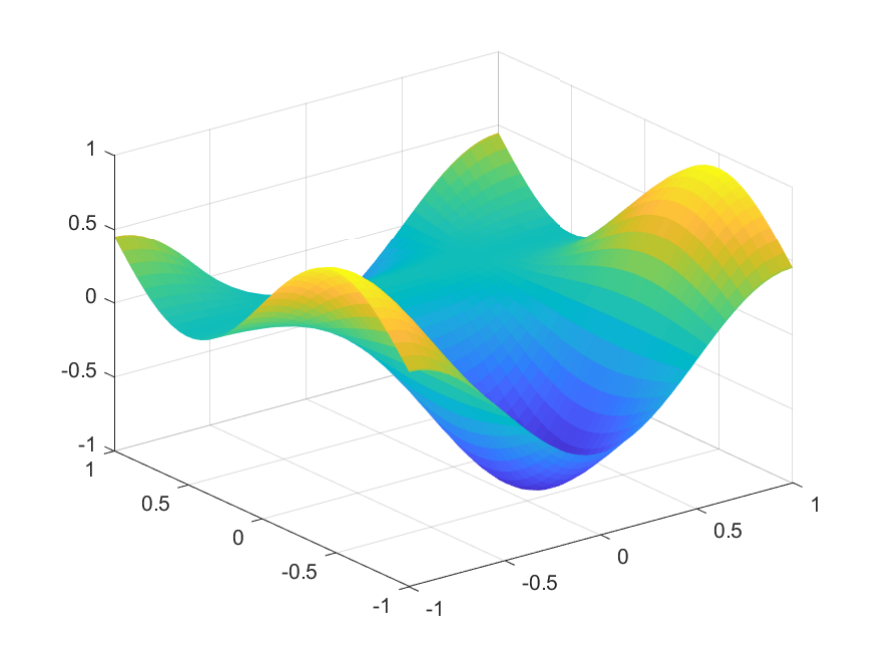}
    \includegraphics[width=0.25\textwidth]{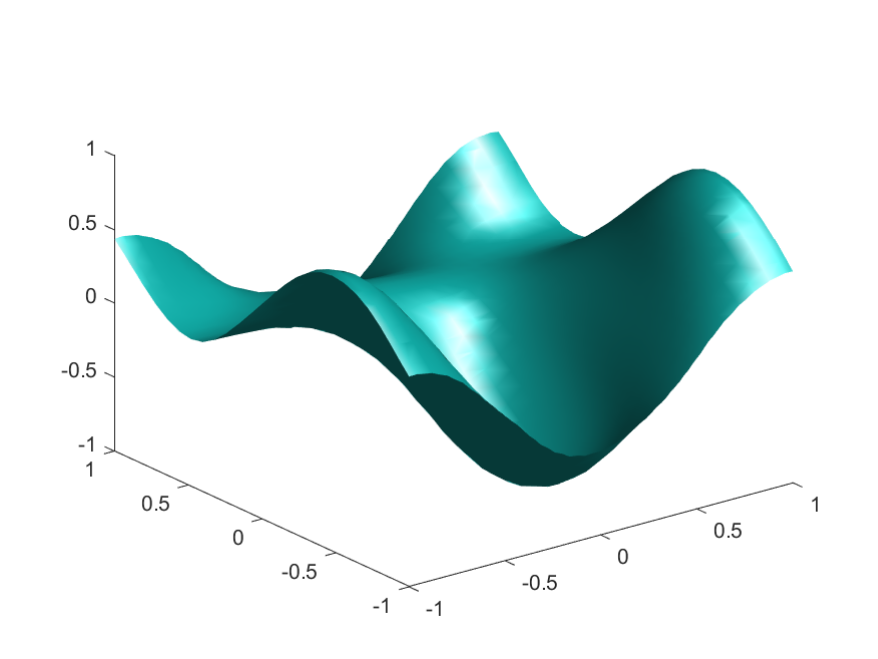}
    \includegraphics[width=0.25\textwidth]{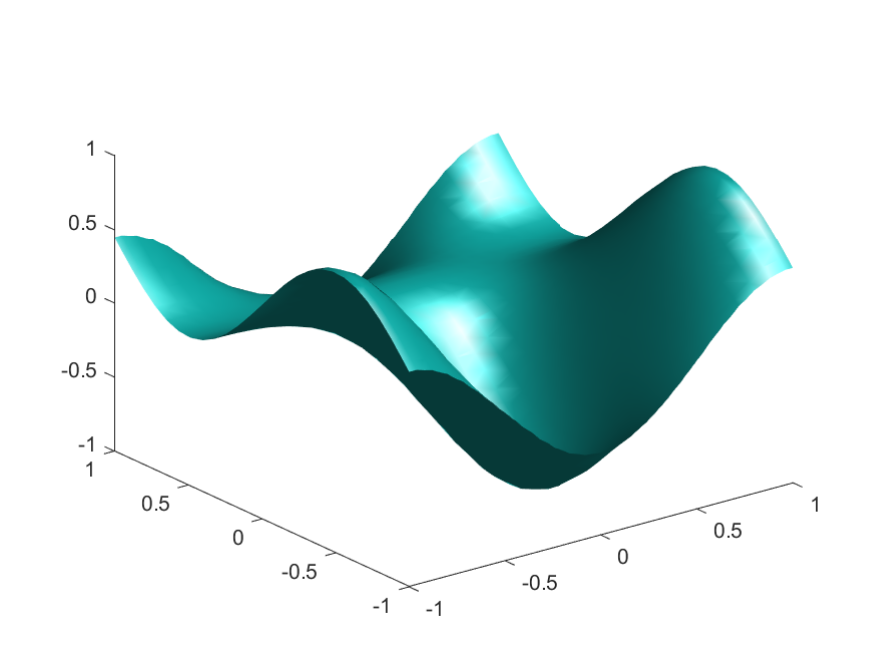}\\
    \includegraphics[width=0.25\textwidth]{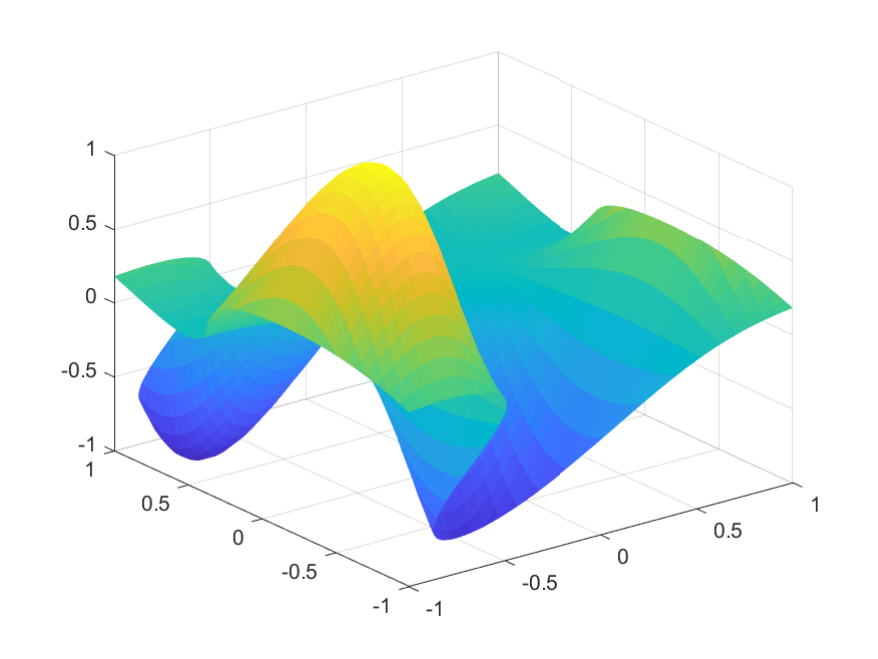}
    \includegraphics[width=0.25\textwidth]{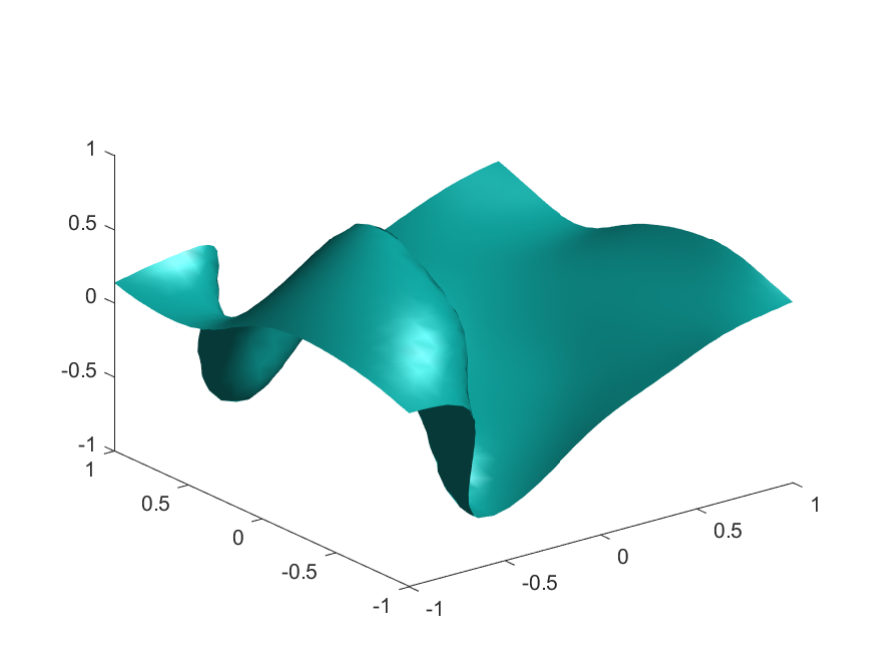}
    \includegraphics[width=0.25\textwidth]{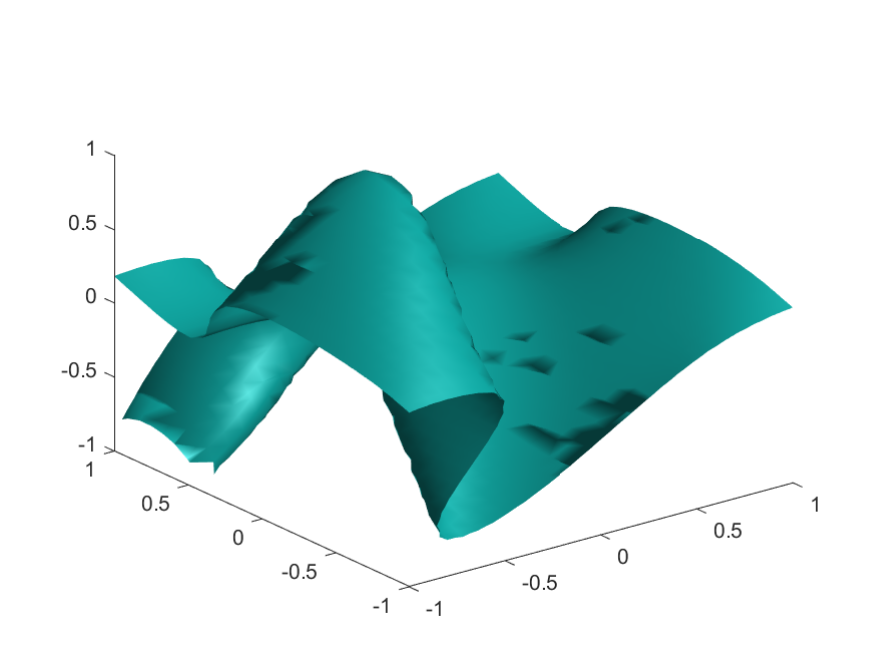}
    \caption{AG-RaNN method results for Case 1 of Example \ref{ex2}. The top and bottom rows correspond to $t=0$ and $t=1$, respectively. Columns (left to right) show the exact solutions, results by normal collocation points, and results by collocation set $\Lambda_A$ ($N_A^I=362245$, $N_A^B=11902$). Running times: $(t_1,t_2)=(12.85,45.42)$.}
    \label{fig:ex2_case1}
\end{figure}

\noindent\textbf{Case 2: A Riemann problem.}

In this case, the initial condition is
\begin{align}
    u_0(x,y)=\begin{cases}
        -1,\quad x>0,y>0,\\
        -0.2,\quad x<0,y>0,\\
        0.5,\quad x<0,y<0,\\
        0,\quad x>0,y<0.
    \end{cases}
\end{align}
The numerical results are shown in Figure \ref{fig:ex2_case2}.

\begin{figure}[!htbp]
    \centering
    \includegraphics[width=0.19\textwidth]{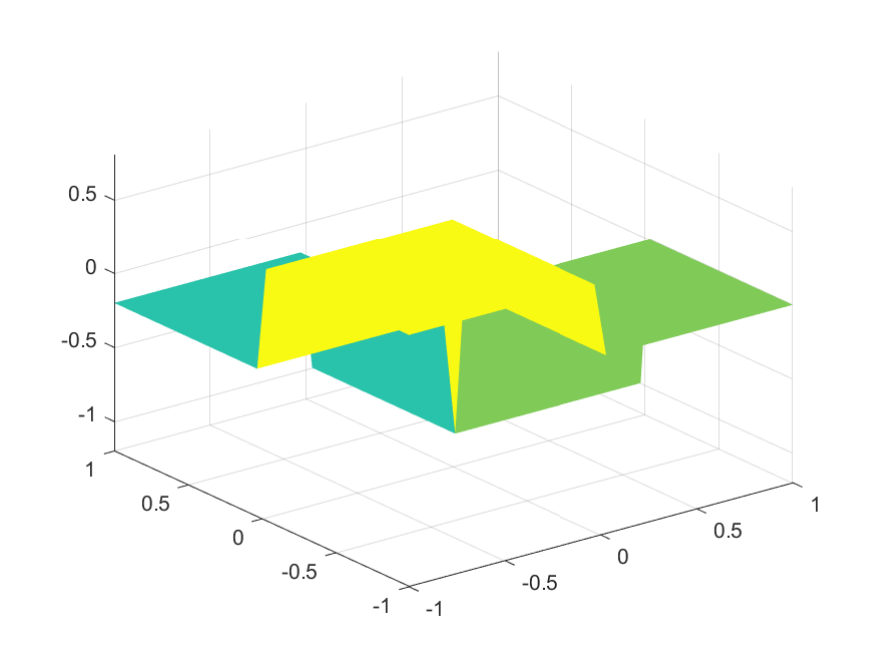}
    \includegraphics[width=0.19\textwidth]{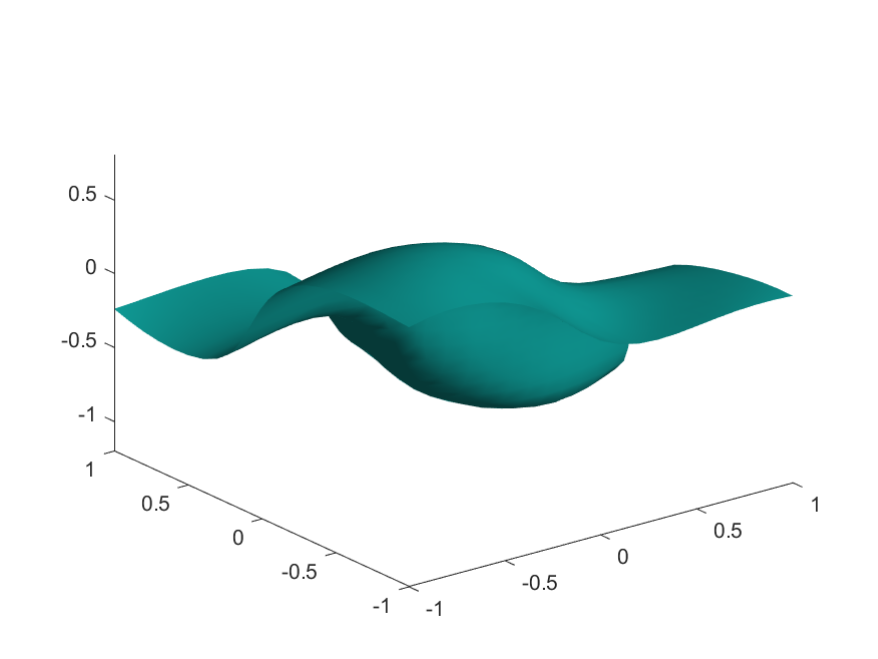}
    \includegraphics[width=0.19\textwidth]{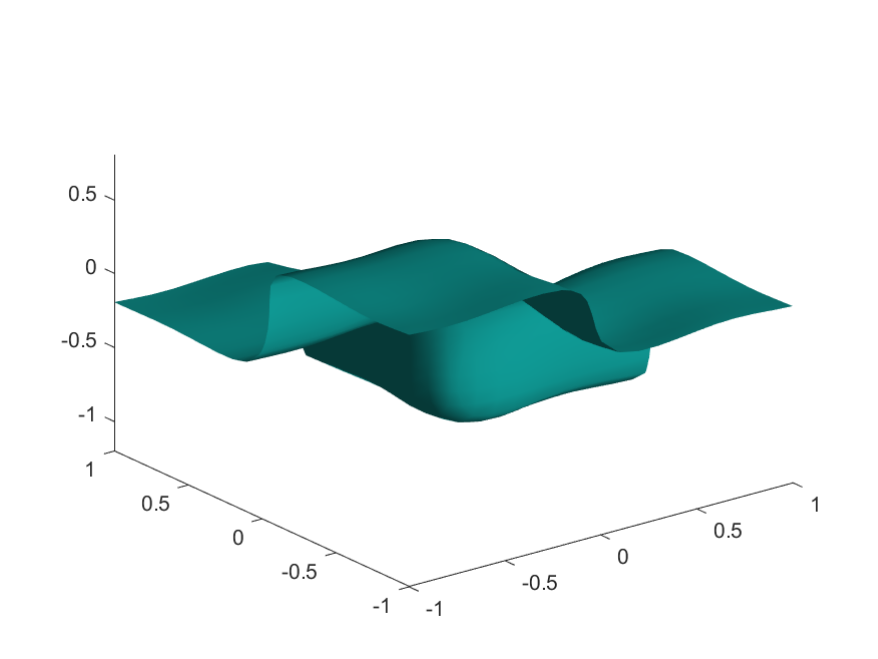}
    \includegraphics[width=0.19\textwidth]{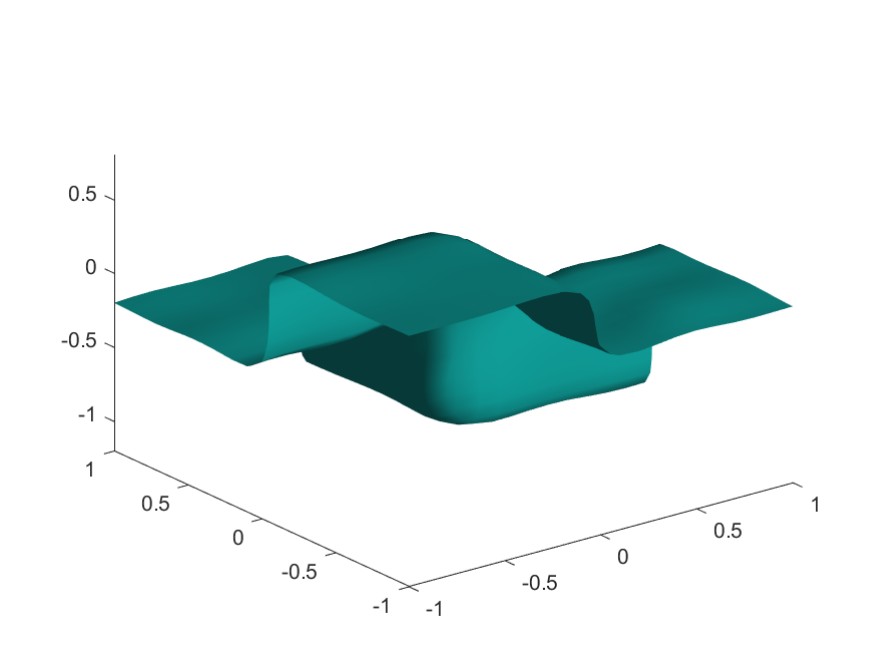}
    \includegraphics[width=0.19\textwidth]{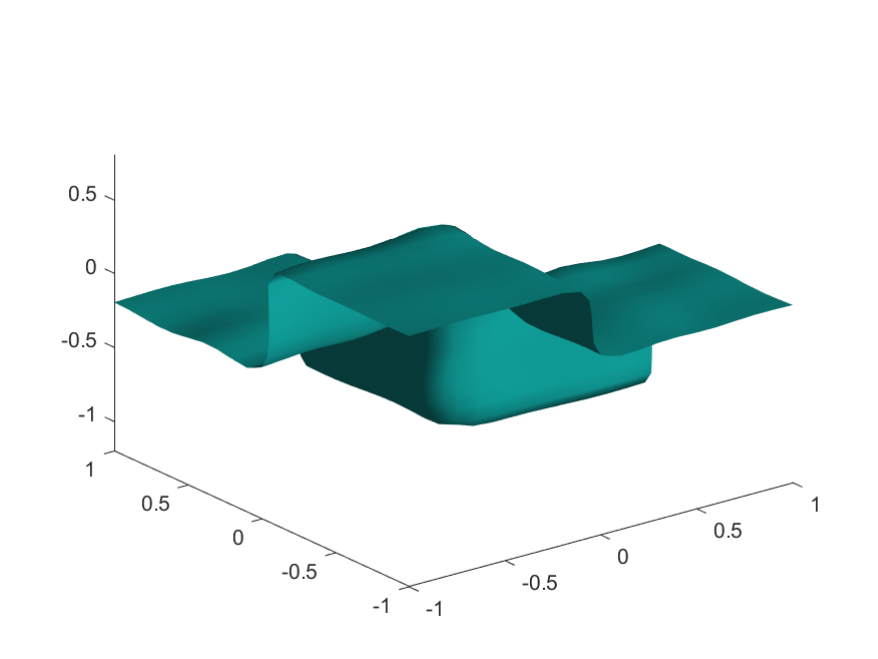}
    \includegraphics[width=0.19\textwidth]{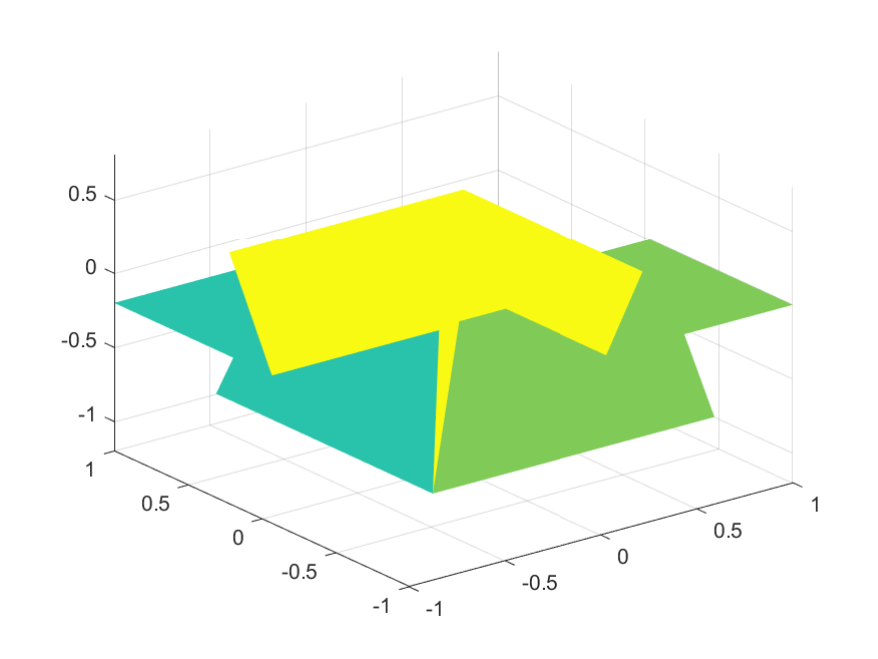}
    \includegraphics[width=0.19\textwidth]{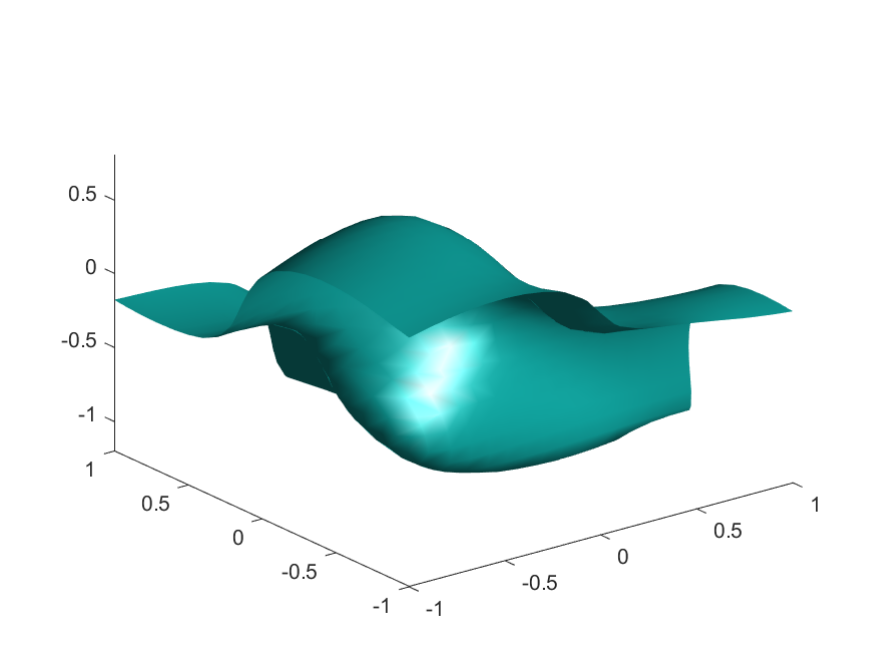}
    \includegraphics[width=0.19\textwidth]{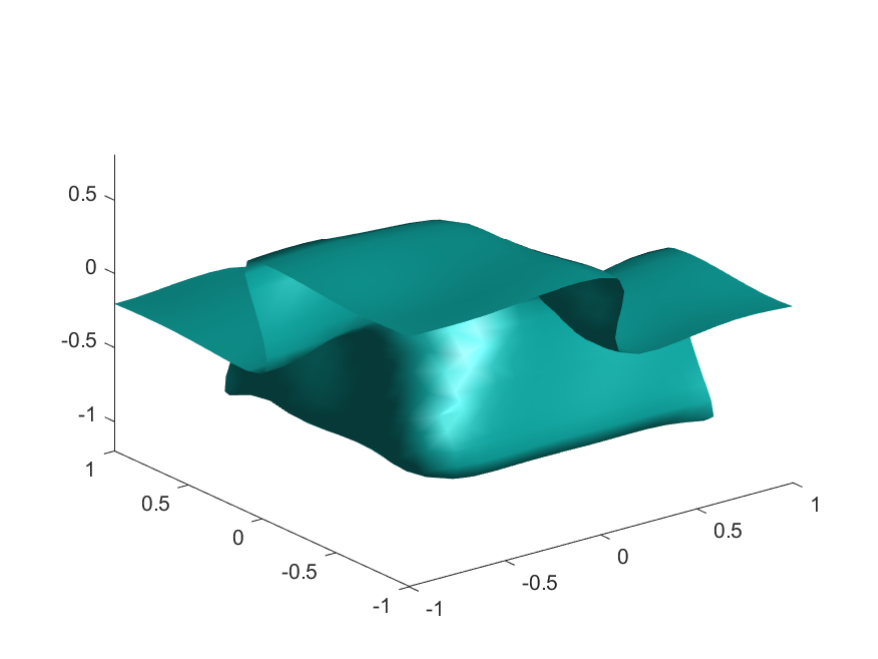}
    \includegraphics[width=0.19\textwidth]{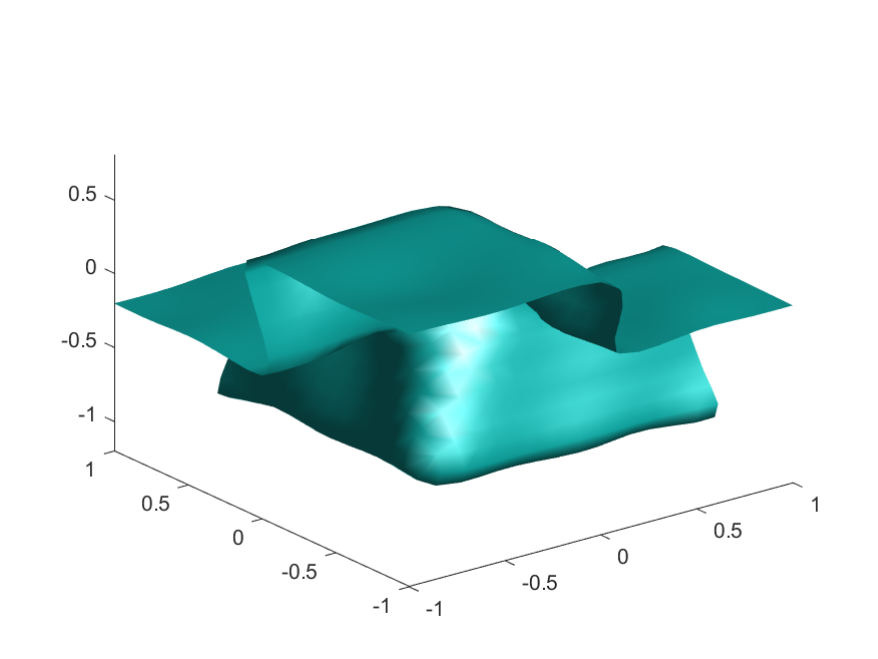}
    \includegraphics[width=0.19\textwidth]{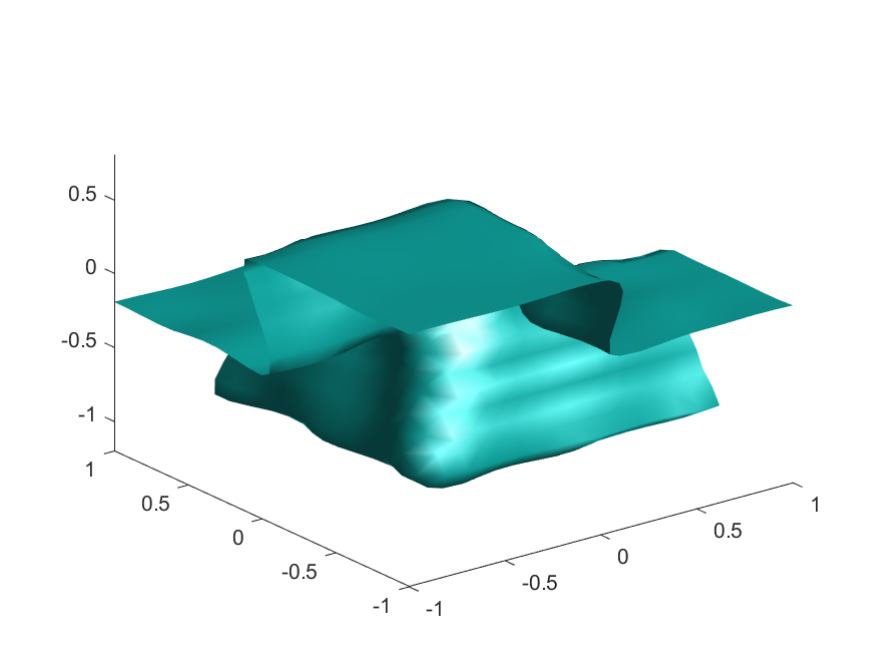}
    \caption{AG-RaNN method results for Case 2 of Example \ref{ex2}. The top and bottom rows correspond to $t=0$ and $t=0.5$, respectively. Columns 1-3 show the exact solutions, results by normal collocation points, results by collocation set $\Lambda_A$ ($N_A^I=469858$, $N_A^B=13757$), and the right two columns are obtained by the layer growth strategy.}
    \label{fig:ex2_case2}
\end{figure}

In this example, it is straightforward to calculate the numerical solution with adaptive collocation points when the initial condition is continuous as shown in Case 1. However, for the Riemann problem in Case 2, adaptive collocation based on a coarse solution may fail to localize the discontinuity accurately. We therefore employ the layer-growth strategy to enhance the expressive power of the approximation. As shown in Figure \ref{fig:ex2_case2}, increasing the number of layers improves resolution near the discontinuity, although the shocks remain mildly smeared.

\begin{example}[1D Hamilton--Jacobi Equation] \label{ex3}
We consider the following 1D Hamilton--Jacobi equation:
\begin{align}
    \partial_t S + \frac{1}{2}|\partial_x S|^2 + V &= 0, \quad(t,x)\in(0,\infty)\times\Real,\label{ex3:eq1}\\
    S(0,x) &= S_0(x),\quad x\in\Real,
\end{align}
where $V(x)$ is the potential. We explore four cases, varying $V(x)$ and $S_0(x)$. 
\end{example}

In this example, we compute only the gradient field $\partial_x S$ via the gradient-based level-set formulation \eqref{eq:HJ5}. The parameters for each case are listed in Table \ref{tab:ex3_parameter}.
\begin{table}[!htbp]
    \centering
    \begin{tabular}{|c|c|c|c|c|c|c|c|c|c|c|c|c|c|c|c|}
    \hline
     & $N^I$ & $N^B$ & E & Var & $N$ & $\varepsilon_A$ & T & $\Omega$ & $\br_1$ \\ \hline
    \textbf{Case 1} & \multirow{4}{*}{20000} & \multirow{4}{*}{5000} & \multirow{4}{*}{(0,0)} & \multirow{4}{*}{(2,2)} & \multirow{4}{*}{$51^3$} & 1 & 10 & $[-2,2]\times[-2,2]$ & \multirow{4}{*}{(2,2,2)} \\ \cline{1-1} \cline{7-9}
    \textbf{Case 2} &  &  &  &  &  & 1 & 10 & $[-2,2]\times[-2,2]$ &  \\ \cline{1-1} \cline{7-9}
    \textbf{Case 3} &  &  &  &  &  & 0.6 & 2 & $[-2,2]\times[-2,2]$ &  \\ \cline{1-1} \cline{7-9}
    \textbf{Case 4} &  &  &  &  &  & 0.6 & $2\pi$ & $[-3,3]\times[-3,3]$ &  \\ \hline    
    \end{tabular}
    \caption{Parameters used in Example \ref{ex3}. }
    \label{tab:ex3_parameter}
\end{table}

\noindent\textbf{Case 1: No caustic.}

In this case, the exact solution is
\begin{align}
    S(t,x)=\frac{x^2}{2(t+1)},\quad \partial_x S(t,x)=\frac{x}{t+1}
\end{align}
and the potential is $V(x)=0$. The numerical results are shown in Figure \ref{fig:ex3_case1}.

\begin{figure}[!htbp]
    \centering
    \includegraphics[width=0.20\textwidth]{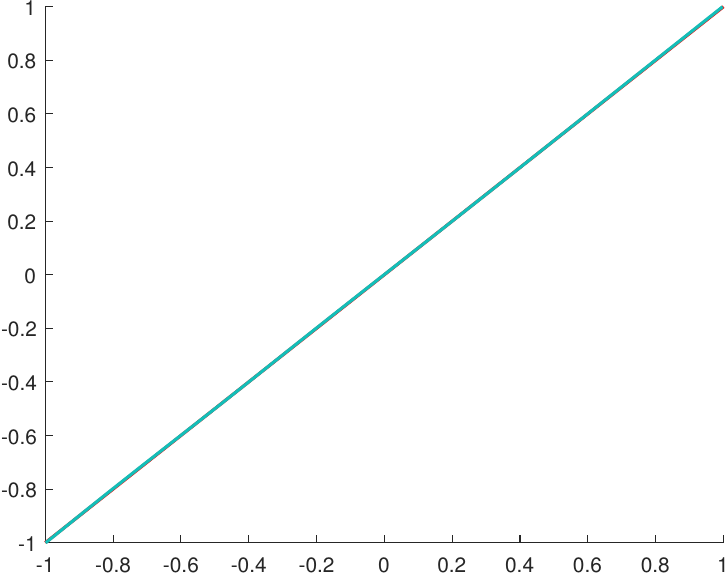}
    \includegraphics[width=0.20\textwidth]{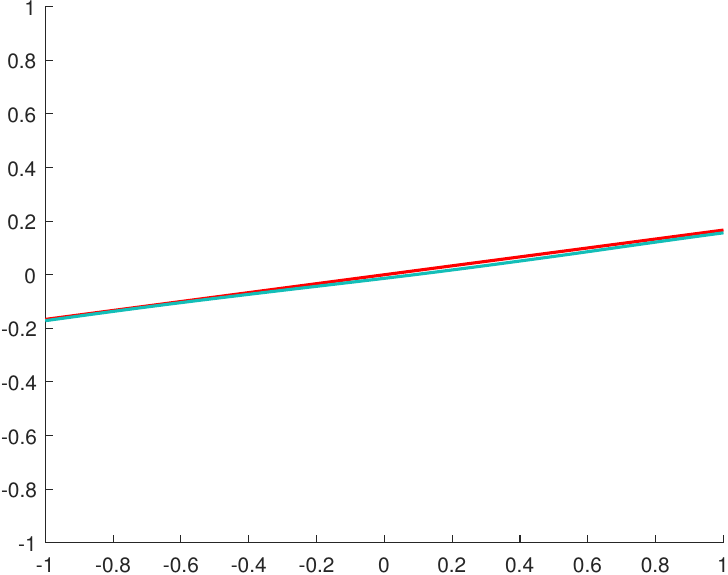}
    \includegraphics[width=0.20\textwidth]{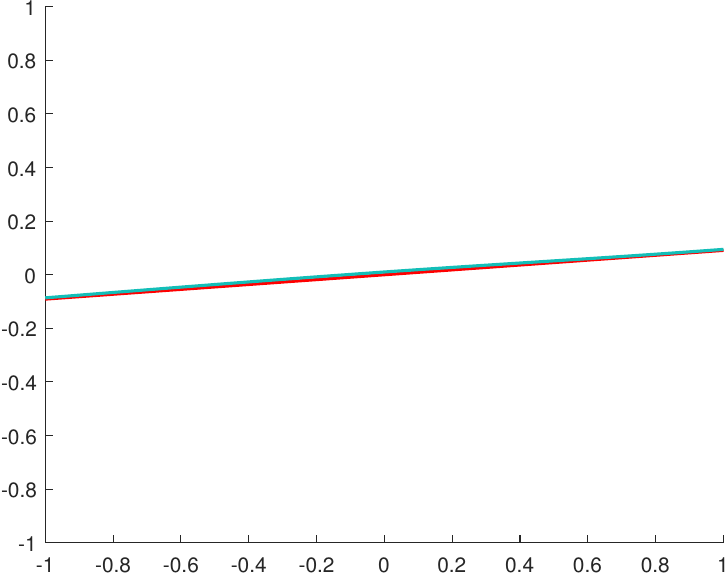}
    \includegraphics[width=0.20\textwidth]{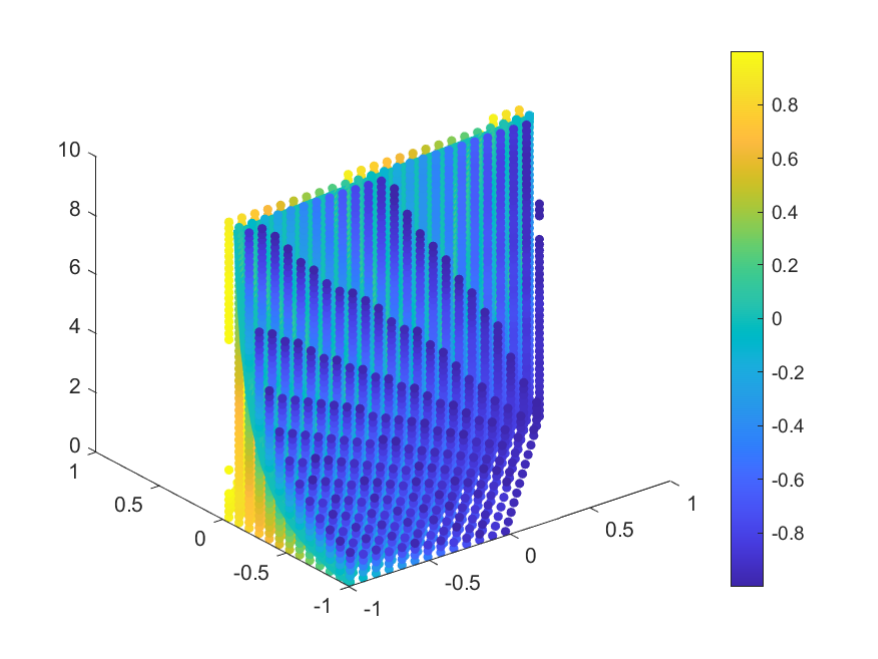}
    \includegraphics[width=0.20\textwidth]{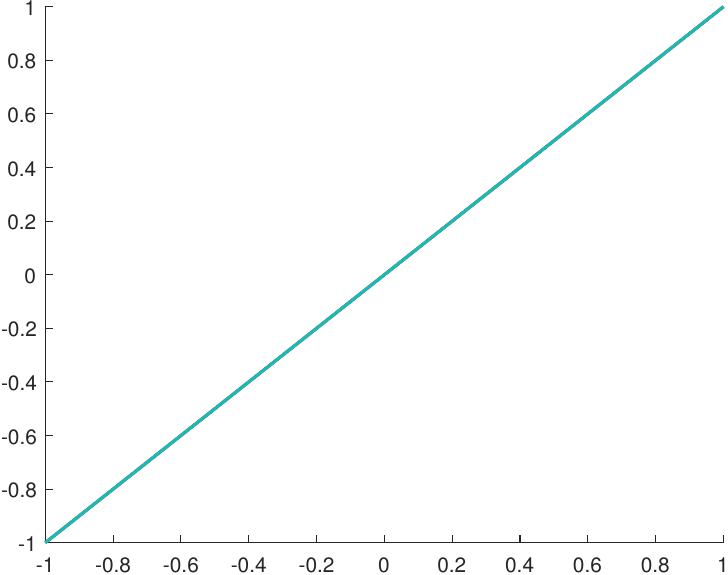}
    \includegraphics[width=0.20\textwidth]{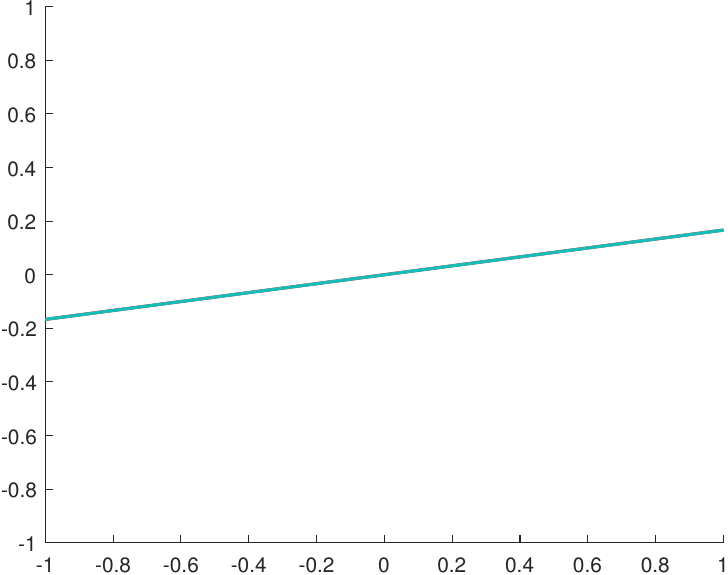}
    \includegraphics[width=0.20\textwidth]{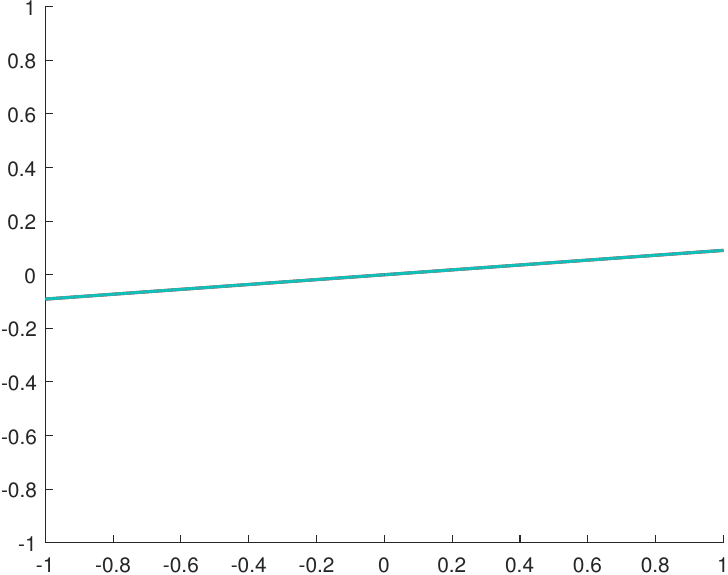}
    \includegraphics[width=0.20\textwidth]{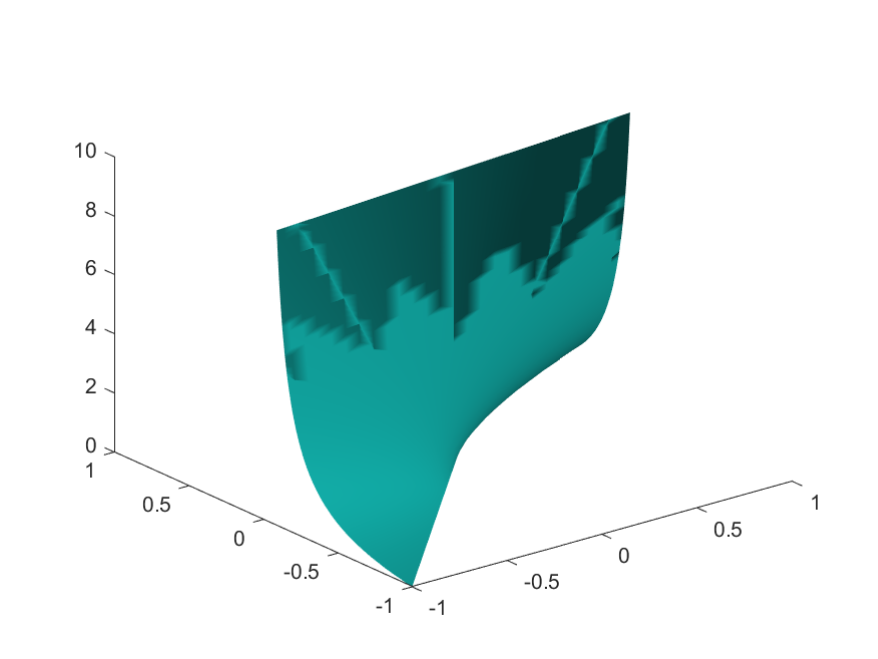}
    \caption{AG-RaNN method results for Case 1 of Example \ref{ex3}. Columns 1-3 correspond to $t=0,5,10$, and column 4 shows the zero level set. The top and bottom rows are obtained by normal collocation points and collocation set $\Lambda_A$ ($N_A^I=15933$, $N_A^B=1119$), respectively. Running times: $(t_1,t_2)=(0.95,0.76)$.}

    \label{fig:ex3_case1}
\end{figure}

\noindent\textbf{Case 2: Focusing at a point.}

In this case, the exact solution is
\begin{align}
    S(t,x)=\frac{x^2}{2(t-1)},\quad \partial_x S(t,x)=\frac{x}{t-1}
\end{align}
and the potential is $V(x)=0$. The numerical results are shown in Figure \ref{fig:ex3_case2}.

\begin{figure}[!htbp]
    \centering
    \includegraphics[width=0.20\textwidth]{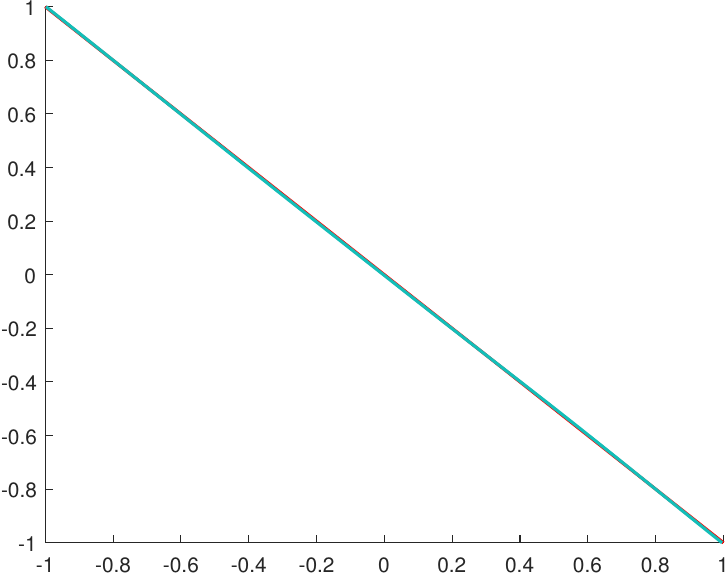}
    \includegraphics[width=0.20\textwidth]{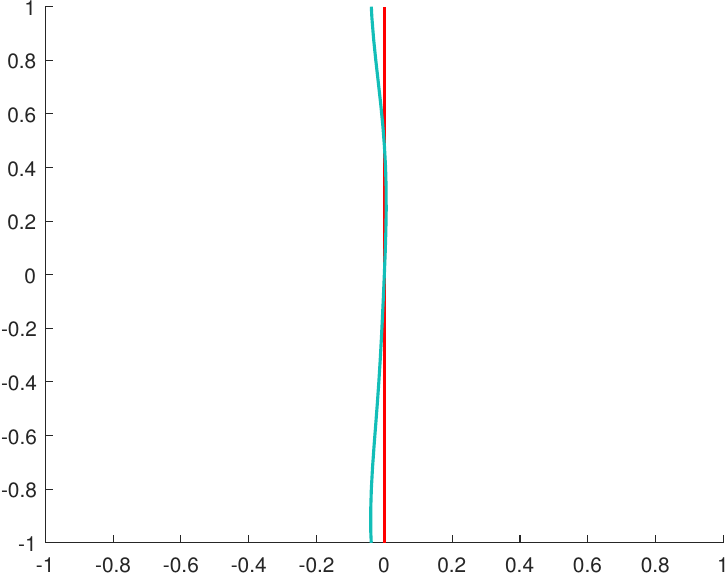}
    \includegraphics[width=0.20\textwidth]{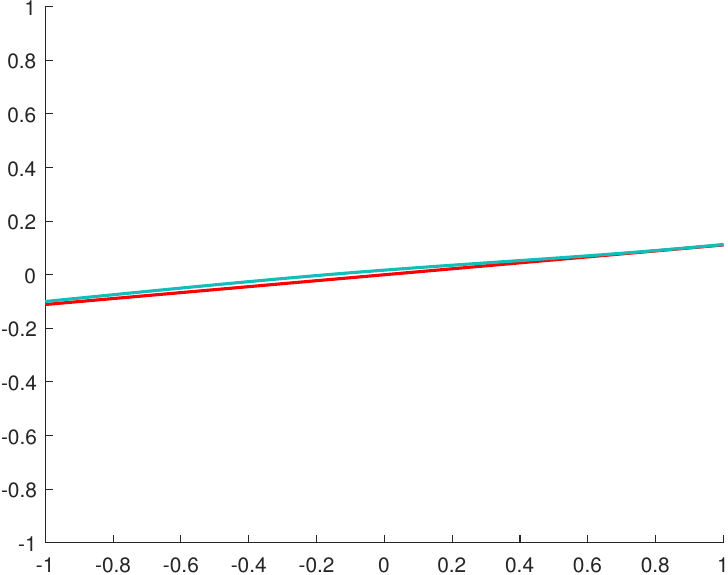}
    \includegraphics[width=0.20\textwidth]{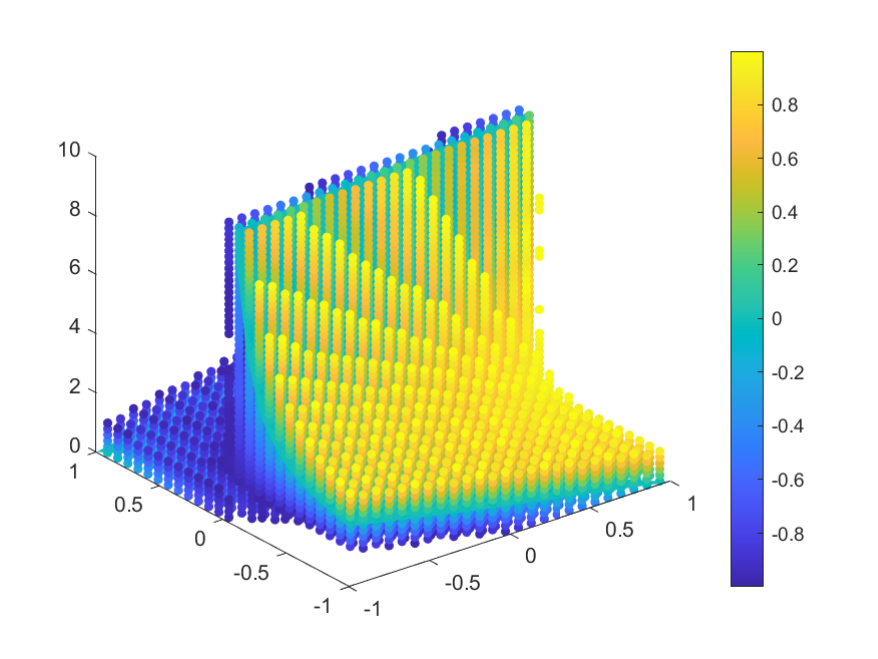}
    \includegraphics[width=0.20\textwidth]{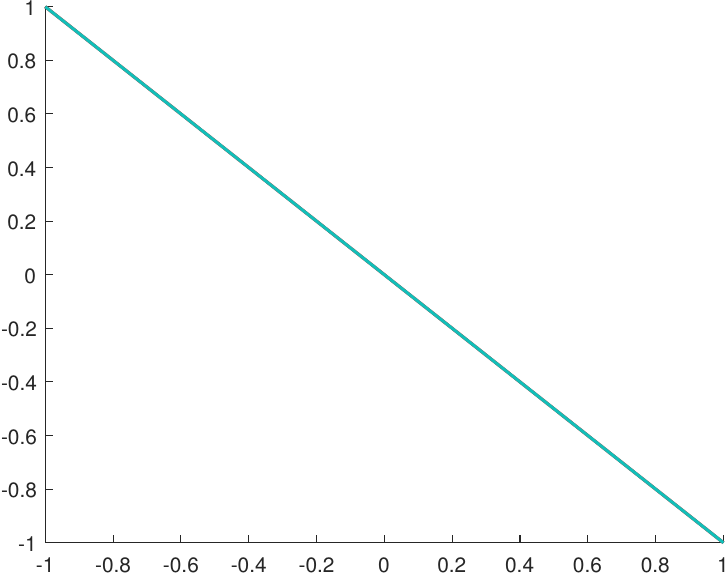}
    \includegraphics[width=0.20\textwidth]{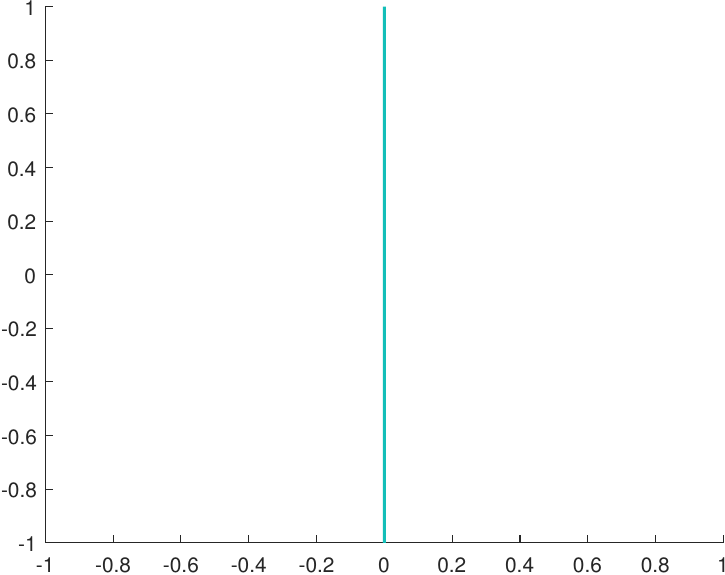}
    \includegraphics[width=0.20\textwidth]{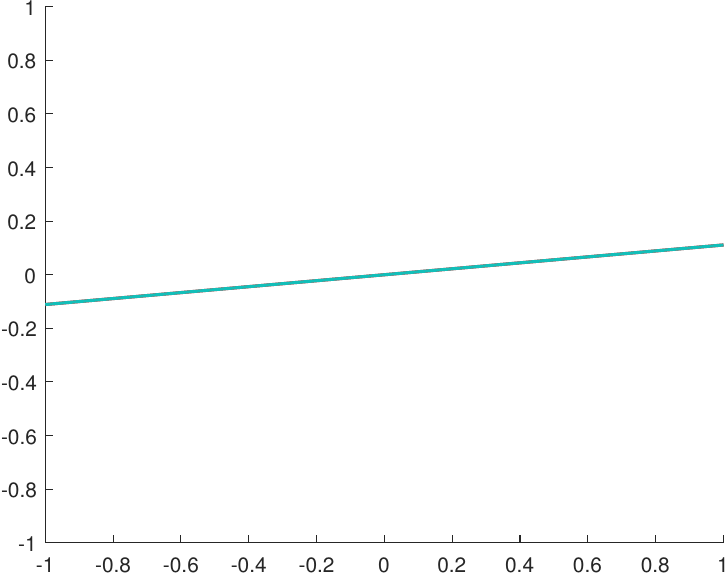}
    \includegraphics[width=0.20\textwidth]{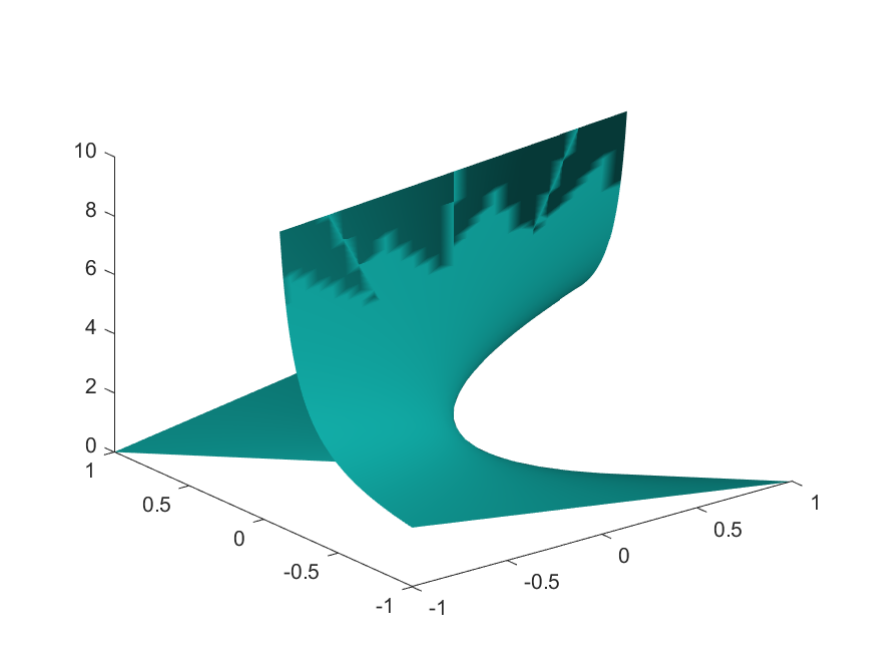}
    \caption{AG-RaNN method results for Case 2 of Example \ref{ex3}. Columns 1-3 correspond to $t=0,2,10$, and column 4 shows the zero level set. The top and bottom rows are obtained by normal collocation points and collocation set $\Lambda_A$ ($N_A^I=26942$, $N_A^B=1119$), respectively. Running times: $(t_1,t_2)=(0.92,1.16)$.}
    \label{fig:ex3_case2}
\end{figure}

\noindent\textbf{Case 3: Caustic.}

In this case, the initial condition is
\begin{align}
    S_0(x)=-\ln(\cosh(x)),\quad \partial_x S_0(x)=-\tanh(x)
\end{align}
and the potential is $V(x)=0$. The numerical results are shown in Figure \ref{fig:ex3_case3}.

\begin{figure}[!htbp]
    \centering
    \includegraphics[width=0.20\textwidth]{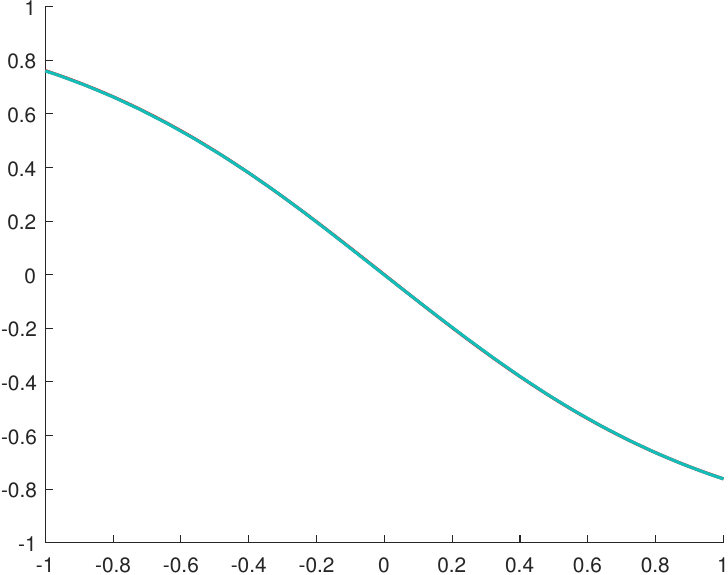}
    \includegraphics[width=0.20\textwidth]{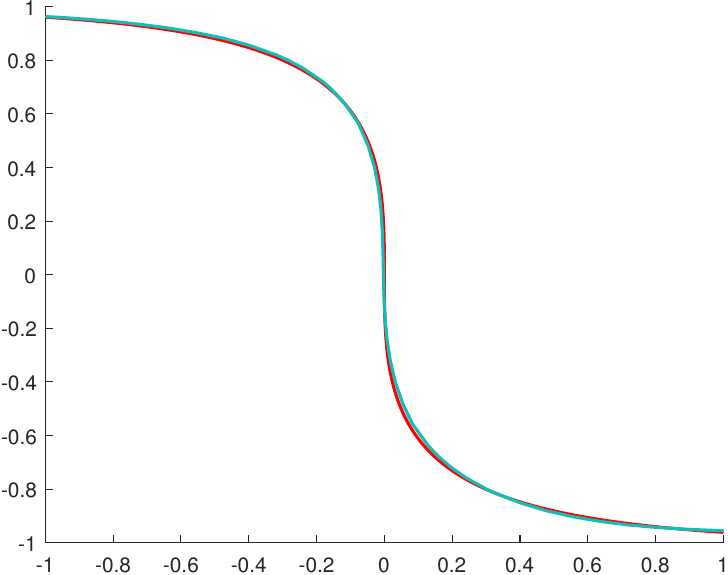}
    \includegraphics[width=0.20\textwidth]{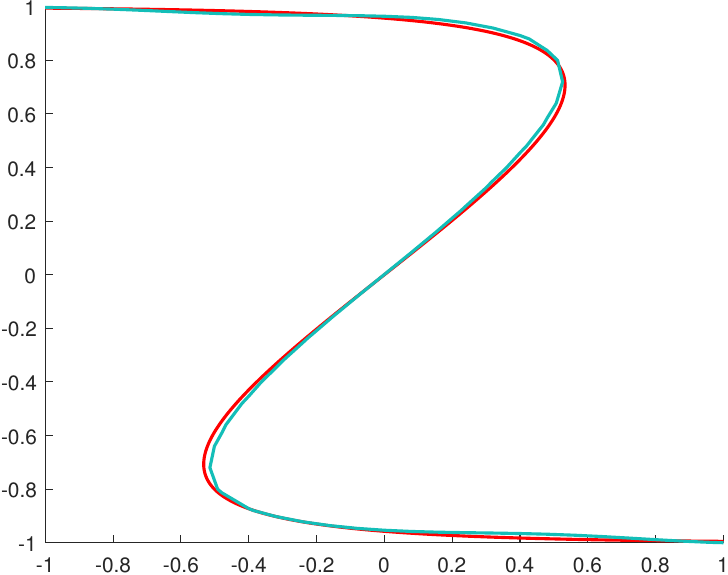}
    \includegraphics[width=0.20\textwidth]{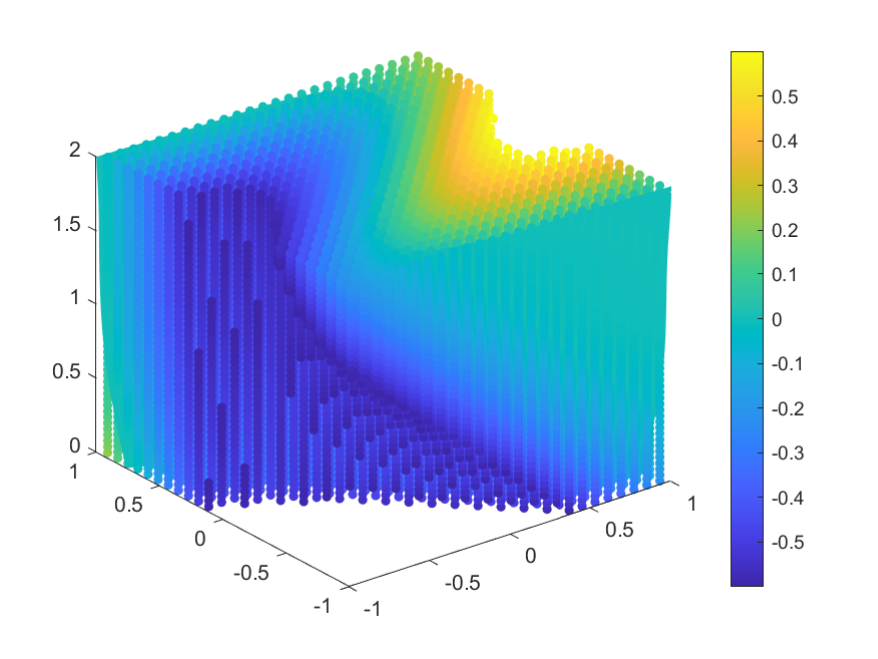}
    \includegraphics[width=0.20\textwidth]{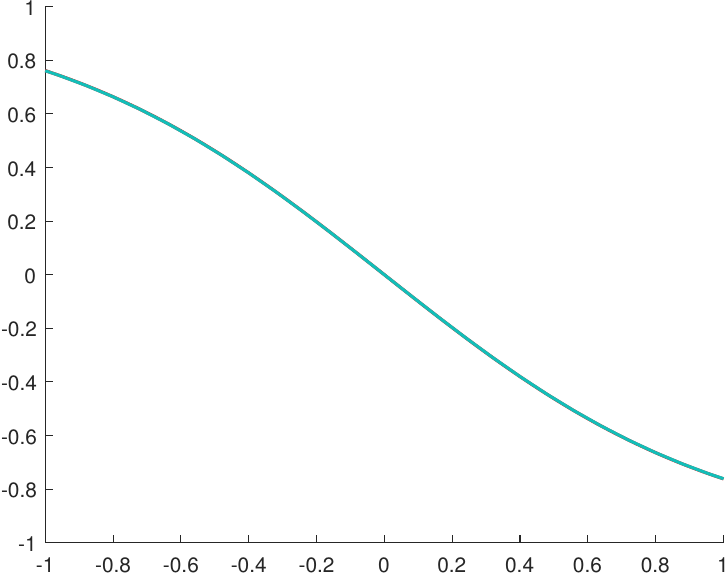}
    \includegraphics[width=0.20\textwidth]{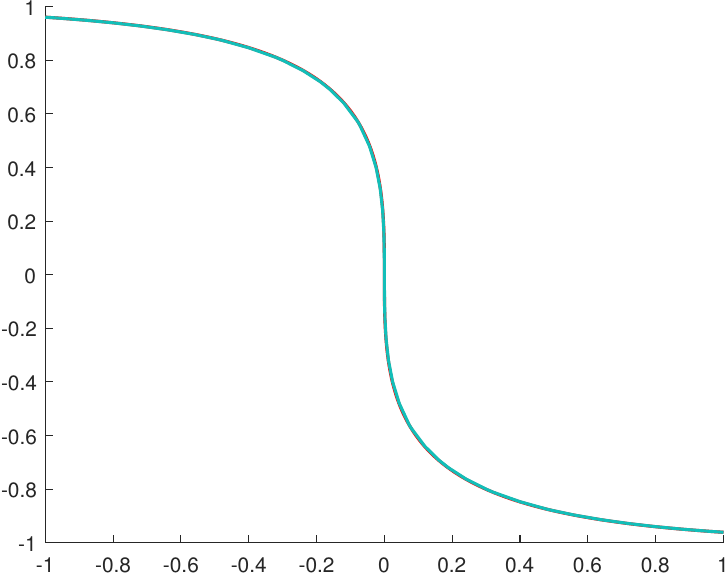}
    \includegraphics[width=0.20\textwidth]{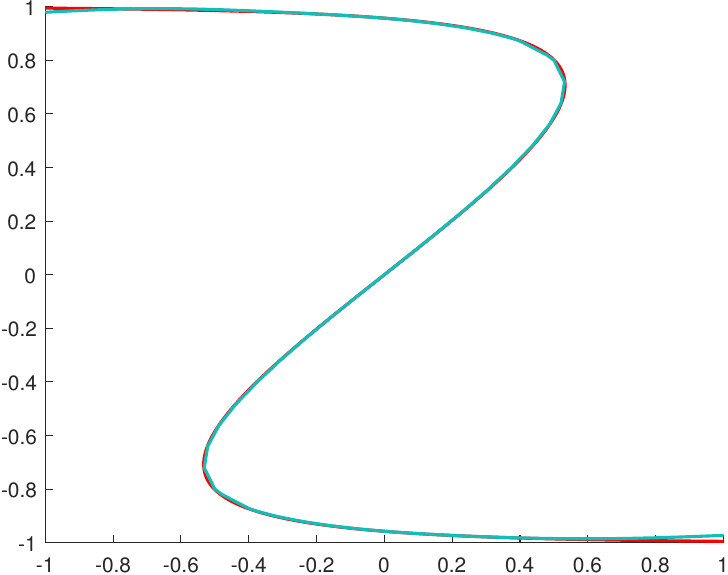}
    \includegraphics[width=0.20\textwidth]{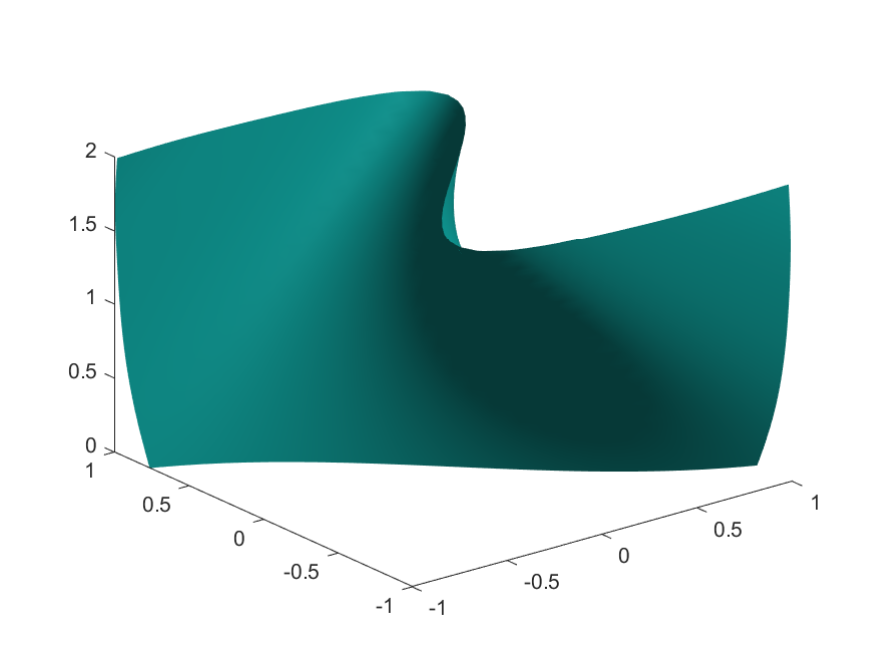}
    \caption{AG-RaNN method results for Case 3 of Example \ref{ex3}. Columns 1-3 correspond to $t=0,1,2$, and column 4 shows the zero level set. The top and bottom rows are obtained by normal collocation points and collocation set $\Lambda_A$ ($N_A^I=57777$, $N_A^B=765$), respectively. Running times: $(t_1,t_2)=(0.99,2.10)$.}
    \label{fig:ex3_case3}
\end{figure}

\noindent\textbf{Case 4: The harmonic oscillator.}

In this case, the exact solution is
\begin{align}
    S(t,x)=-\frac{1}{2}(x^2+1)\tan(t)+\frac{x}{\cos(t)},\quad \partial_x S(t,x)=-x\tan(t)+\frac{1}{\cos(t)}
\end{align}
and the potential is $V(x)=x^2/2$. The numerical results are shown in Figure \ref{fig:ex3_case4}.

\begin{figure}[!htbp]
    \centering
    \includegraphics[width=0.20\textwidth]{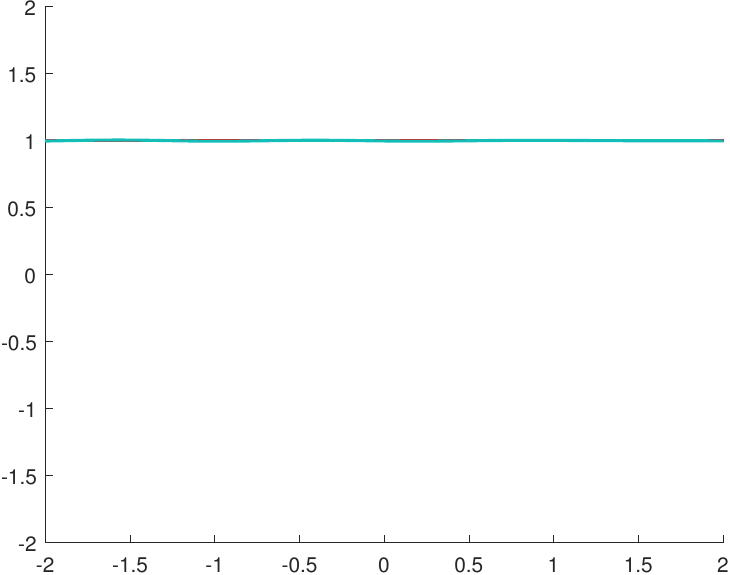}
    \includegraphics[width=0.20\textwidth]{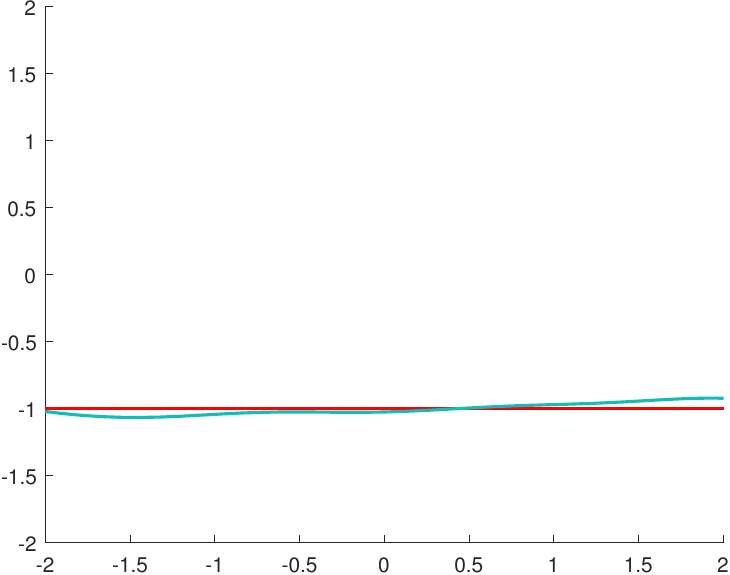}
    \includegraphics[width=0.20\textwidth]{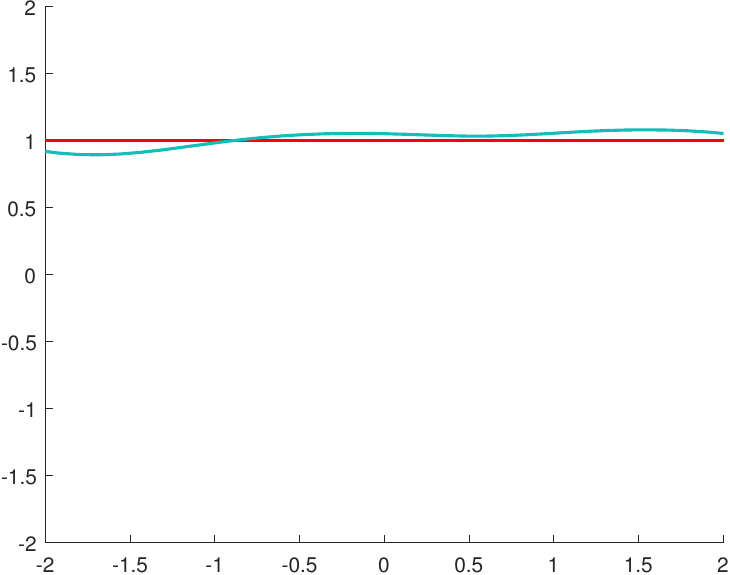}
    \includegraphics[width=0.20\textwidth]{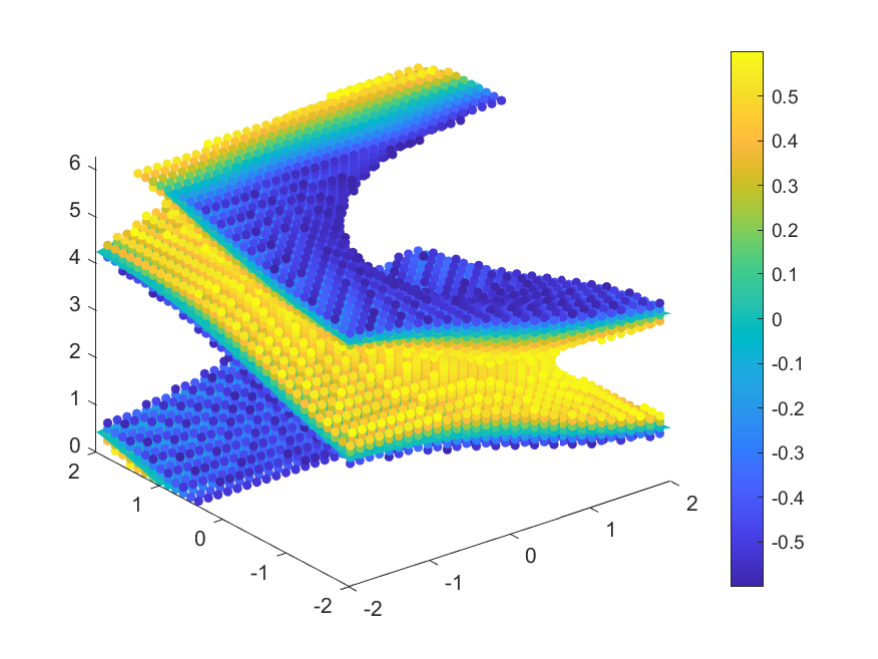}
    \includegraphics[width=0.20\textwidth]{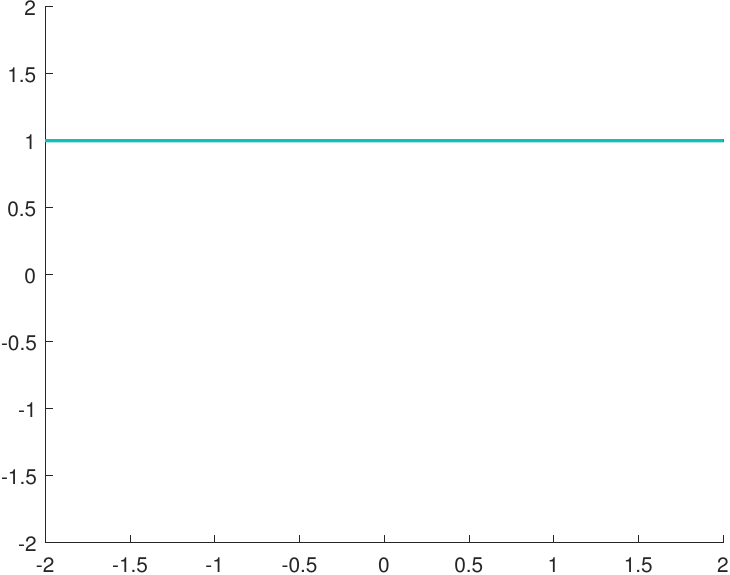}
    \includegraphics[width=0.20\textwidth]{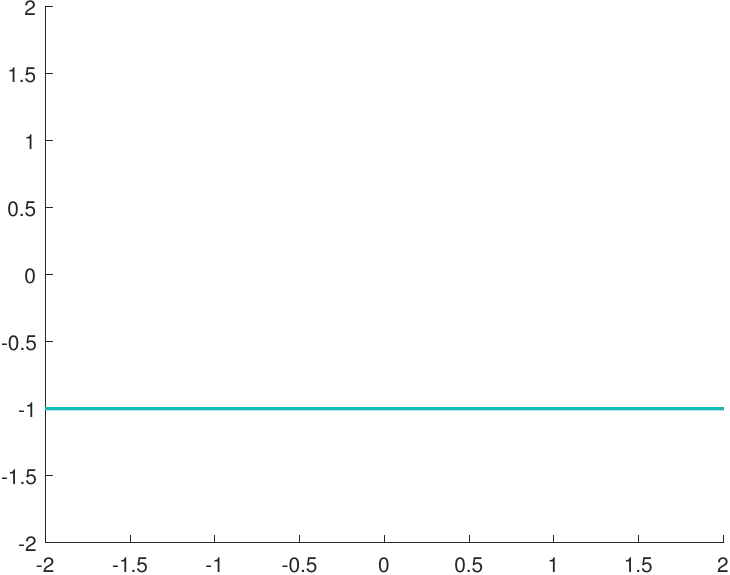}
    \includegraphics[width=0.20\textwidth]{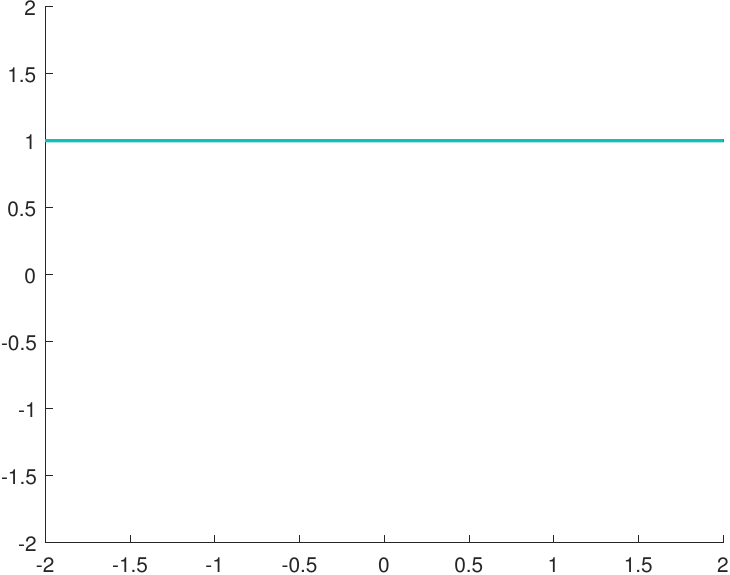}
    \includegraphics[width=0.20\textwidth]{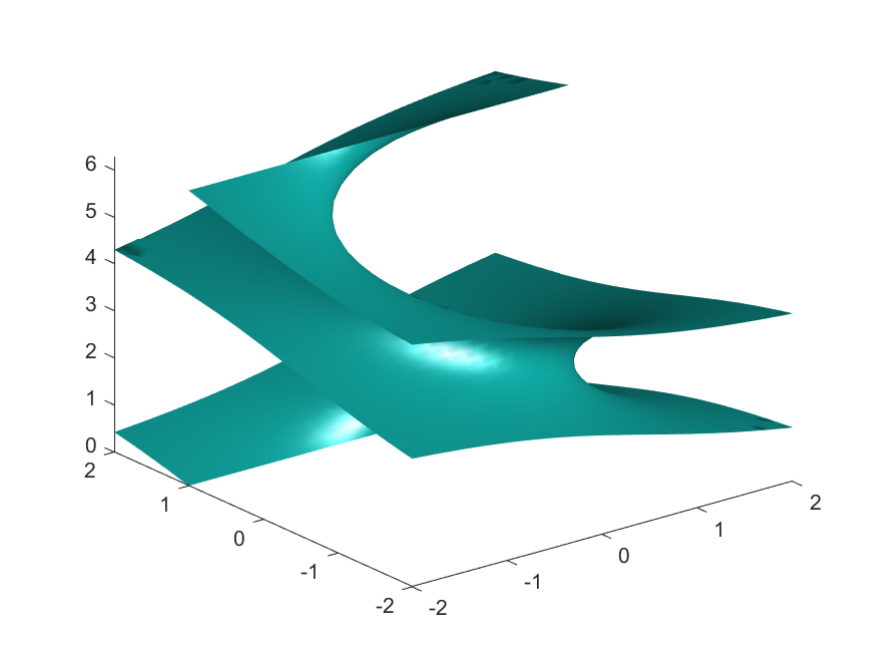}
    \caption{AG-RaNN method results for Case 4 of Example \ref{ex3}. Columns 1-3 correspond to $t=0,\pi,2\pi$, and column 4 shows the zero level set. The top and bottom rows are obtained by normal collocation points and collocation set $\Lambda_A$ ($N_A^I=28170$, $N_A^B=510$), respectively. Running times: $(t_1,t_2)=(0.96,1.18)$.}
    \label{fig:ex3_case4}
\end{figure}

The original equation is a two-dimensional problem of time and space $S(t,x)$, which is transformed into a three-dimensional problem $\phi(t,x,p)$ by introducing the level-set function. But we can only find the gradient of $S$. In all cases, the initial values have good smoothness. It can be seen that RaNN method also has good performance for this problem.

\begin{example}[1D Hamilton--Jacobi Equation: recovering both $S$ and $\partial_x S$] \label{ex3-1}
In this example, we consider the same setting as Example \ref{ex3}, and solve $S$ as well as the gradient of $S$ based on level-set equation \eqref{eq:HJ2}, all parameters for each case are presented in Table \ref{tab:ex3-1_parameter}.
\end{example}
\begin{table}[!htbp]
    \centering
    \begin{tabular}{|c|c|c|c|c|c|c|c|c|c|c|c|c|c|c|c|}
    \hline
     & $N^I$ & $N^B$ & E & Var & $N$ & $\varepsilon_A$ & T & $\Omega$ & $\br_1$ \\ \hline
    \textbf{Case 1} & \multirow{3}{*}{100000} & \multirow{3}{*}{5000} & \multirow{4}{*}{(0,0,0)} & \multirow{3}{*}{(1.5,1.5,1.5)} & \multirow{4}{*}{$21^4$} & \multirow{3}{*}{0.5} & \multirow{3}{*}{2} & \multirow{3}{*}{$[-1.5,1.5]^3$} & \multirow{3}{*}{(3,3,3,3)} \\ \cline{1-1}
    \textbf{Case 2} &  &  &  &  &  &  &  &  &  \\ \cline{1-1}
    \textbf{Case 3} &  &  &  &  &  &  &  &  &  \\ \cline{1-3} \cline{5-5} \cline{7-10}
    \textbf{Case 4} & 200000 & 20000 &  & (3,3,3) &  & 1 & $2\pi$ & $[-2.5,2.5]^3$ & (0.3,0.3,0.3,0.3) \\ \hline    
    \end{tabular}
    \caption{Parameters used in Example \ref{ex3-1}. }
    \label{tab:ex3-1_parameter}
\end{table}

\noindent\textbf{Case 1: No caustic.}

In this case, the exact solution is
\begin{align}
    S(t,x)=\frac{x^2}{2(t+1)}
\end{align}
and the potential is $V(x)=0$. The numerical results are shown in Figure \ref{fig:ex3-1_case1_HJ}.

\begin{figure}[!htbp]
    \centering
    \includegraphics[width=0.16\textwidth]{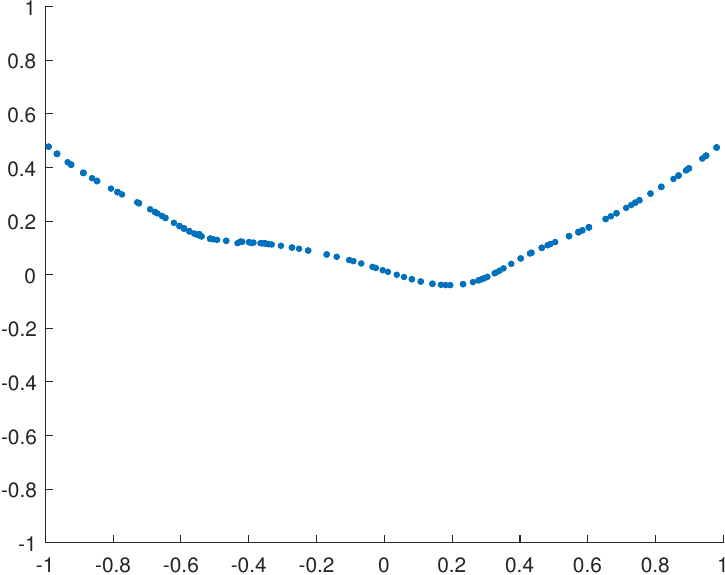}
    \includegraphics[width=0.16\textwidth]{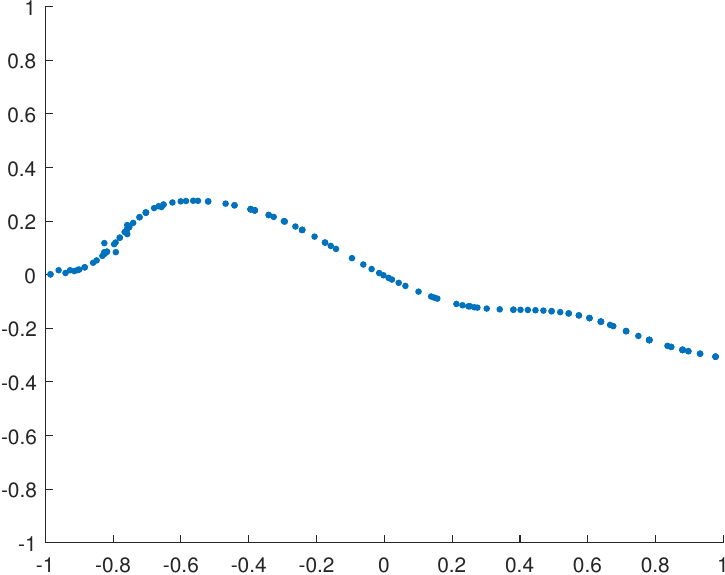}
    \includegraphics[width=0.16\textwidth]{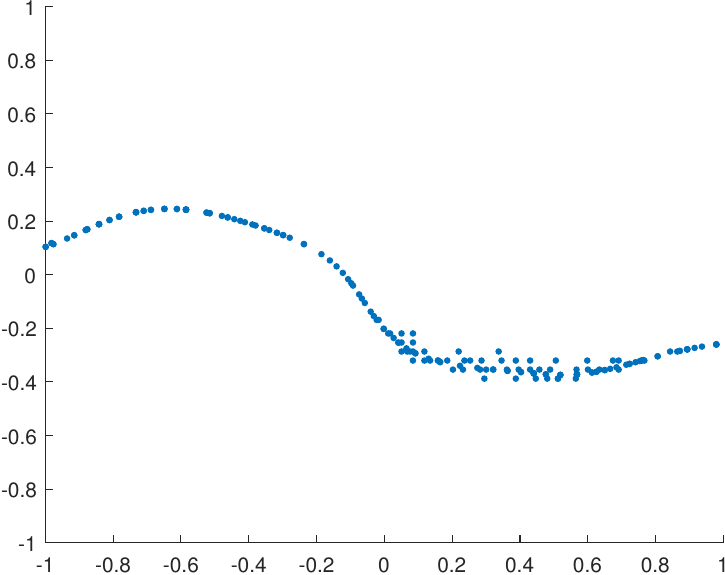}
    \includegraphics[width=0.16\textwidth]{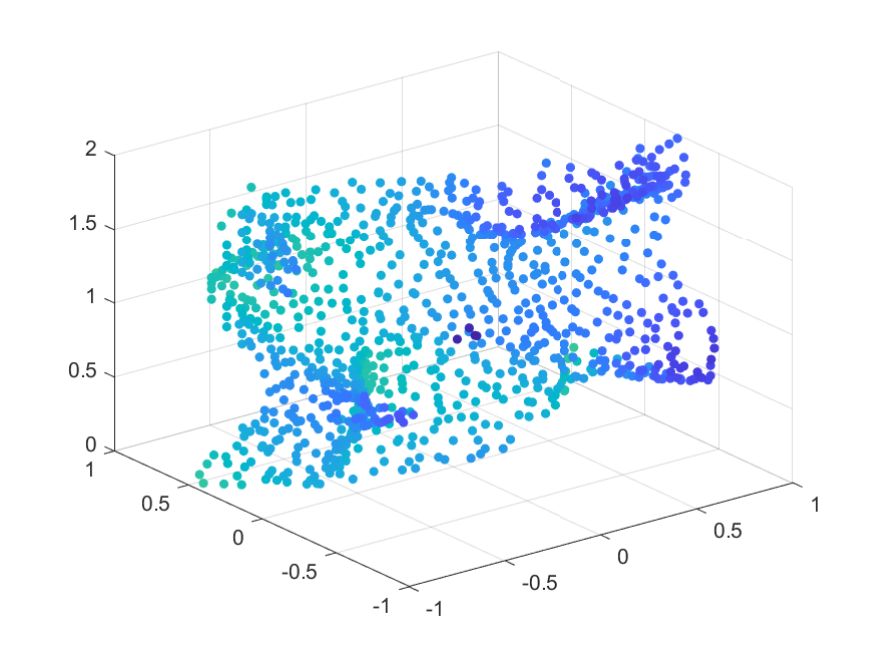}\\
    \includegraphics[width=0.16\textwidth]{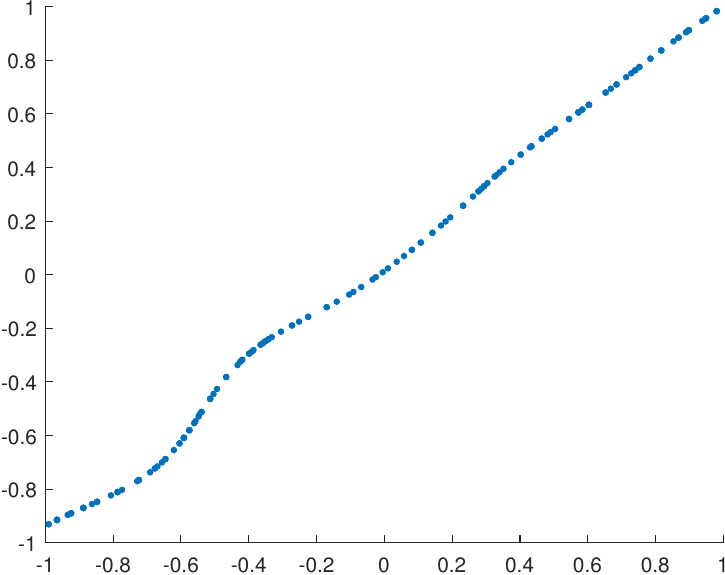}
    \includegraphics[width=0.16\textwidth]{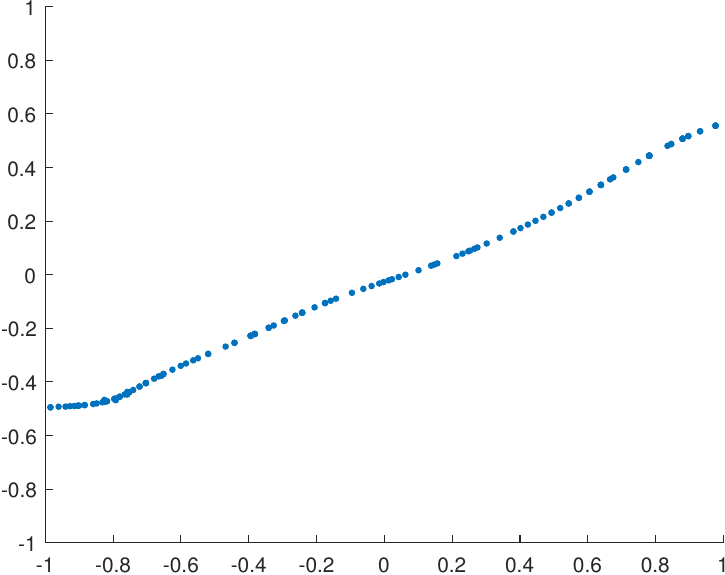}
    \includegraphics[width=0.16\textwidth]{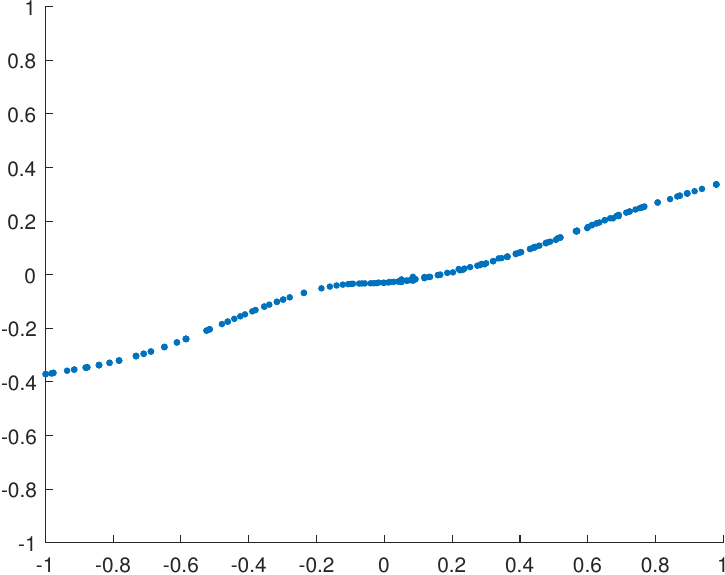}
    \includegraphics[width=0.16\textwidth]{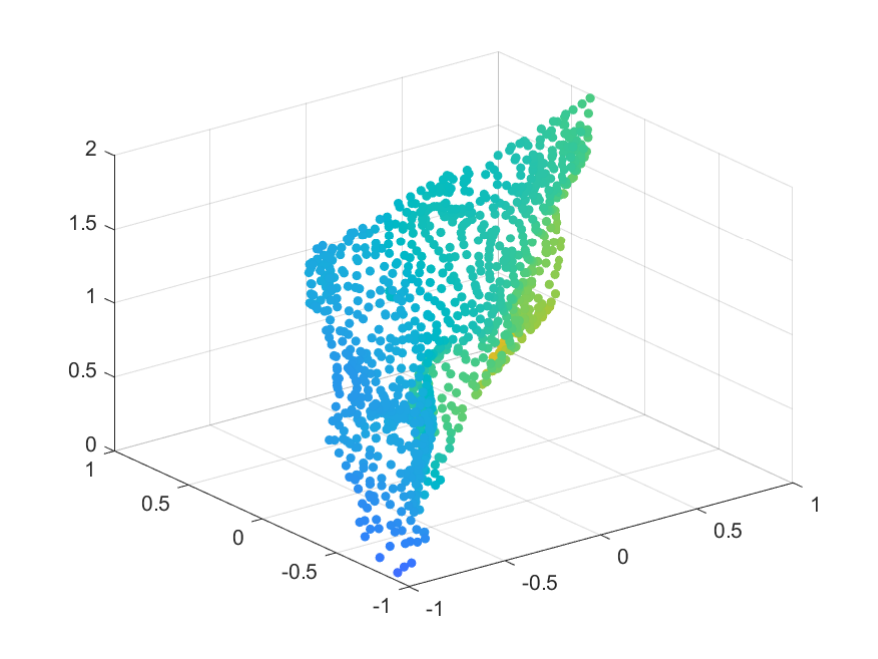}\\
    \includegraphics[width=0.16\textwidth]{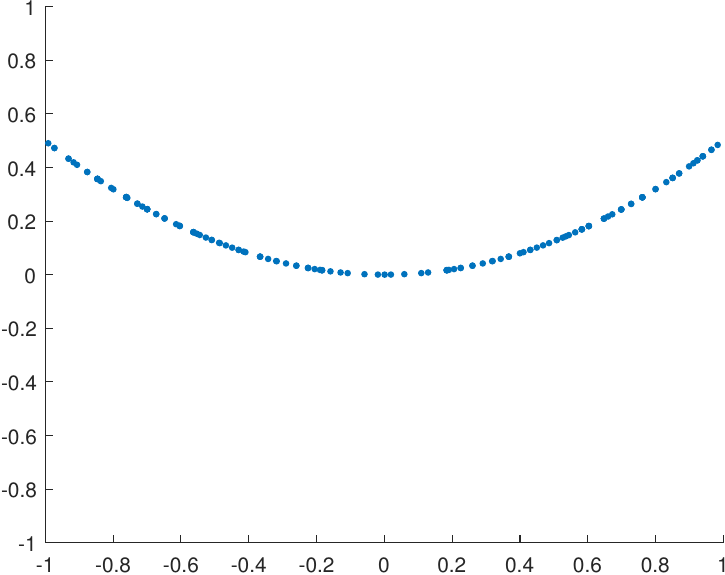}
    \includegraphics[width=0.16\textwidth]{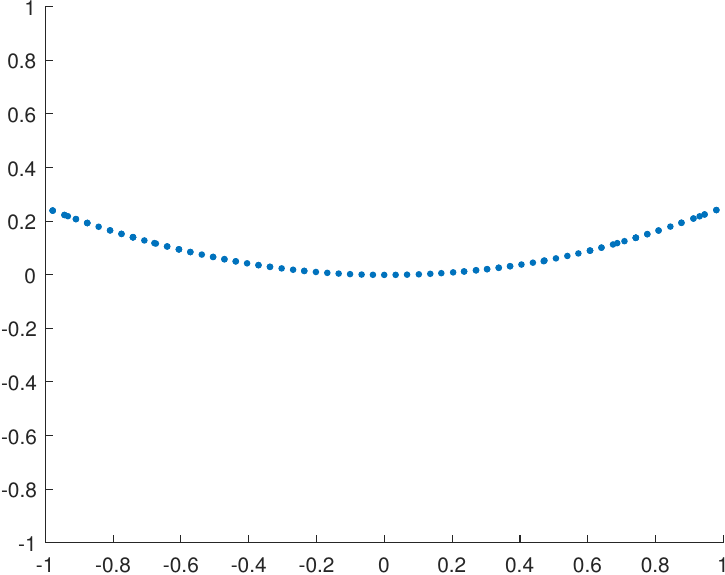}
    \includegraphics[width=0.16\textwidth]{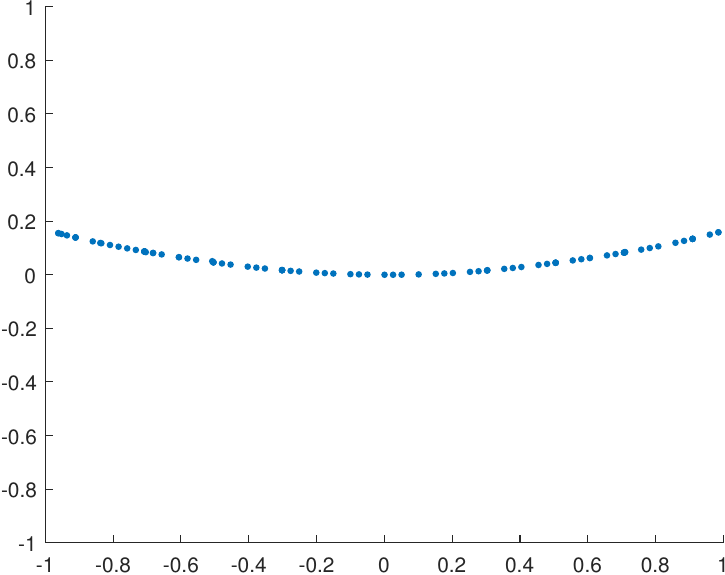}
    \includegraphics[width=0.16\textwidth]{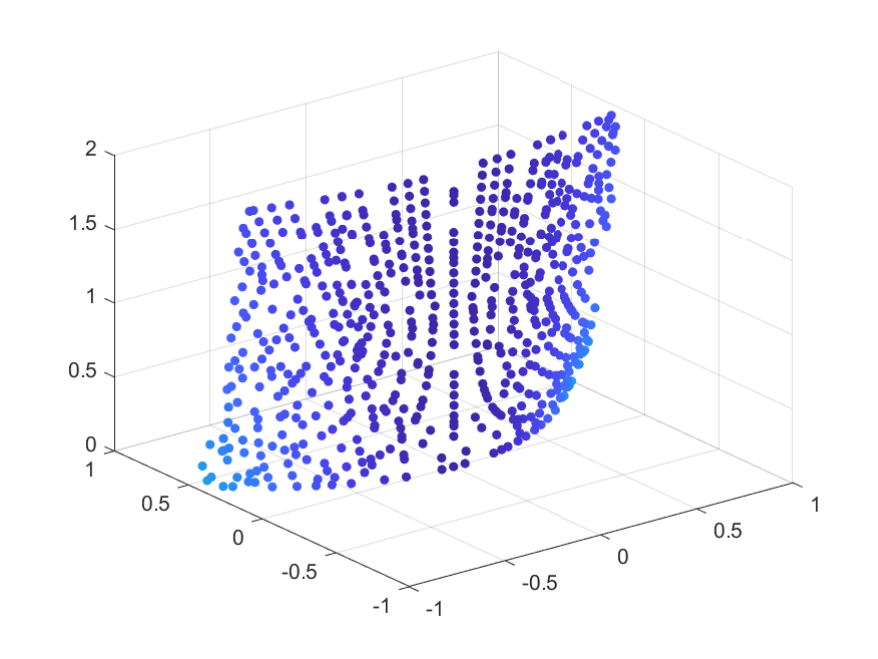}\\
    \includegraphics[width=0.16\textwidth]{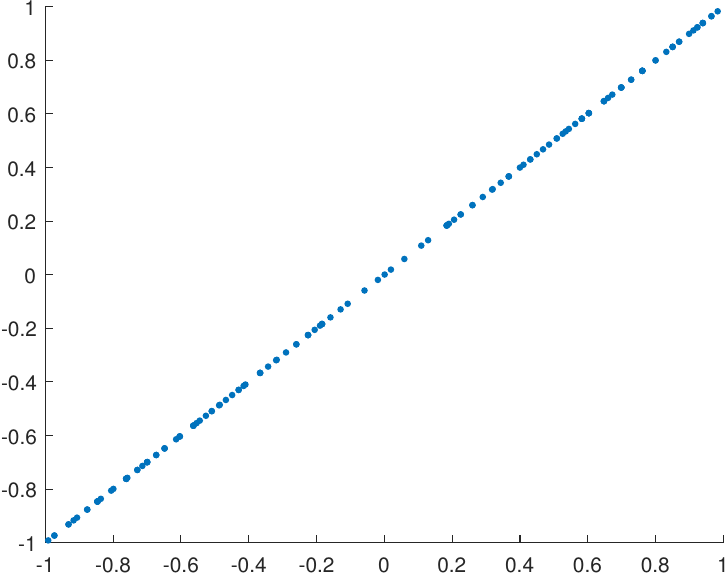}
    \includegraphics[width=0.16\textwidth]{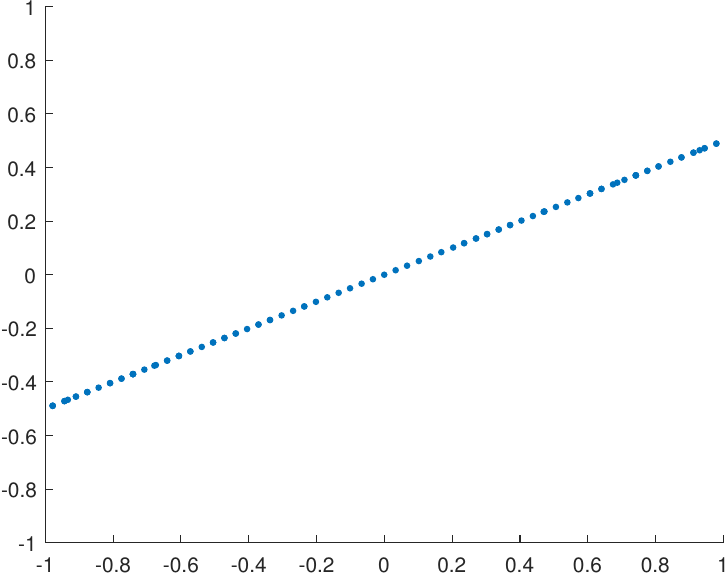}
    \includegraphics[width=0.16\textwidth]{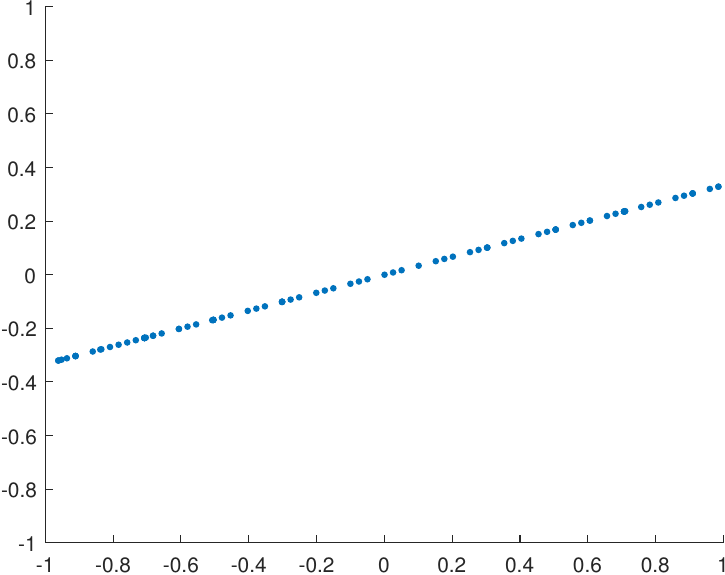}
    \includegraphics[width=0.16\textwidth]{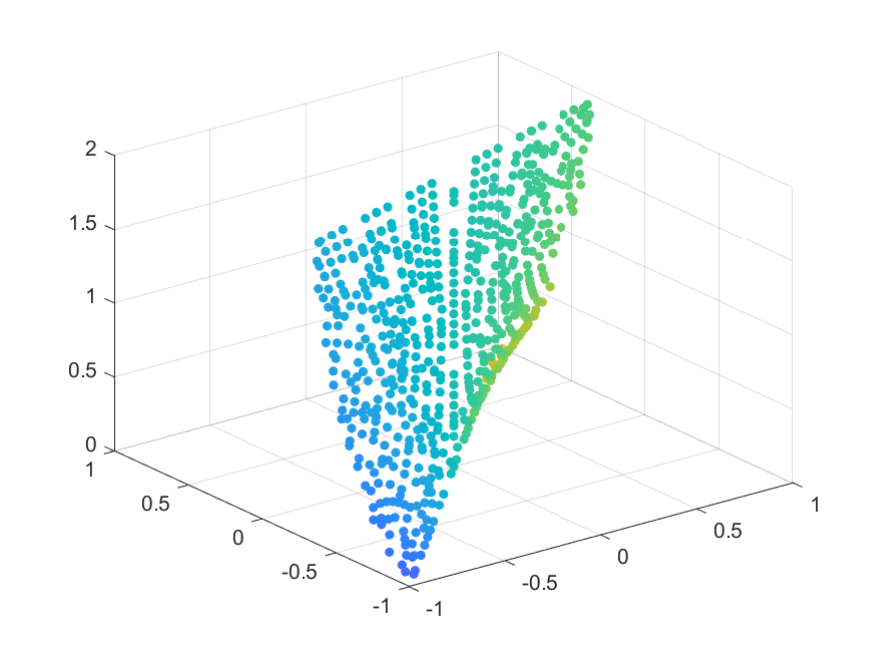}
    \caption{AG-RaNN method results for Case 1 of Example \ref{ex3-1}. Columns 1-3 display $S$ and $\partial_x S$ at $t=0,1,2$, respectively, while column 4 shows the zero level set. The top two rows use normal collocation points, and the bottom two rows use collocation set $\Lambda_A$ ($N_A^I=(42014,30178)$, $N_A^B=(2955,2719)$). Running times: $(t_1,t_2)=(3.74,4.82)$.}
    \label{fig:ex3-1_case1_HJ}
\end{figure}

\noindent\textbf{Case 2: Focusing at a point.}

In this case, the exact solution is
\begin{align}
    S(t,x)=\frac{x^2}{2(t-1)}
\end{align}
and the potential is $V(x)=0$. The numerical results are shown in Figure \ref{fig:ex3-1_case2_HJ}.

\begin{figure}[!htbp]
    \centering
    \includegraphics[width=0.16\textwidth]{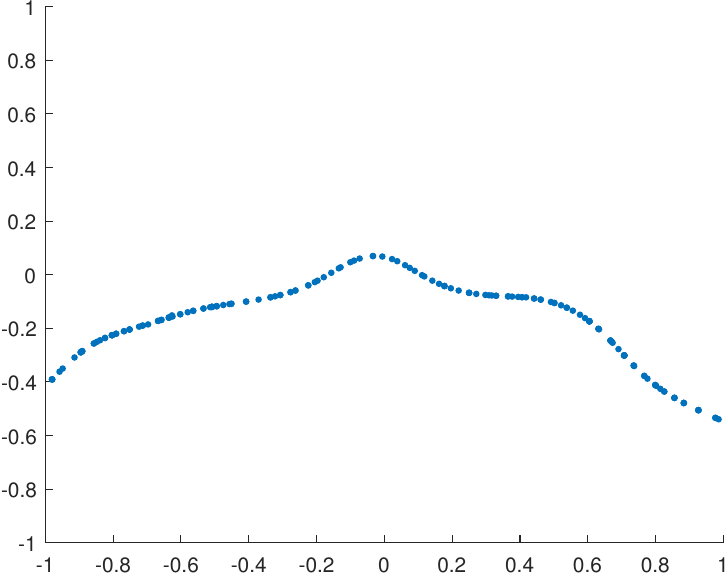}
    \includegraphics[width=0.16\textwidth]{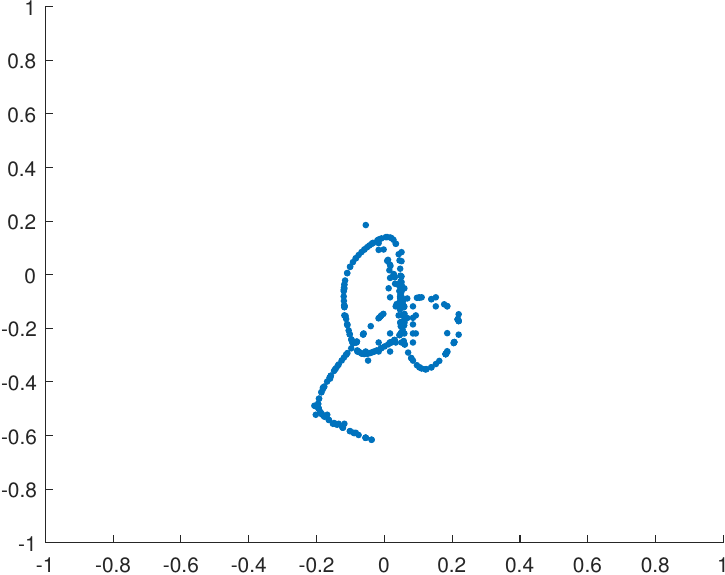}
    \includegraphics[width=0.16\textwidth]{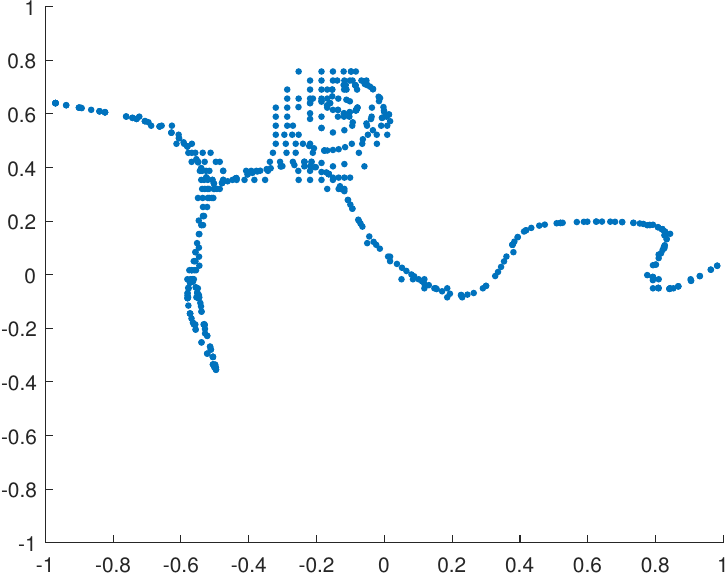}
    \includegraphics[width=0.16\textwidth]{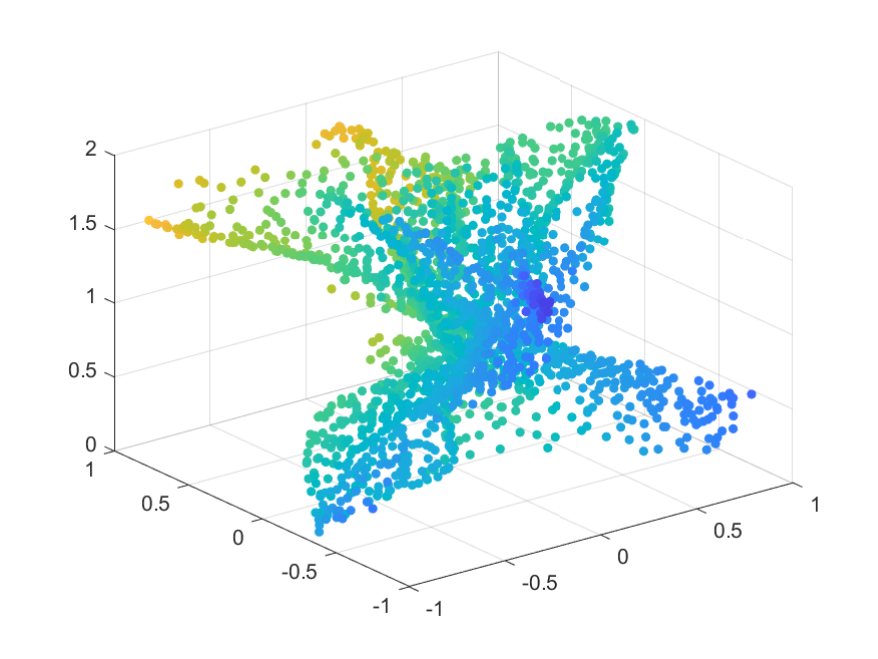}\\
    \includegraphics[width=0.16\textwidth]{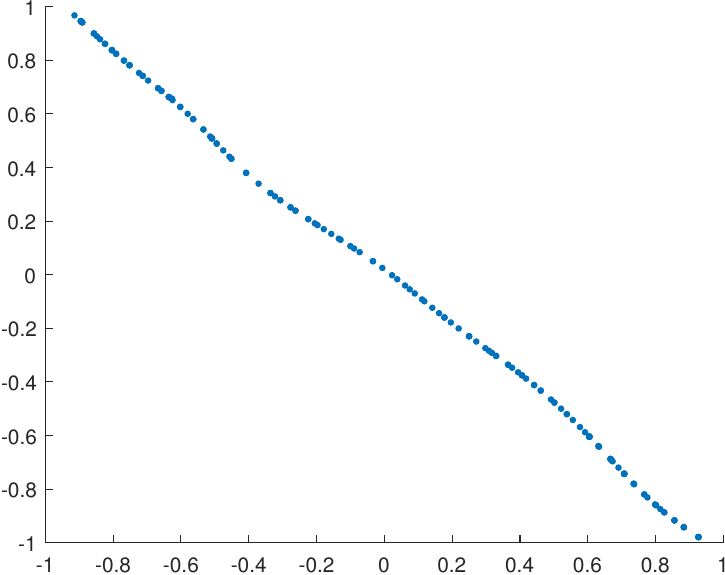}
    \includegraphics[width=0.16\textwidth]{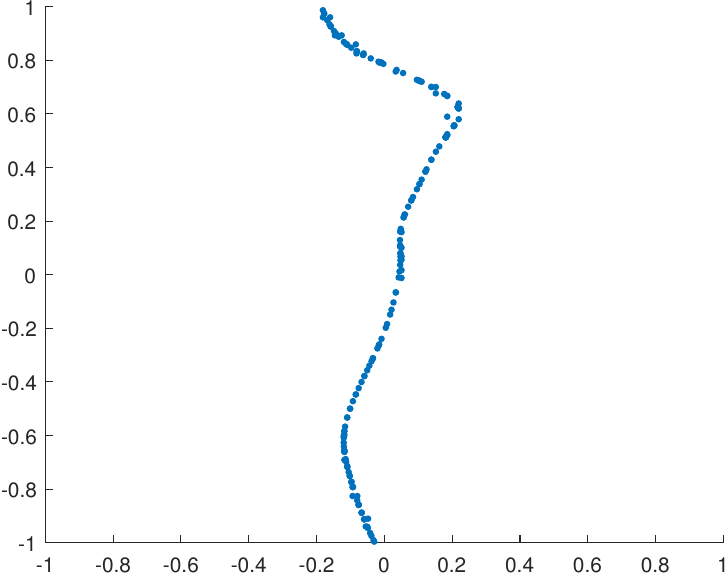}
    \includegraphics[width=0.16\textwidth]{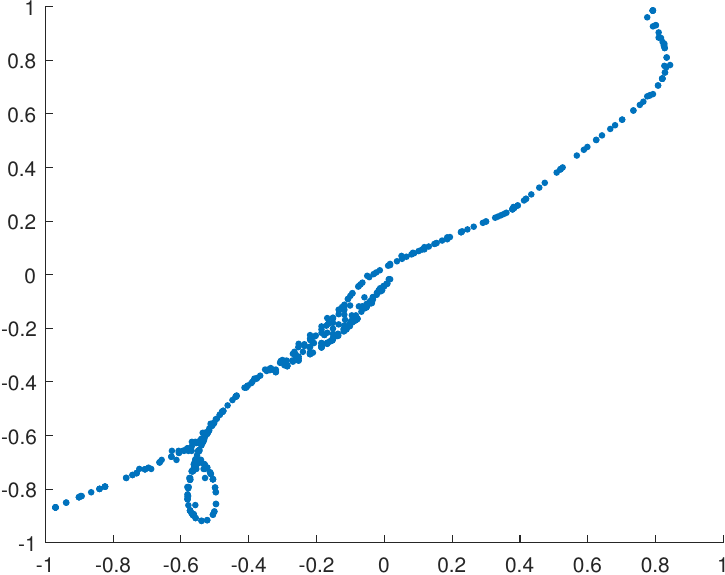}
    \includegraphics[width=0.16\textwidth]{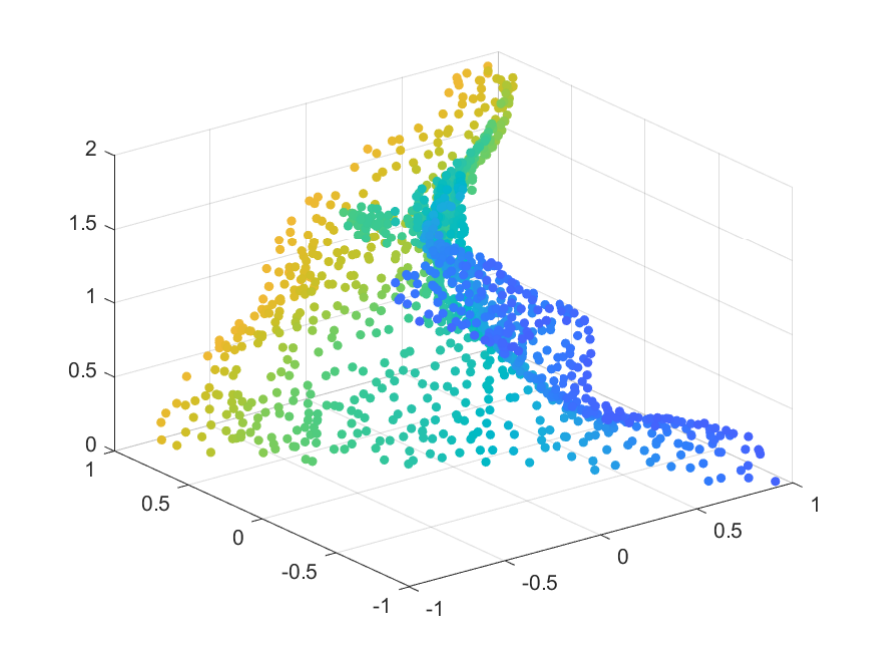}\\
    \includegraphics[width=0.16\textwidth]{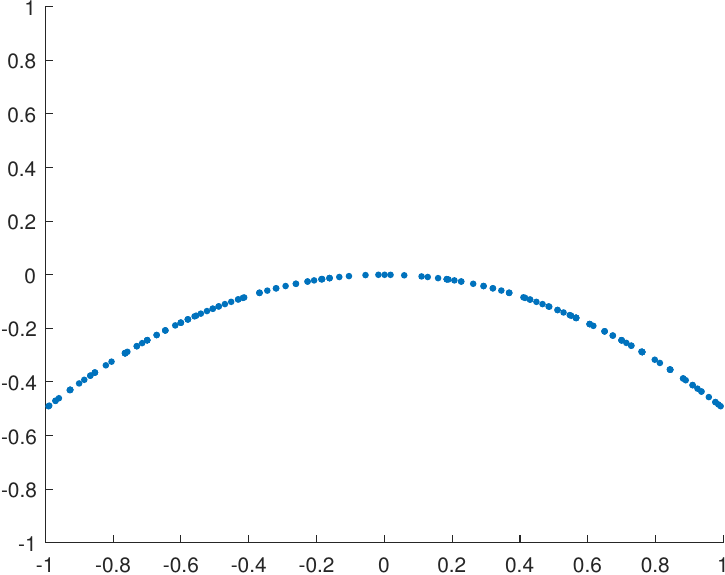}
    \includegraphics[width=0.16\textwidth]{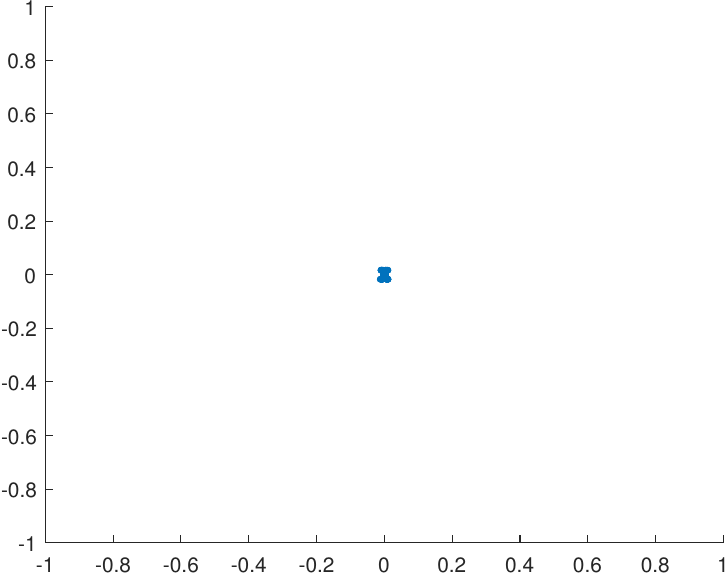}
    \includegraphics[width=0.16\textwidth]{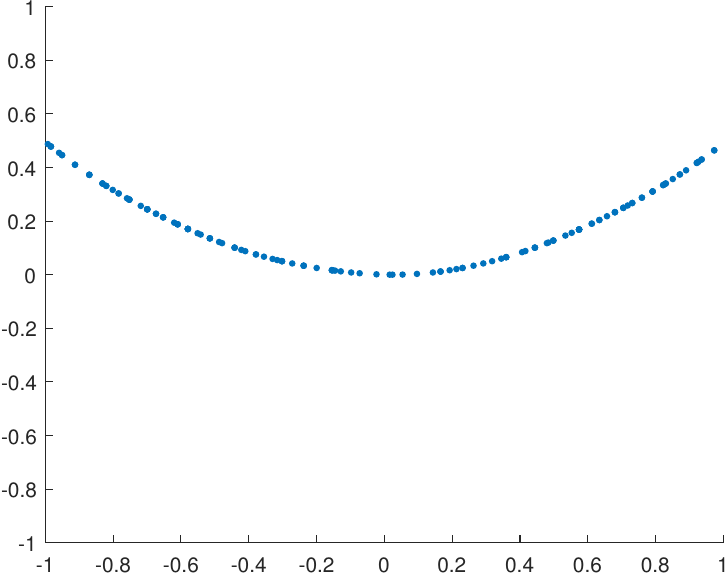}
    \includegraphics[width=0.16\textwidth]{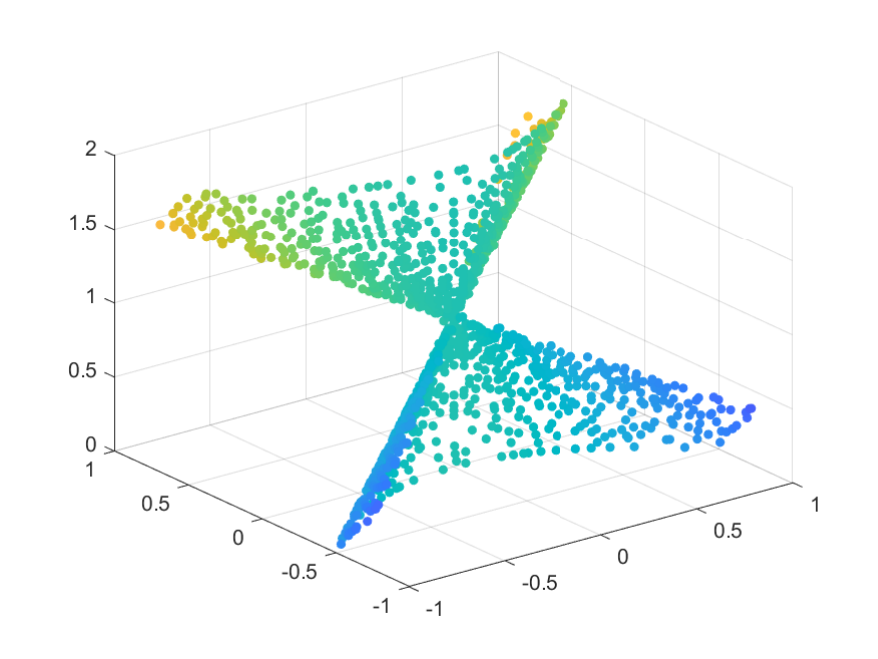}\\
    \includegraphics[width=0.16\textwidth]{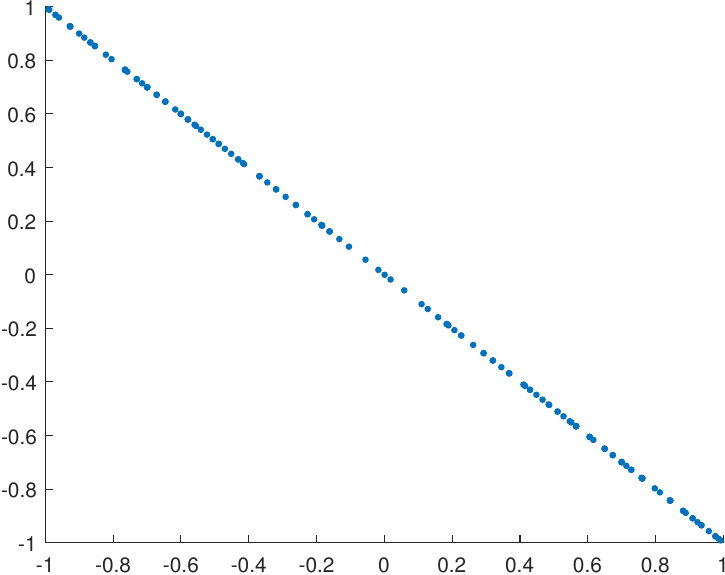}
    \includegraphics[width=0.16\textwidth]{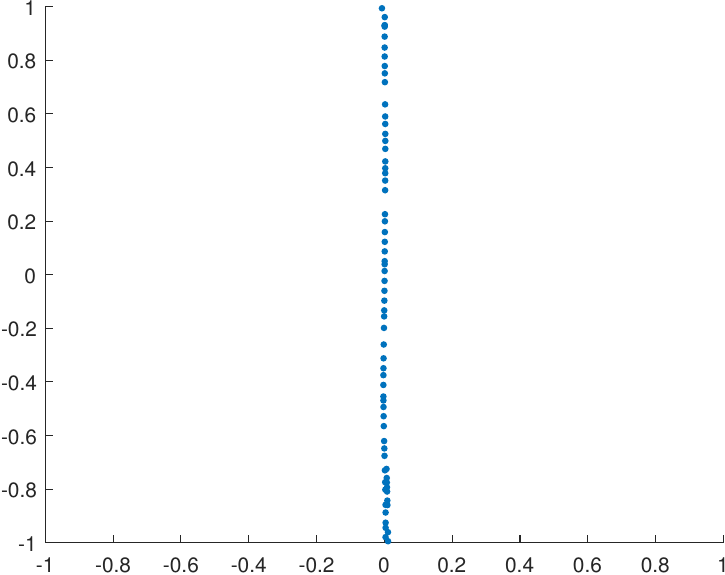}
    \includegraphics[width=0.16\textwidth]{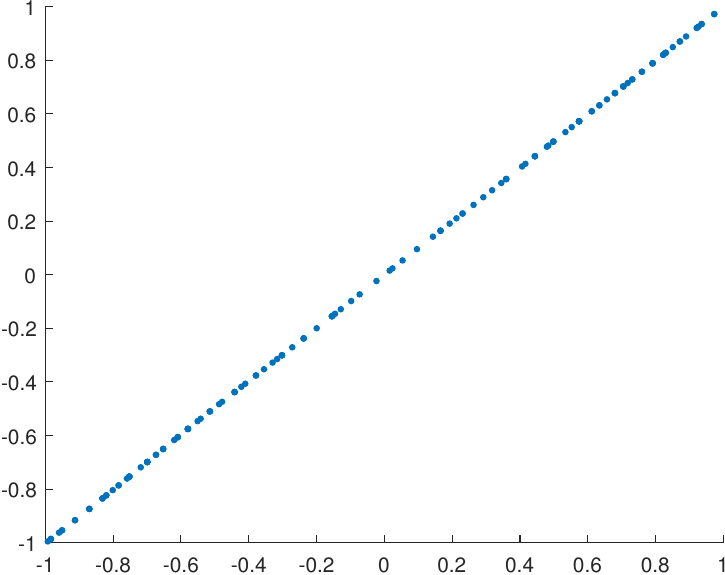}
    \includegraphics[width=0.16\textwidth]{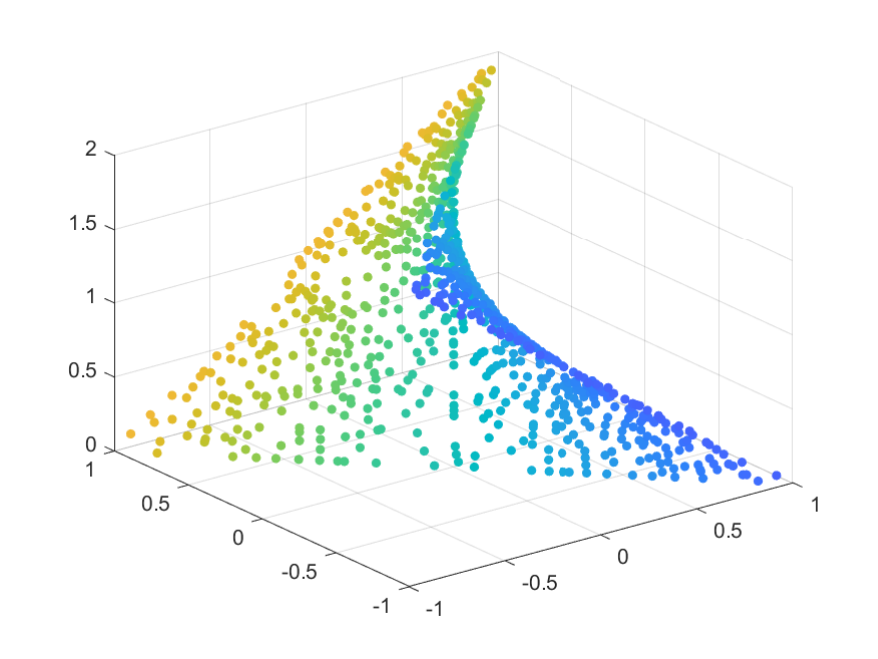}
    \caption{AG-RaNN method results for Case 2 of Example \ref{ex3-1}. Columns 1-3 display $S$ and $\partial_x S$ at $t=0,1,2$, respectively, while column 4 shows the zero level set. The top two rows use normal collocation points, and the bottom two rows use collocation set $\Lambda_A$ ($N_A^I=(53139,62047)$, $N_A^B=(2833,2766)$). Running times: $(t_1,t_2)=(3.75,6.18)$.}
    \label{fig:ex3-1_case2_HJ}
\end{figure}

\noindent\textbf{Case 3: Caustic.}

In this case, the initial condition is
\begin{align}
    S_0(x)=-\ln(\cosh(x))
\end{align}
and the potential is $V(x)=0$. The numerical results are shown in Figure \ref{fig:ex3-1_case3_HJ}.

\begin{figure}[!htbp]
    \centering
    \includegraphics[width=0.16\textwidth]{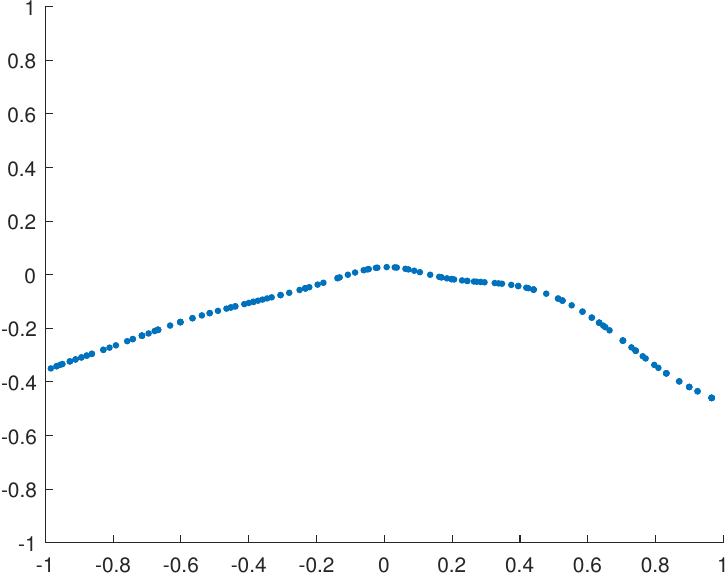}
    \includegraphics[width=0.16\textwidth]{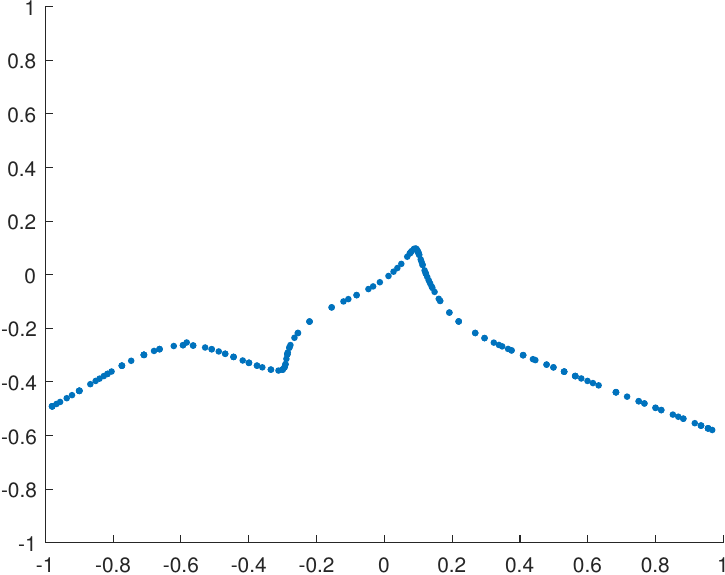}
    \includegraphics[width=0.16\textwidth]{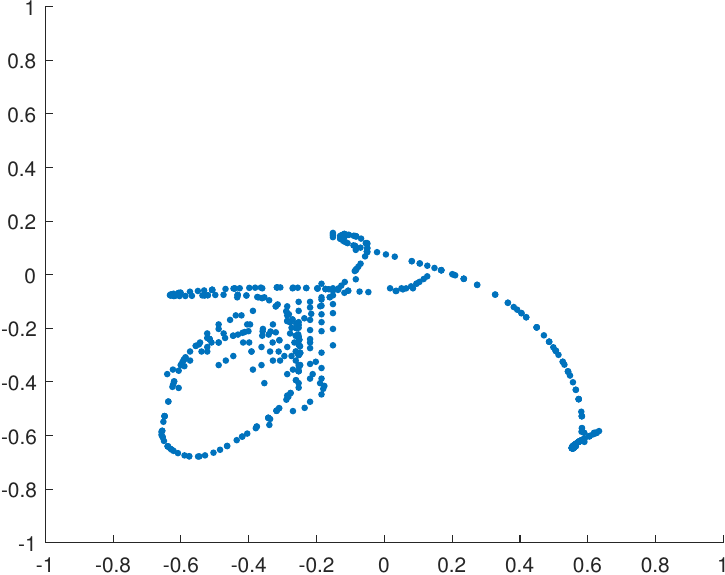}
    \includegraphics[width=0.16\textwidth]{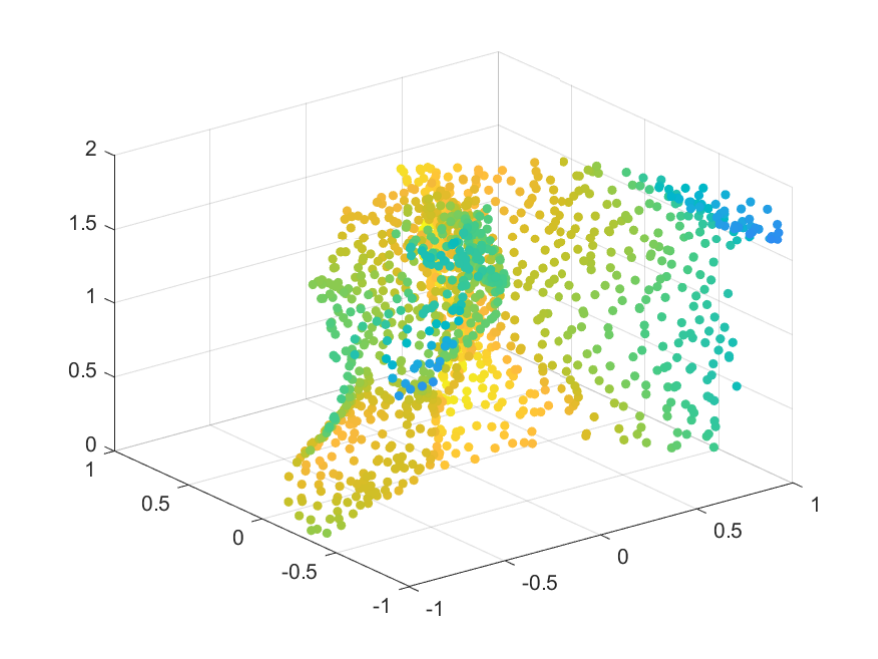}\\
    \includegraphics[width=0.16\textwidth]{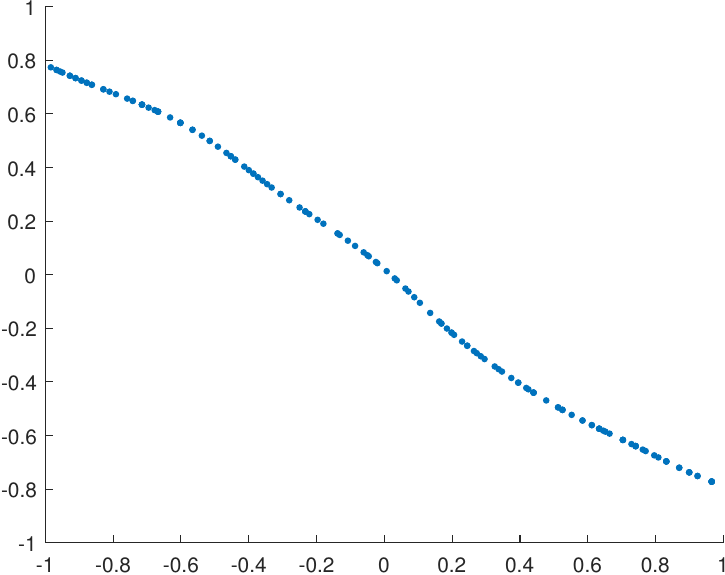}
    \includegraphics[width=0.16\textwidth]{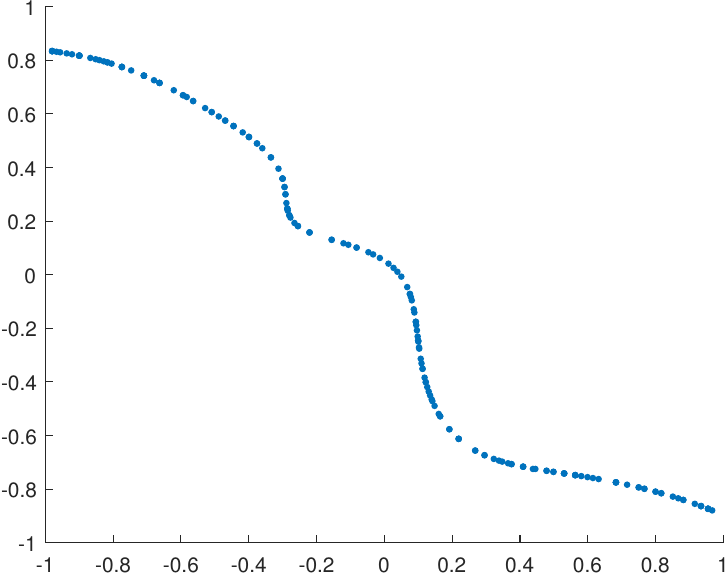}
    \includegraphics[width=0.16\textwidth]{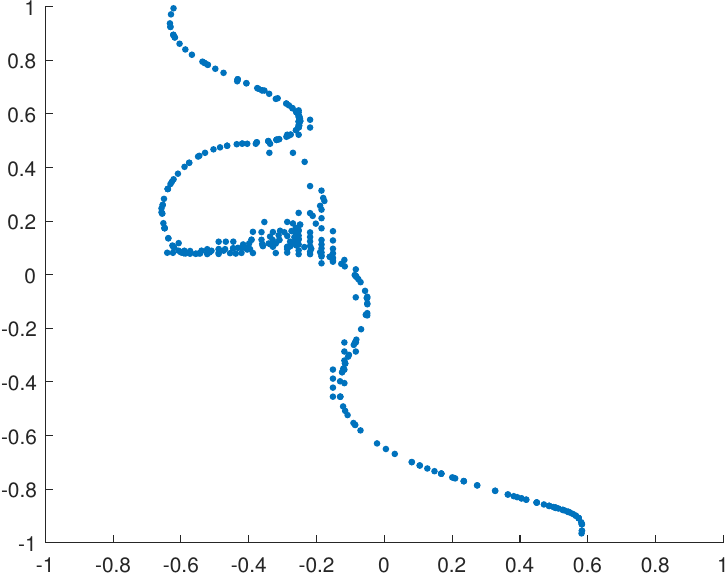}
    \includegraphics[width=0.16\textwidth]{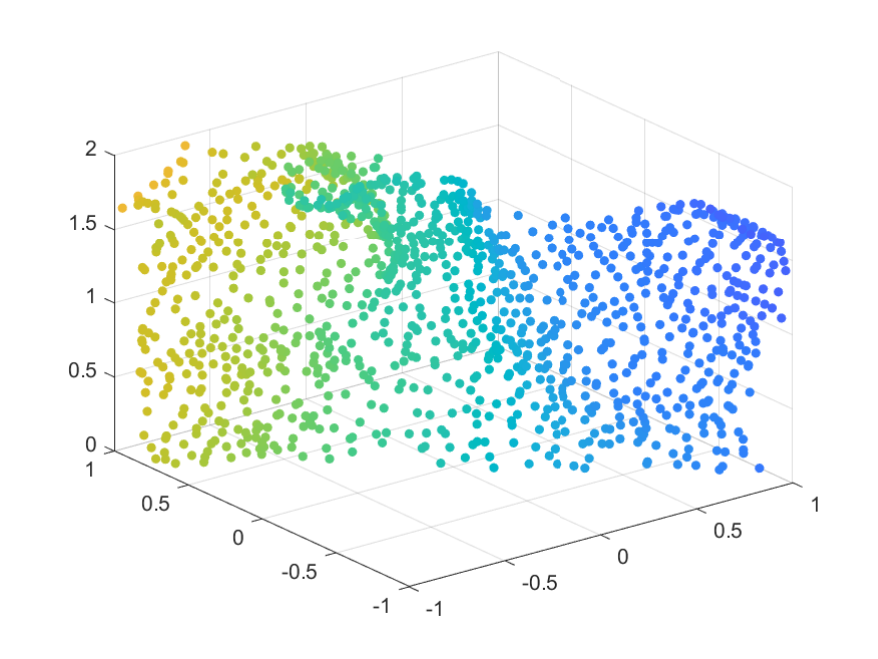}\\
    \includegraphics[width=0.16\textwidth]{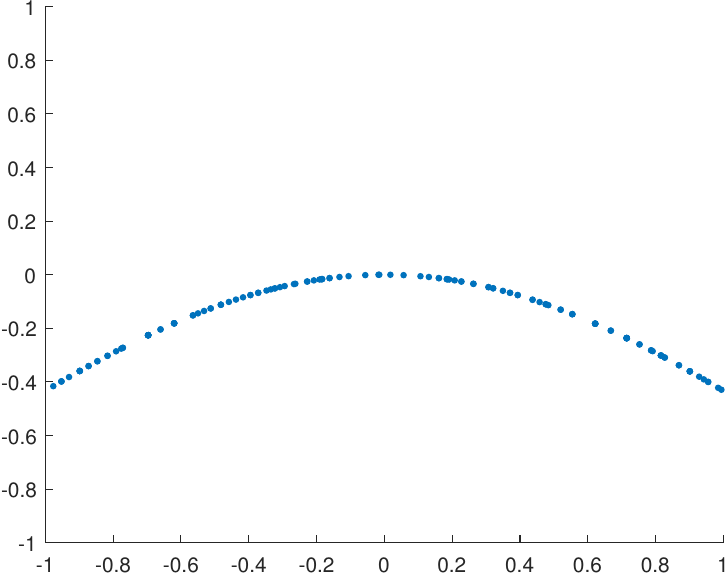}
    \includegraphics[width=0.16\textwidth]{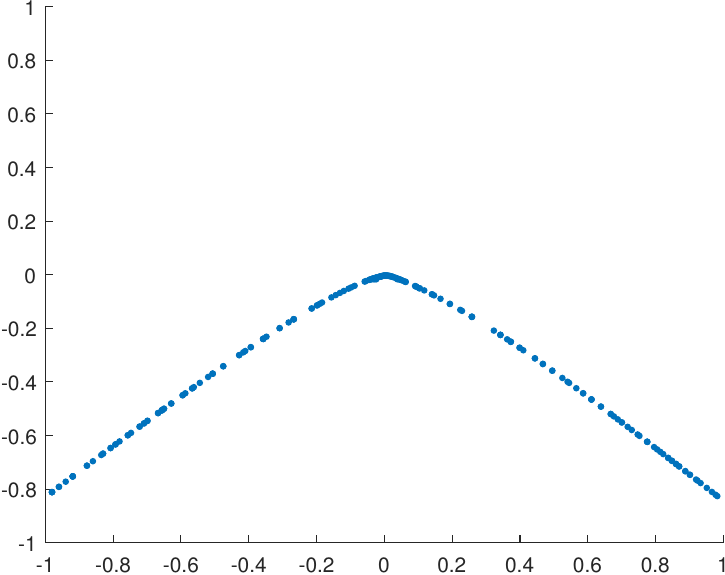}
    \includegraphics[width=0.16\textwidth]{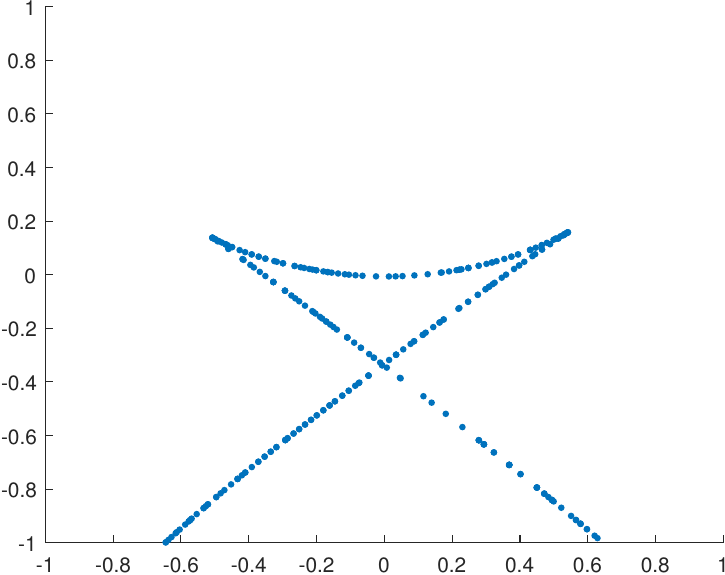}
    \includegraphics[width=0.16\textwidth]{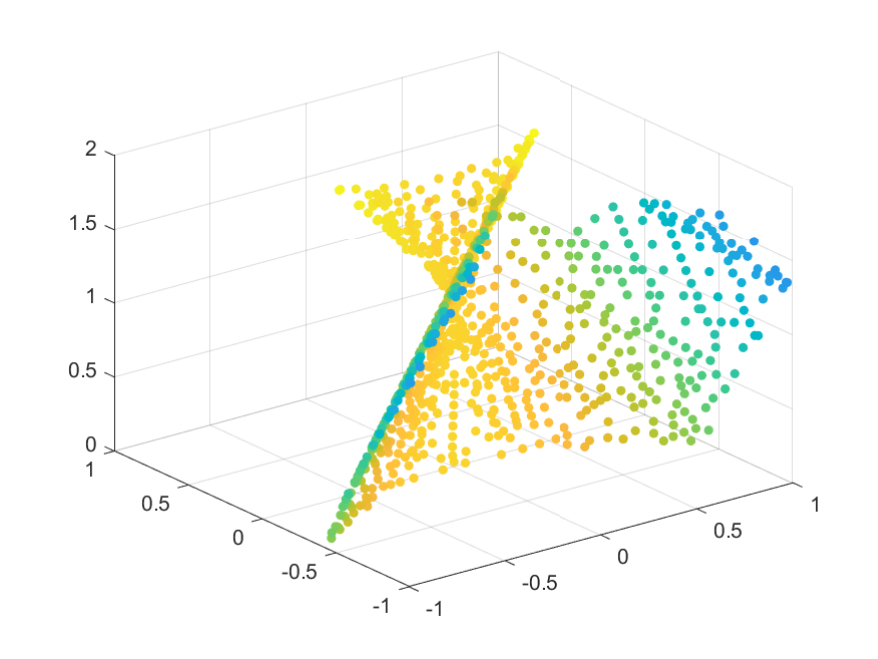}\\
    \includegraphics[width=0.16\textwidth]{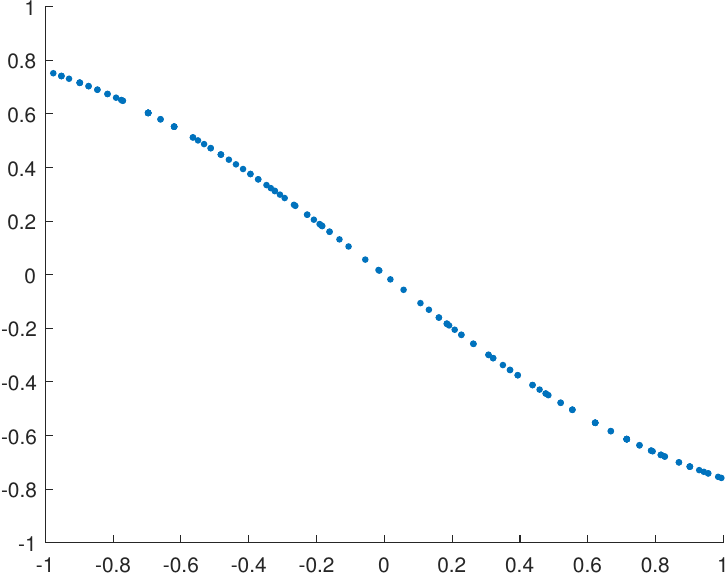}
    \includegraphics[width=0.16\textwidth]{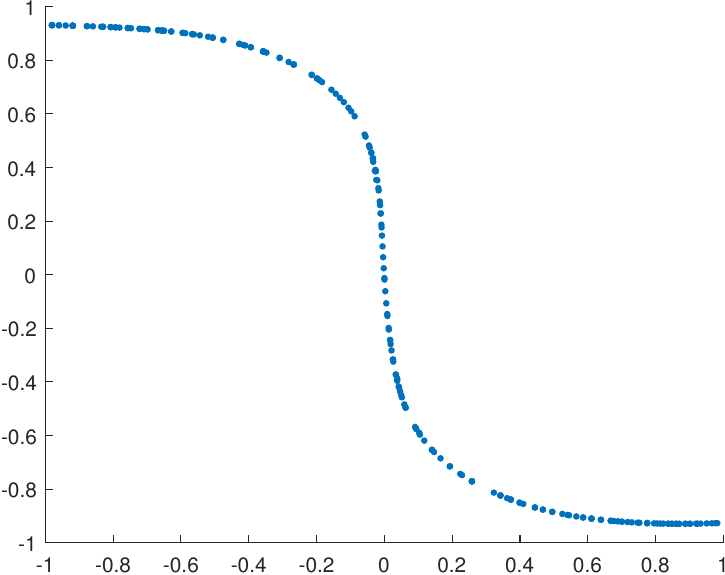}
    \includegraphics[width=0.16\textwidth]{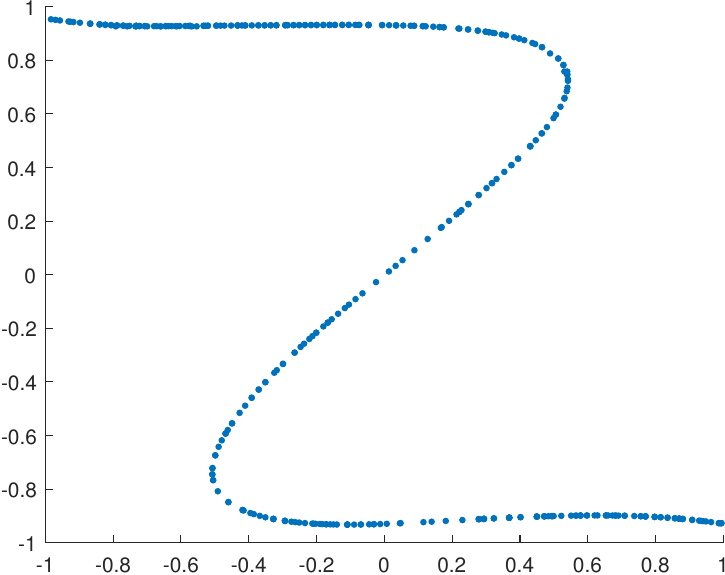}
    \includegraphics[width=0.16\textwidth]{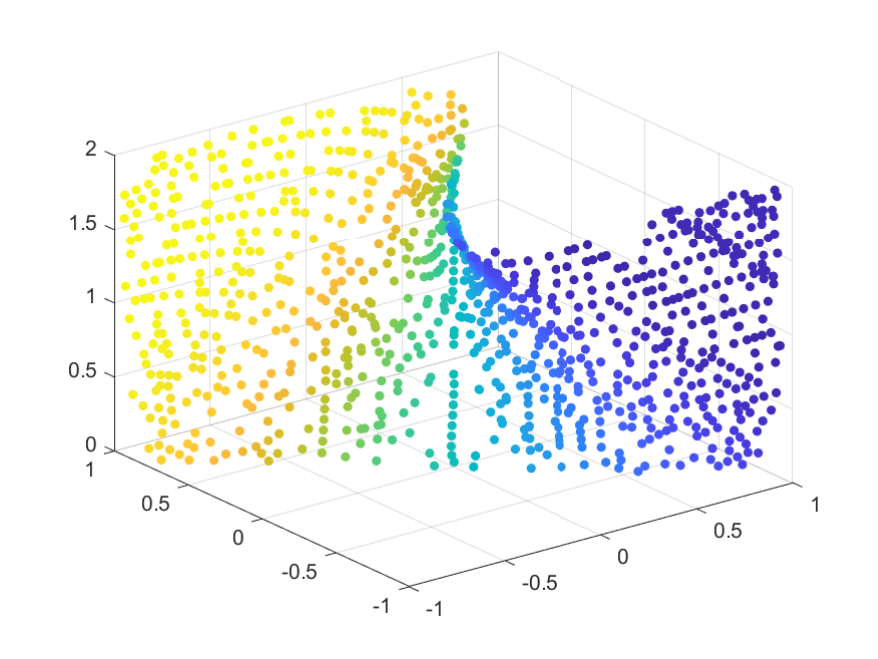}
    \caption{AG-RaNN method results for Case 3 of Example \ref{ex3-1}. Columns 1-3 display $S$ and $\partial_x S$ at $t=0,1,2$, respectively, while column 4 shows the zero level set. The top two rows use normal collocation points, and the bottom two rows use collocation set $\Lambda_A$ ($N_A^I=(58433,95927)$, $N_A^B=(2892,2942)$). Running times: $(t_1,t_2)=(3.77,7.49)$.}
    \label{fig:ex3-1_case3_HJ}
\end{figure}

\noindent\textbf{Case 4: The harmonic oscillator.}

In this case, the exact solution is
\begin{align}
    S(t,x)=-\frac{1}{2}(x^2+1)\tan(t)+\frac{x}{\cos(t)}
\end{align}
and the potential is $V(x)=x^2/2$. The numerical results are shown in Figure \ref{fig:ex3-1_case4_HJ}.

\begin{figure}[!htbp]
    \centering
    \includegraphics[width=0.16\textwidth]{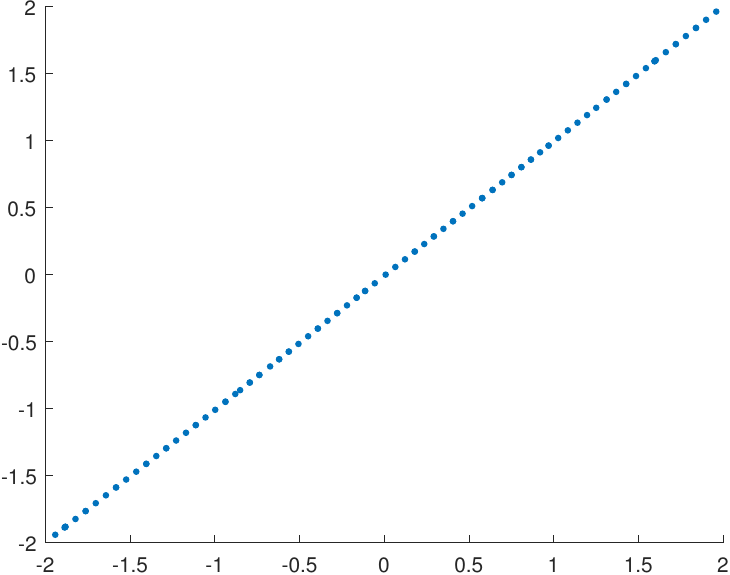}
    \includegraphics[width=0.16\textwidth]{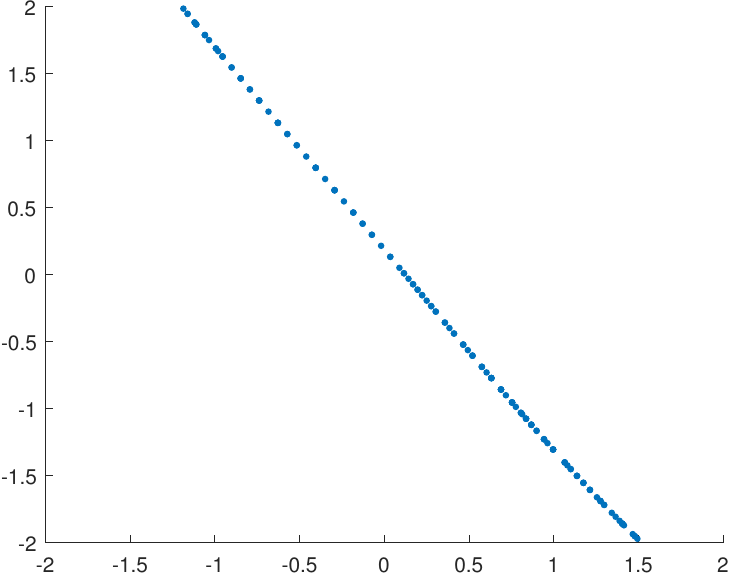}
    \includegraphics[width=0.16\textwidth]{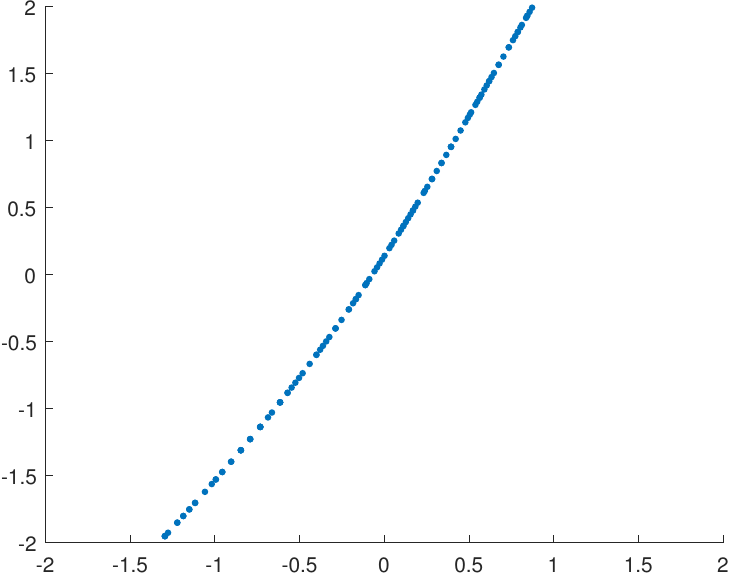}
    \includegraphics[width=0.16\textwidth]{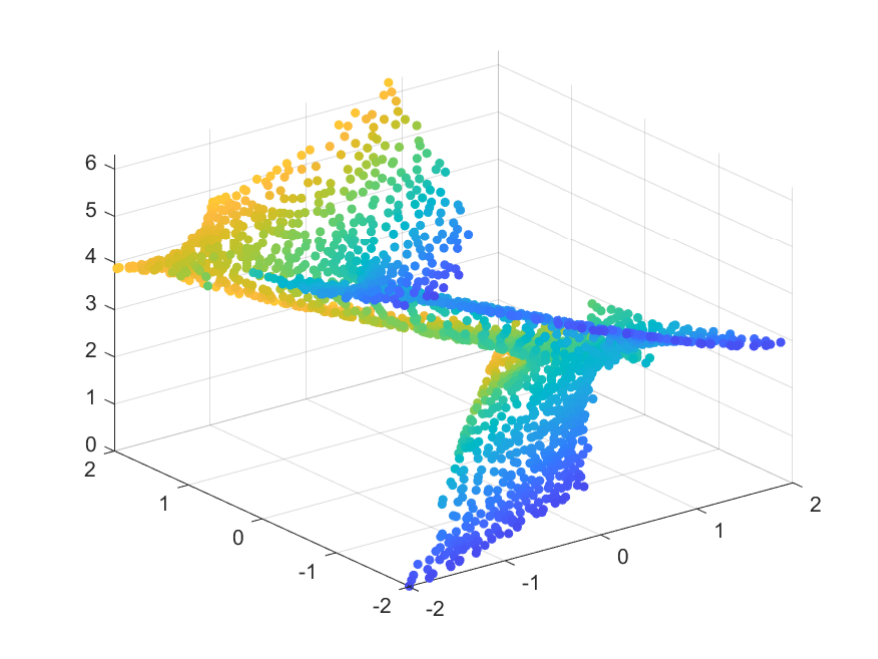}\\
    \includegraphics[width=0.16\textwidth]{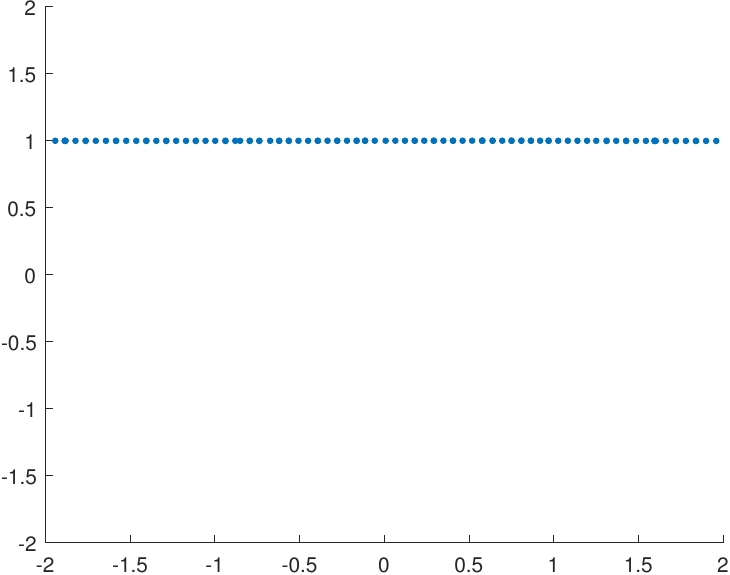}
    \includegraphics[width=0.16\textwidth]{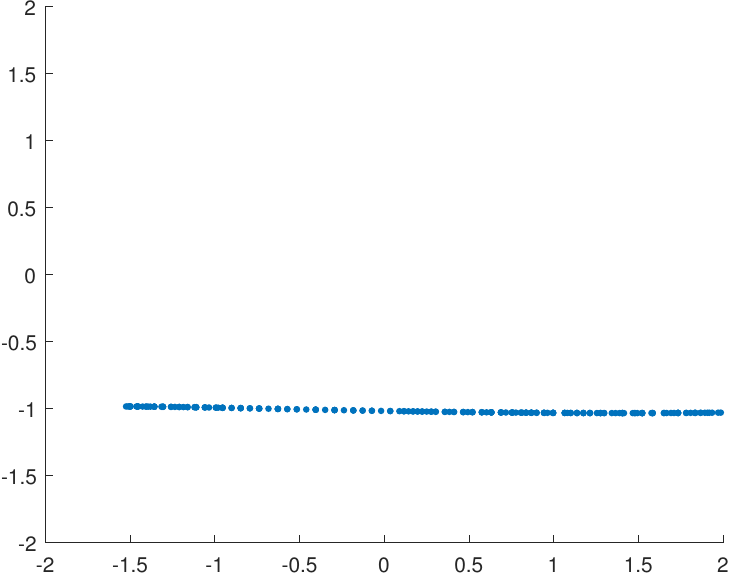}
    \includegraphics[width=0.16\textwidth]{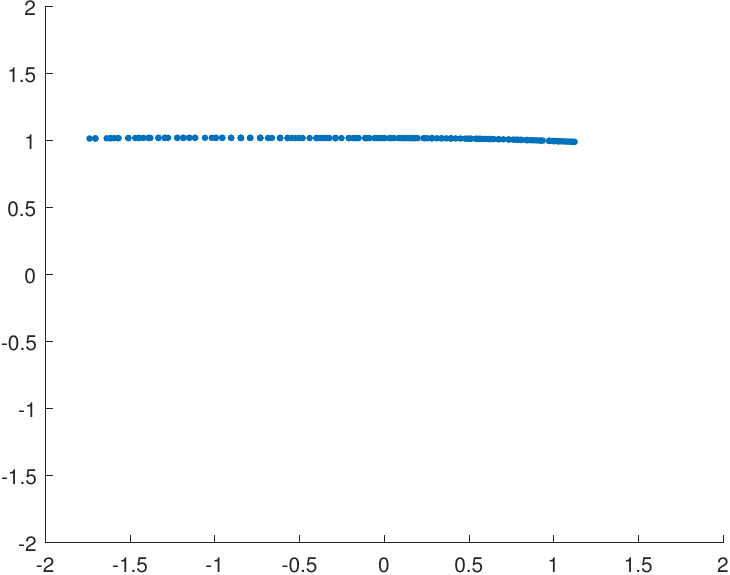}
    \includegraphics[width=0.16\textwidth]{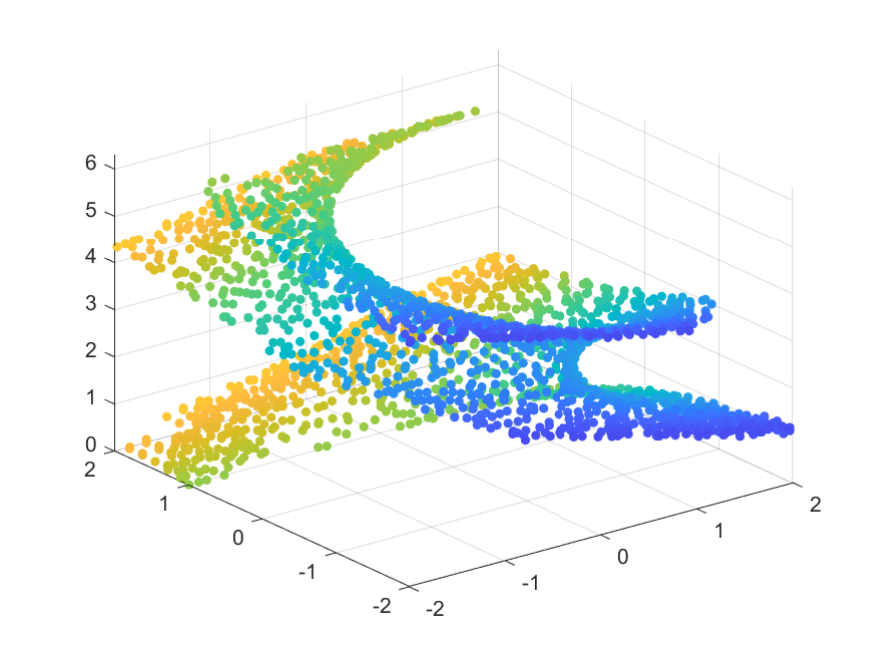}\\
    \includegraphics[width=0.16\textwidth]{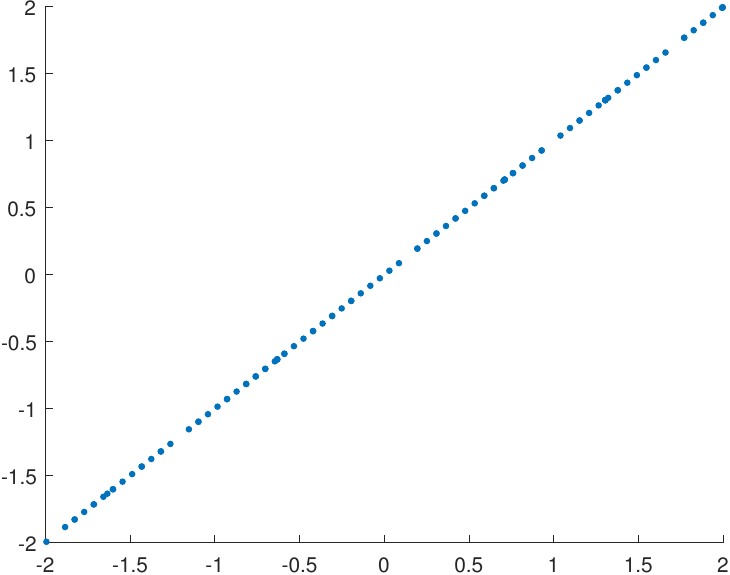}
    \includegraphics[width=0.16\textwidth]{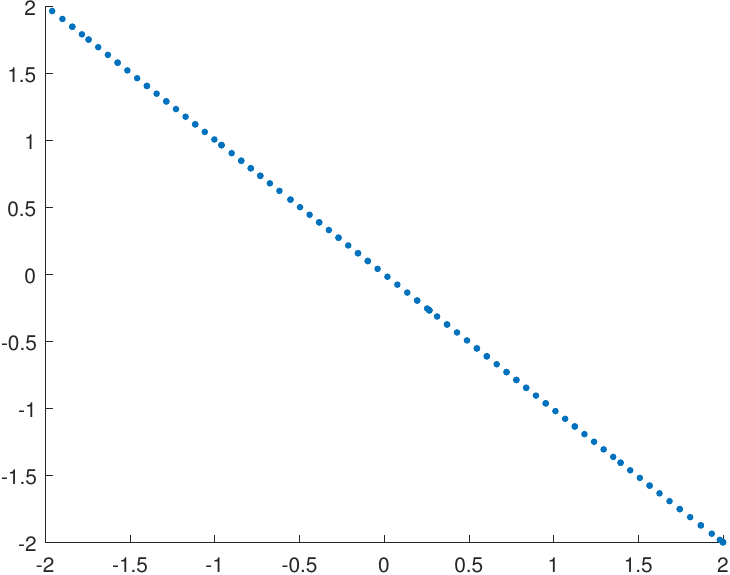}
    \includegraphics[width=0.16\textwidth]{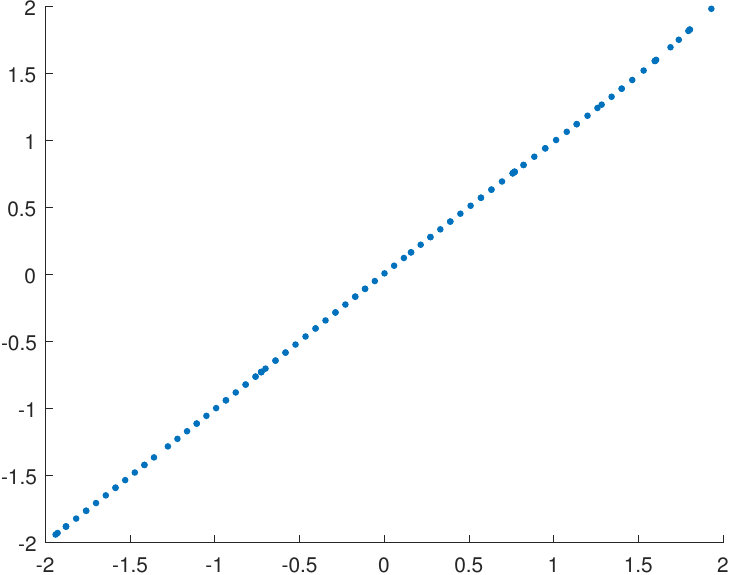}
    \includegraphics[width=0.16\textwidth]{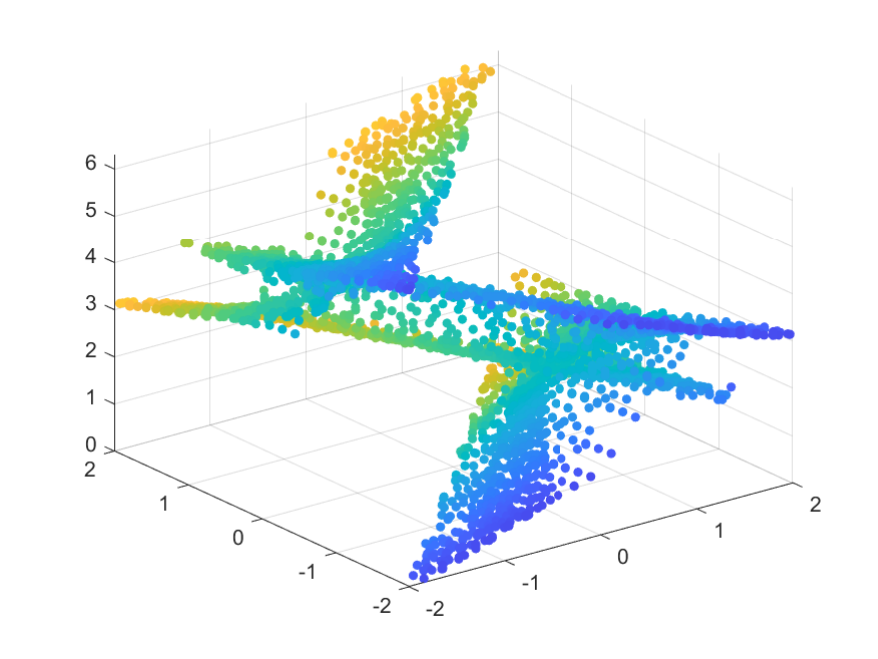}\\
    \includegraphics[width=0.16\textwidth]{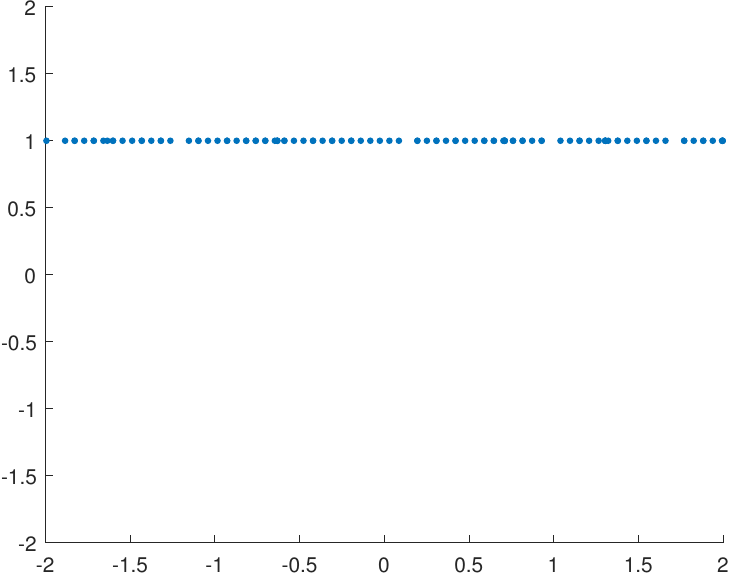}
    \includegraphics[width=0.16\textwidth]{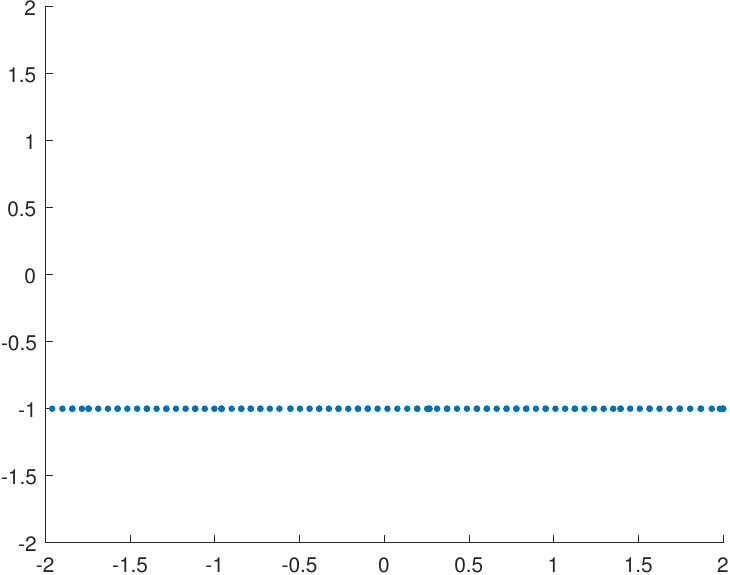}
    \includegraphics[width=0.16\textwidth]{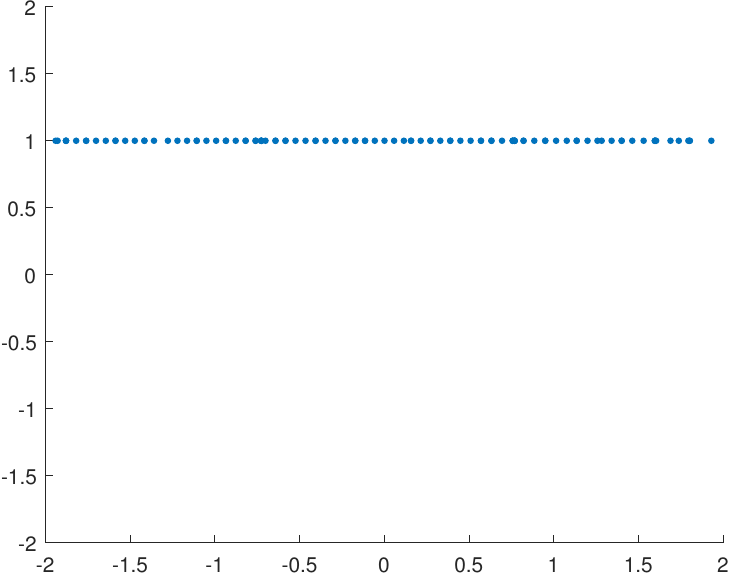}
    \includegraphics[width=0.16\textwidth]{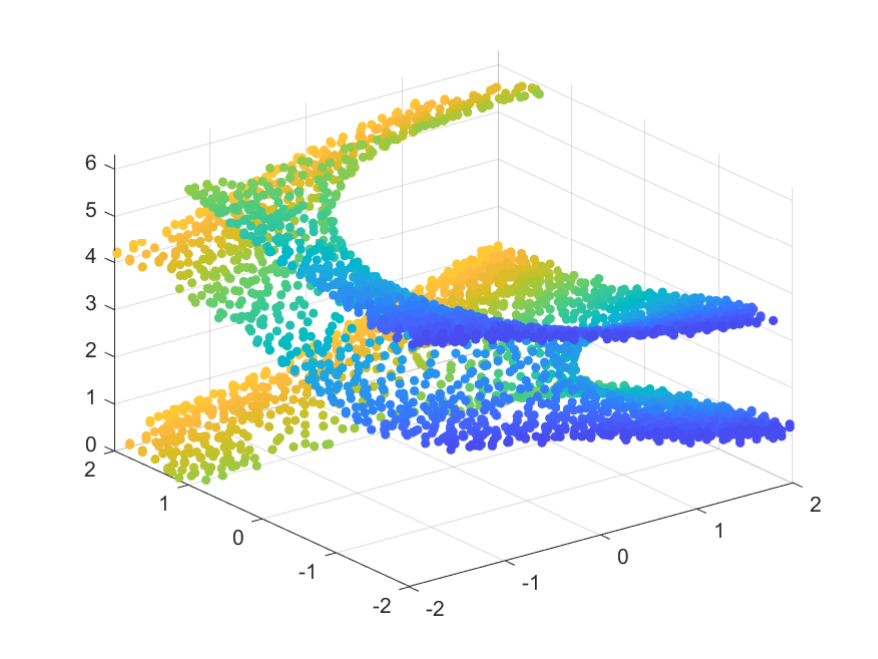}
    \caption{AG-RaNN method results for Case 4 of Example \ref{ex3-1}. Columns 1-3 display $S$ and $\partial_x S$ at $t=0,\pi,2\pi$, respectively, while column 4 shows the zero level set. The top two rows use normal collocation points, and the bottom two rows use collocation set $\Lambda_A$ ($N_A^I=(64218,76954)$, $N_A^B=(3264,3485)$). Running times: $(t_1,t_2)=(11.70,6.99)$.}
    \label{fig:ex3-1_case4_HJ}
\end{figure}

This example is the same as in \cite{Liu2006Levelset}, in order to compute the gradient of $S$ and $S$, we transform the original two-dimensional problem $S(t,x)$ into a four-dimensional problem $\phi_1(t,x,z,p)$ and $\phi_2(t,x,z,p)$. The intersection of the zero level sets
\begin{align*}
    \begin{cases}
        \phi_1(t,x,z,p)=0\\\phi_2(t,x,z,p)=0
    \end{cases}
\end{align*}
is a two-dimensional surface in the four-dimensional space. Then we project it to $p$ and $z$, and get two surfaces $z=S(t,x)$ (the right subfigures in the first and third rows of Figure \ref{fig:ex3-1_case1_HJ}, \ref{fig:ex3-1_case2_HJ}, \ref{fig:ex3-1_case3_HJ} and \ref{fig:ex3-1_case4_HJ}) and $p=\partial_xS(t,x)$ (the right subfigures in the second and fourth rows of Figure \ref{fig:ex3-1_case1_HJ}, \ref{fig:ex3-1_case2_HJ}, \ref{fig:ex3-1_case3_HJ} and \ref{fig:ex3-1_case4_HJ}). From the numerical results, these results are similar to the results of Example \ref{ex3}, and all have good numerical accuracy.

\begin{example}[2D Hamilton--Jacobi Equation] \label{ex4}
Finally, we consider the following 2D Hamilton--Jacobi equation:
\begin{align}
    \partial_t S+\frac{1}{2}|\nabla_{\bx}S|^2&= 0,\quad(t,\bx)\in(0,\infty)\times\Real^2,\\
    S(0,\bx) &= S_0(\bx),\quad \bx\in\Real^2.
\end{align}
Taking the gradient $\bv=\nabla_{\bx}S$, one has 
\begin{align}
    \partial_t\bv+\bv\cdot\nabla_{\bx}\bv&= 0, \quad(t,\bx)\in(0,\infty)\times\Real^2,\label{eq:ex4_1}\\
    \bv(0,\bx) = \bv_0(\bx) : &= \nabla_{\bx}S_0(\bx),\quad \bx\in\Real^2.\label{eq:ex4_2}
\end{align}

\end{example}
According to the characteristics method, the solution of \eqref{eq:ex4_1}-\eqref{eq:ex4_2} has the following form
\begin{align}
   \bv(t,\bx)=\bv_0\big(\bx-t\,\bv(t,\bx)\big), \label{eq:ex4_3}
\end{align}
which is an implicit equation. We solve the implicit equation \eqref{eq:ex4_3} for $\bv(t,\bx)$. We take
\begin{align}
    S_0(\bx)=\frac{0.45}{\pi}(\sin(\pi x_1)-1)(\sin(\pi x_2)-1)\quad\text{and}\quad\bv_0(\bx)=\nabla_{\bx}S_0(\bx).
\end{align}
We present all parameters for each case in Table \ref{tab:ex4_parameter}. The exact solutions and numerical results are shown in Figure \ref{fig:ex4}.
\begin{table}[!htbp]
    \centering
    \begin{tabular}{|c|c|c|c|c|c|c|c|c|c|c|c|c|c|c|c|}
    \hline
    $N^I$ & $N^B$ & E & Var & $N$ & $\varepsilon_A$ & T & $\Omega$ & $\br_1$ \\ \hline
    500000 & 50000 & (0,0,0,0) & (1,1,1,1) & $20^5$ & 0.5 & 1 & $[-1.2,1.2]^4$ & (2,2,2,2,2) \\ \hline
    \end{tabular}
    \caption{Parameters used in Example \ref{ex4}. }
    \label{tab:ex4_parameter}
\end{table}

\begin{figure}[!htbp]
    \centering
    \includegraphics[width=0.20\textwidth]{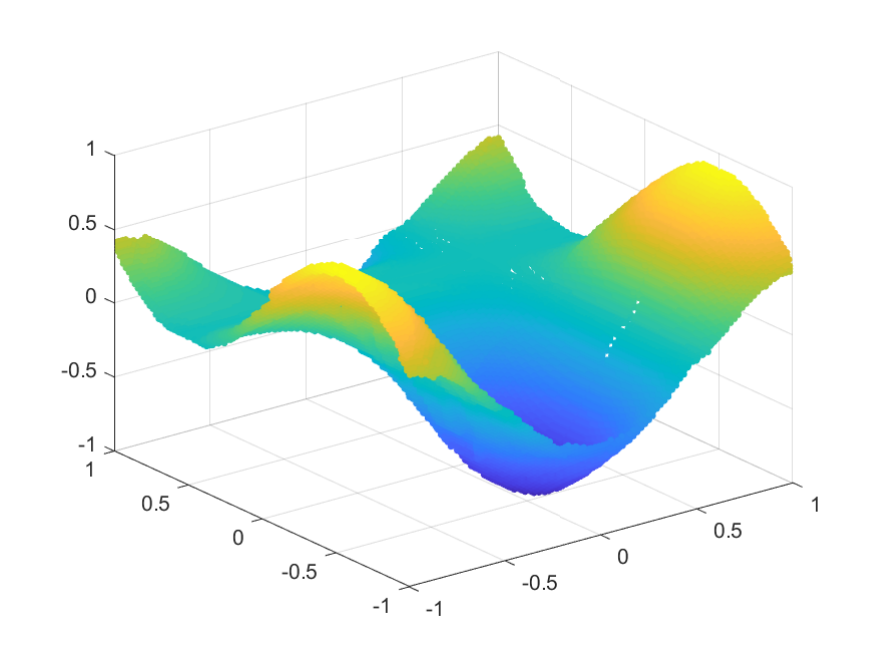}
    \includegraphics[width=0.20\textwidth]{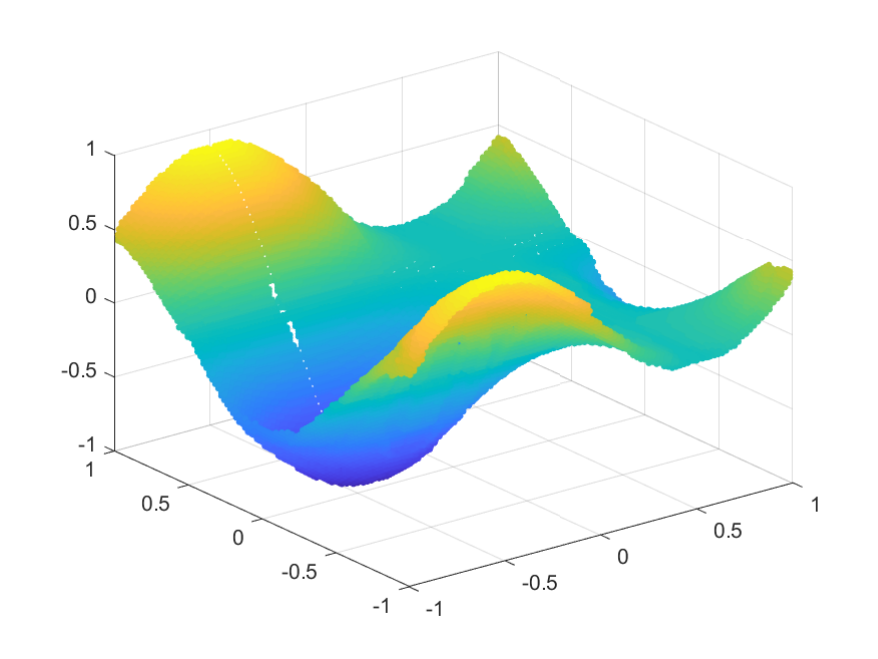}
    \includegraphics[width=0.20\textwidth]{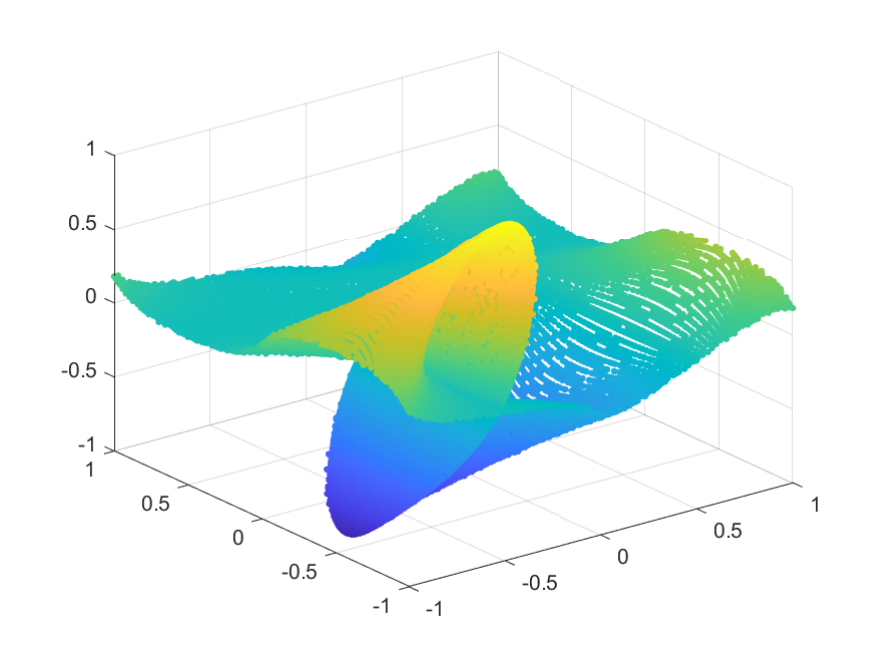}
    \includegraphics[width=0.20\textwidth]{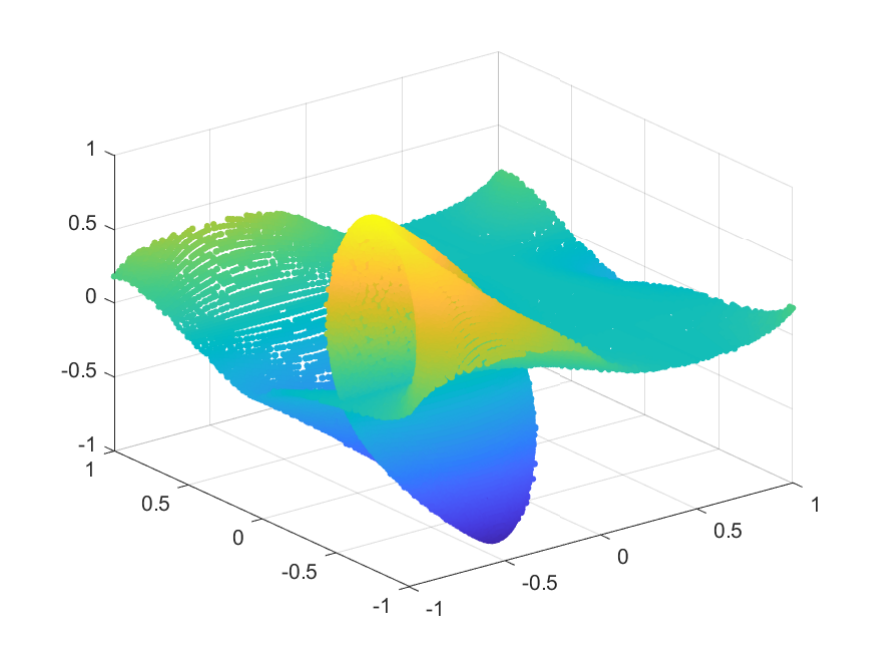}
    \includegraphics[width=0.20\textwidth]{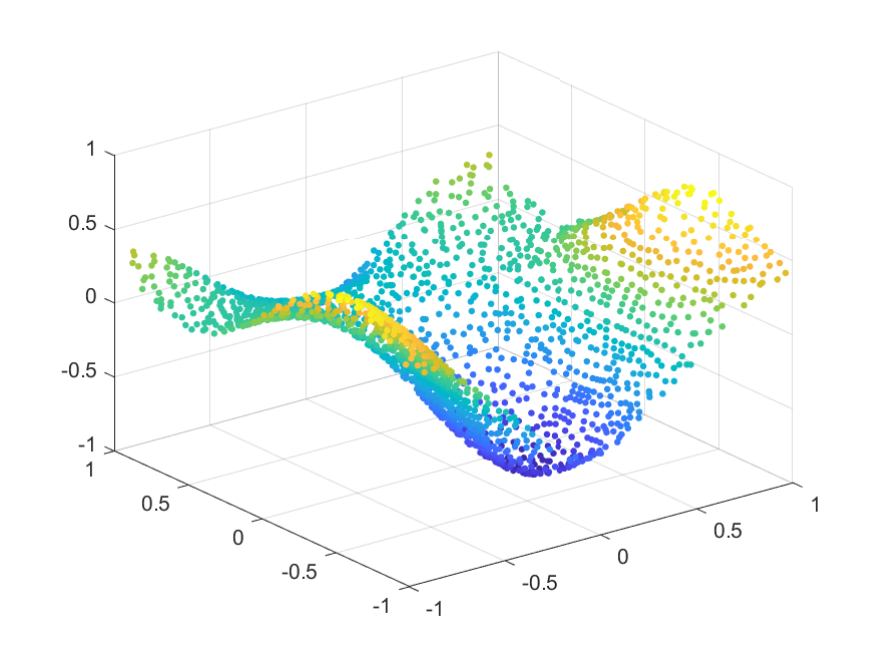}
    \includegraphics[width=0.20\textwidth]{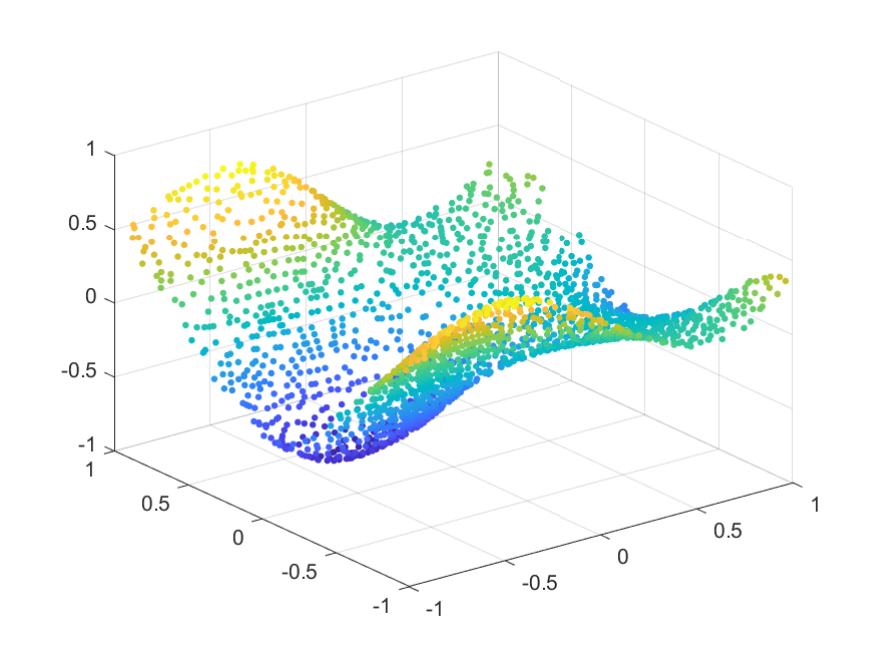}
    \includegraphics[width=0.20\textwidth]{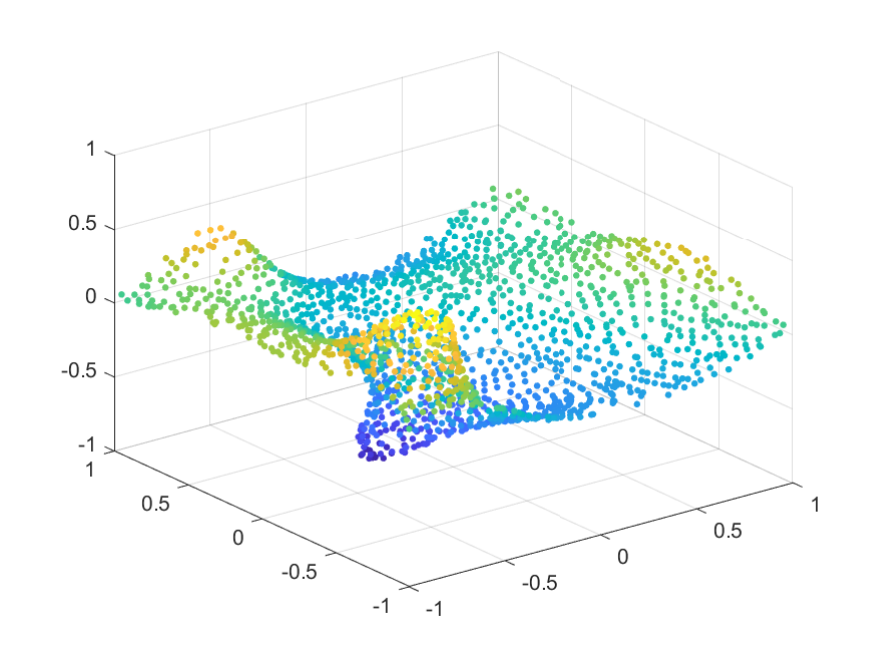}
    \includegraphics[width=0.20\textwidth]{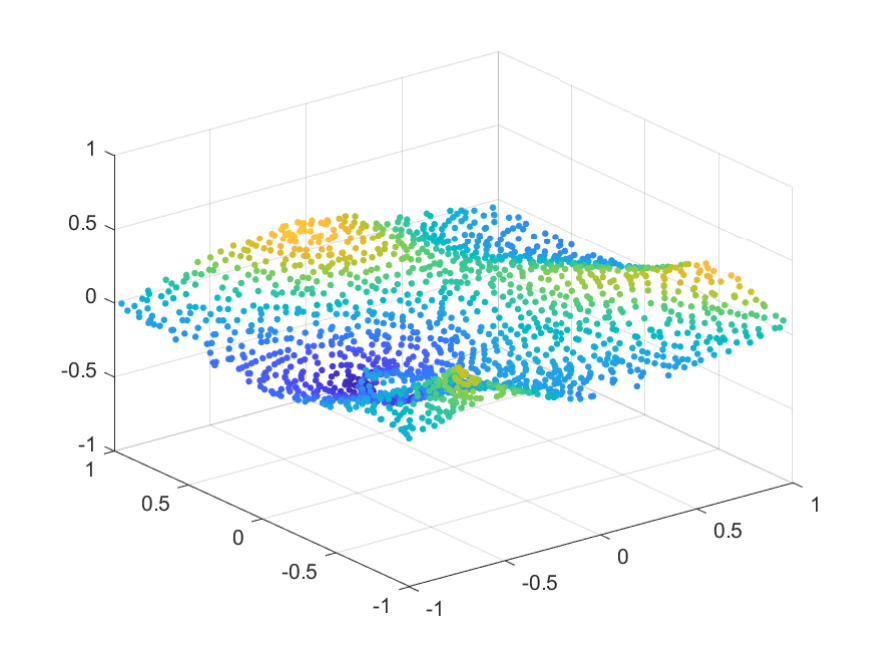}
    \includegraphics[width=0.20\textwidth]{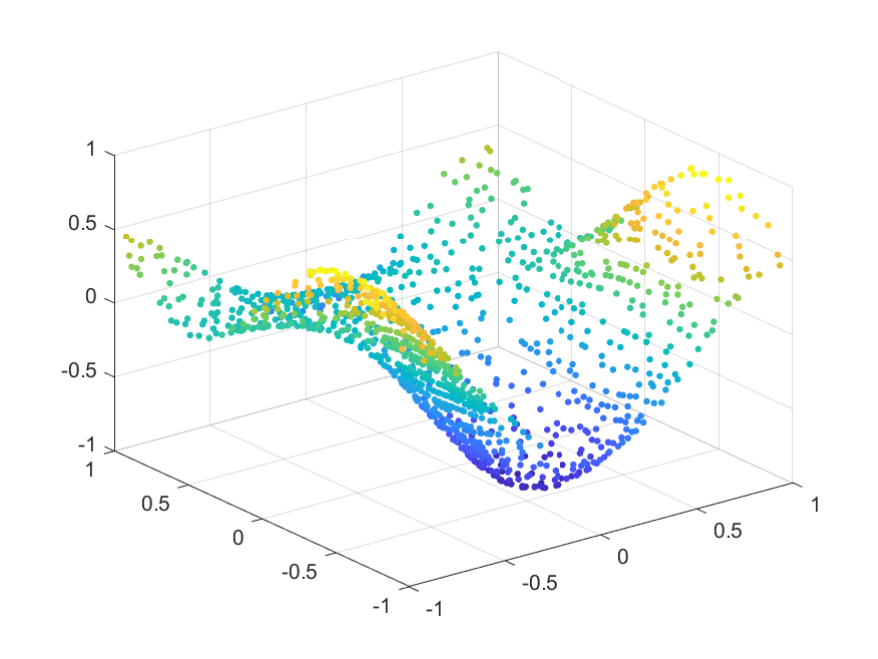}
    \includegraphics[width=0.20\textwidth]{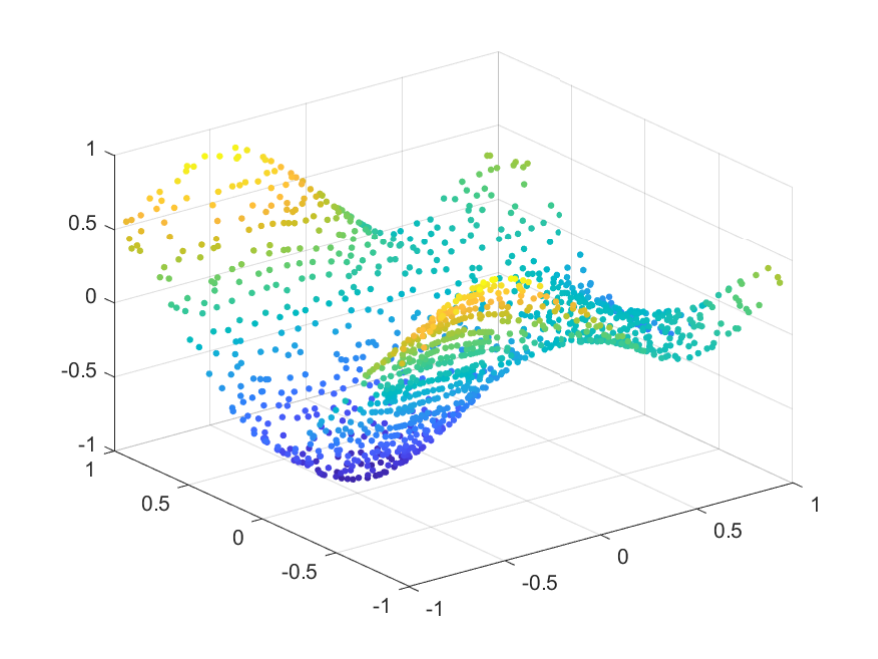}
    \includegraphics[width=0.20\textwidth]{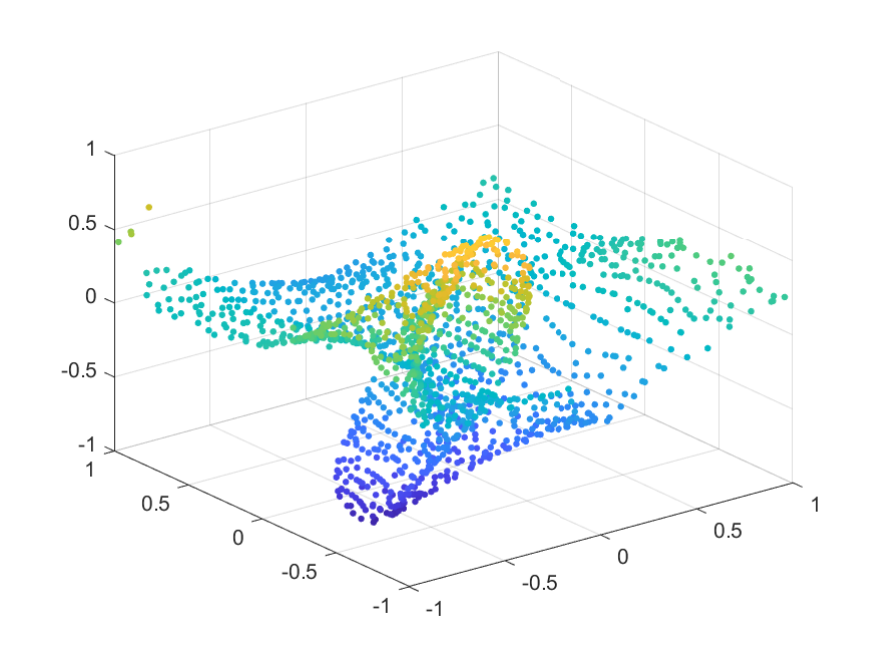}
    \includegraphics[width=0.20\textwidth]{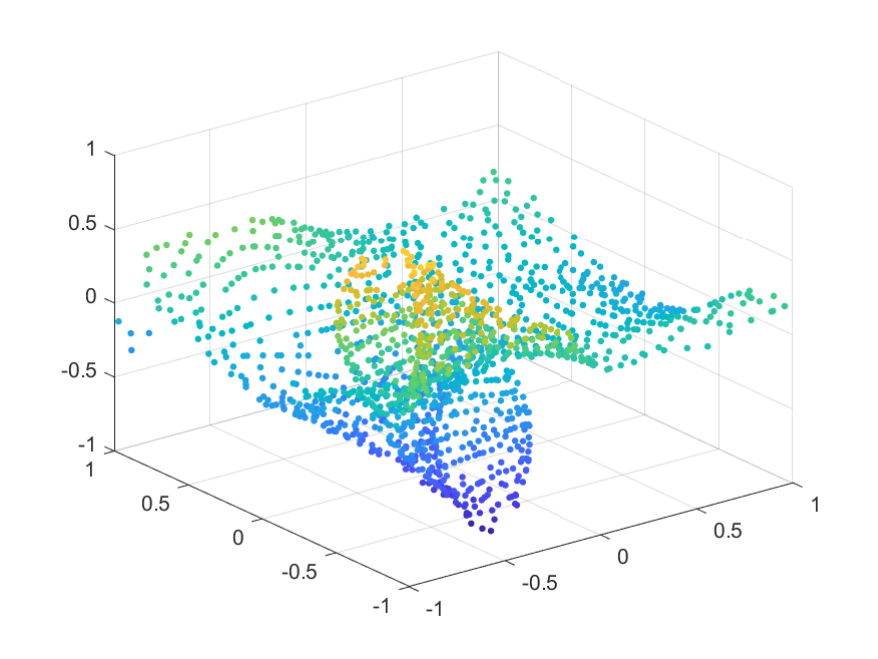}
    \caption{AG-RaNN method results for Example \ref{ex4}. The first two columns show the two components of $\bv(t,\bx)$ at $t=0$, and the last two columns show those at $t=1$. Rows (top to bottom) present the exact solution, results by normal collocation points, and results by collocation set $\Lambda_A$ ($N_A^I=(1286734,1277793)$, $N_A^B=(62291,62637)$).}
    \label{fig:ex4}
\end{figure}

In this example, the level-set functions are defined on a five-dimensional space:
$\phi_1(t,x_1,x_2,p_1,p_2)$ and $\phi_2(t,x_1,$ $x_2,p_1,p_2)$.
The intersection of the zero level sets
\begin{align*}
    \begin{cases}
        \phi_1(t,x_1,x_2,p_1,p_2)=0\\\phi_2(t,x_1,x_2,p_1,p_2)=0
    \end{cases}
\end{align*}
is a three-dimensional hypersurface in the five-dimensional space. Then we project it to $p_2$ and $p_1$, and get two hypersurfaces $p_1=v_1(t,x_1,x_2)$ and $p_2=v_2(t,x_1,x_2)$. Figure \ref{fig:ex4} only shows the surfaces when $t=0$ (the first two columns) and $t=1$ (the last two columns). This example involves a genuinely high-dimensional augmented phase space and does not rely on low-dimensional parameterization of the solution manifold.

\section{Summary}
\label{sec:summary}

This paper addresses computational challenges in recovering multivalued solution manifolds of nonlinear first-order PDEs, including Hamilton--Jacobi equations and scalar hyperbolic balance laws. Such solutions arise in geometric optics, seismic waves, semiclassical limit of quantum dynamics and high frequency limit of linear waves, and differ markedly from the viscosity solutions. The main computational challenges lie in that the solutions are no longer functions, and become union of multiple branches, after the formation of singularities. Our approach is based on a level-set formulation, which transforms the original nonlinear dynamics into a linear transport equation posed in an augmented phase space which unfolds the singularity and the evolution equations remain valid globally in time.  While this reformulation simplifies the governing equation, the increased dimensionality can become a computational bottleneck. To alleviate this difficulty, we employ randomized neural networks, whose convex least-squares training leads to robust optimization and enables efficient approximation in high dimensions.

Two ingredients are critical to the efficiency and accuracy of the proposed method. First, adaptive collocation concentrates sampling in a narrow tube around the zero level set, focusing computational effort on the region of interest. Second, a layer-wise growth strategy progressively enriches the randomized feature space, improving resolution and accuracy. For the resulting level-set equations, we establish a corresponding error analysis that supports the convergence of the RaNN approximation. Extensive numerical experiments confirm the effectiveness of the proposed method in capturing multivalued structures and discontinuities.

\bigskip
\noindent{\bf Acknowledgement}

Haoning Dang and Fei Wang acknowledge the support by the National Natural Science Foundation of China (Grant No. 92470115). 
Shi Jin acknowledges the support of the NSFC grant No. 12531016,  the Science and Technology Commission of Shanghai Municipality (STCSM) grant no. 23JC1402300, and the Fundamental Research Funds for the
Central Universities.

\end{document}